\theoremstyle{definition}
\newtheorem{definition}{Definition}
\newtheorem{example}[definition]{Example}
\theoremstyle{plain}
\newtheorem{lemma}[definition]{Lemma}
\newtheorem{theorem}[definition]{Theorem}
\newtheorem{corollary}[definition]{Corollary}
\newtheorem{conjecture}[definition]{Conjecture}
\begin{document}

\title[Cayley's hyperdeterminant]
{Cayley's hyperdeterminant: a combinatorial approach via representation theory}

\author[Bremner]{Murray R. Bremner}
\author[Bickis]{Mikelis G. Bickis}
\author[Soltanifar]{Mohsen Soltanifar}

\address{Department of Mathematics and Statistics, University of Saskatchewan,
106 Wiggins Road (McLean Hall), Saskatoon, Saskatchewan, Canada S7N 5E6}

\email{bremner@math.usask.ca}
\email{bickis@math.usask.ca}
\email{mohsen.soltanifar@usask.ca}

\begin{abstract}
Cayley's hyperdeterminant is a homogeneous polynomial of degree 4 in the 8 entries of a $2 \times 2 \times 2$ array.
It is the simplest (nonconstant) polynomial which is invariant under changes of basis in three directions. 
We use elementary facts about representations of the 3-dimensional simple Lie algebra $\mathfrak{sl}_2(\mathbb{C})$ 
to reduce the problem of finding the invariant polynomials for a $2 \times 2 \times 2$ array to a combinatorial
problem on the enumeration of $2 \times 2 \times 2$ arrays with non-negative integer entries. 
We then apply results from linear algebra to obtain a new proof that Cayley's hyperdeterminant generates all the invariants.
\end{abstract}

\maketitle


\section{Introduction}

In his famous 1845 paper on the theory of linear transformations,
which became the foundation of classical invariant theory, Cayley \cite{Cayley} introduced
the concept of the hyperdeterminant of a multidimensional array. He explicitly
calculated the hyperdeterminant for the simplest case, an array of size $2
\times 2 \times 2$, which can be represented in two dimensions by its
two frontal slices:
  \begin{equation}
  \label{arrayX}
  X
  =
  \left[
  \begin{array}{cc|cc}
  x_{000} & x_{010} & x_{001} & x_{011} \\
  x_{100} & x_{110} & x_{101} & x_{111}
  \end{array}
  \right].
  \end{equation}

\begin{definition} \label{CHdefinition}
\textbf{Cayley's hyperdeterminant} is the following homogeneous polynomial of degree 4
in the 8 entries $x_{ijk}$ of the $2 \times 2 \times 2$ array of
equation \eqref{arrayX}:
  \begin{align*}
  C
  &=
  x_{000}^2 x_{111}^2
  + x_{001}^2 x_{110}^2
  + x_{010}^2 x_{101}^2
  + x_{011}^2 x_{100}^2
  \\
  &\quad
  -
  2 \big(
  x_{000} x_{001} x_{110} x_{111}
  + x_{000} x_{010} x_{101} x_{111}
  + x_{000} x_{011} x_{100} x_{111}
  \\
  &\quad\quad\quad
  + x_{001} x_{010} x_{101} x_{110}
  + x_{001} x_{011} x_{100} x_{110}
  + x_{010} x_{011} x_{100} x_{101}
  \big)
  \\
  &\quad
  +
  4 \big(
  x_{000} x_{011} x_{101} x_{110}
  + x_{001} x_{010} x_{100} x_{111}
  \big).
  \end{align*}
\end{definition}

This polynomial has an interesting combinatorial-geometric interpretation.
The first four terms have coefficient 1, and the subscripts correspond to the vertices of diagonals of the cube
(configurations of dimension 1). 
The next six terms have coefficient $-2$, and the subscripts correspond to squares in the cube 
(configurations of dimension 2). 
The last two terms have coefficient 4, and the subscripts correspond to tetrahedra in the cube 
(configurations of dimension 3). 
These three configurations are illustrated by the dashed lines in Figure \ref{geometricconfigurations}.

\begin{figure}
$
\begin{xy}
( 0, 0)*+{\bullet}="000";
( 9, 4)*+{\circ}="002";
( 0,16)*+{\circ}="020";
(18, 0)*+{\circ}="200";
( 9,20)*+{\circ}="022";
(27, 4)*+{\circ}="202";
(18,16)*+{\circ}="220";
(27,20)*+{\bullet}="222";
{\ar@{-} "220";"020"};
{\ar@{-} "020";"000"};
{\ar@{-} "220";"200"};
{\ar@{-} "200";"000"};
{\ar@{-} "022";"020"};
{\ar@{-} "222";"220"};
{\ar@{-} "002";"000"};
{\ar@{-} "202";"200"};
{\ar@{-} "222";"022"};
{\ar@{-} "222";"202"};
{\ar@{-} "202";"002"};
{\ar@{-} "022";"002"};
{\ar@{--} "000";"222"};
(38, 0)*+{\bullet}="000";
(47, 4)*+{\bullet}="002";
(38,16)*+{\circ}="020";
(56, 0)*+{\circ}="200";
(47,20)*+{\circ}="022";
(65, 4)*+{\circ}="202";
(56,16)*+{\bullet}="220";
(65,20)*+{\bullet}="222";
{\ar@{-} "220";"020"};
{\ar@{-} "020";"000"};
{\ar@{-} "220";"200"};
{\ar@{-} "200";"000"};
{\ar@{-} "022";"020"};
{\ar@{--} "222";"220"};
{\ar@{--} "002";"000"};
{\ar@{-} "202";"200"};
{\ar@{-} "222";"022"};
{\ar@{-} "222";"202"};
{\ar@{-} "202";"002"};
{\ar@{-} "022";"002"};
{\ar@{--} "000";"220"};
{\ar@{--} "002";"222"};
( 76, 0)*+{\bullet}="000";
( 85, 4)*+{\circ}="002";
( 76,16)*+{\circ}="020";
( 94, 0)*+{\circ}="200";
( 85,20)*+{\bullet}="022";
(103, 4)*+{\bullet}="202";
( 94,16)*+{\bullet}="220";
(103,20)*+{\circ}="222";
{\ar@{-} "220";"020"};
{\ar@{-} "020";"000"};
{\ar@{-} "220";"200"};
{\ar@{-} "200";"000"};
{\ar@{-} "022";"020"};
{\ar@{-} "222";"220"};
{\ar@{-} "002";"000"};
{\ar@{-} "202";"200"};
{\ar@{-} "222";"022"};
{\ar@{-} "222";"202"};
{\ar@{-} "202";"002"};
{\ar@{-} "022";"002"};
{\ar@{--} "022";"220"};
{\ar@{--} "000";"202"};
{\ar@{--} "000";"220"};
{\ar@{--} "022";"202"};
{\ar@{--} "220";"202"};
{\ar@{--} "000";"022"}
\end{xy}
$
\caption{Geometric configurations in Cayley's hyperdeterminant}
\label{geometricconfigurations}
\end{figure}
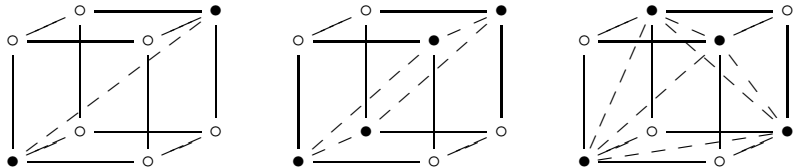

Cayley's hyperdeterminant $C$ is the simplest (nonconstant) polynomial in 
the entries of the $2 \times 2 \times 2$ array $X$ of equation
\eqref{arrayX} which is invariant under unimodular changes of basis along the
three directions. To make this
idea more precise, we regard $X$ as an element of the
tensor cube $\mathbb{C}^2 \otimes \mathbb{C}^2 \otimes \mathbb{C}^2$ of the
2-dimensional complex vector space $\mathbb{C}^2$. The group $SL_2(\mathbb{C})$ of $2 \times
2$ matrices of determinant 1 acts on $\mathbb{C}^2$ by matrix-vector
multiplication, and this gives a component-wise action of the direct product
$SL_2(\mathbb{C}) \times SL_2(\mathbb{C}) \times SL_2(\mathbb{C})$ on
$\mathbb{C}^2 \otimes \mathbb{C}^2 \otimes \mathbb{C}^2$.
This action
extends to the algebra of polynomials in the entries of $X$,
and $C$ is
the simplest polynomial which is fixed by every element of the
direct product. 

Ordinary determinants of square matrices can be characterized by a similar invariance property. 
Matrices $U \in SL_m(\mathbb{C})$ act on rectangular $m \times n$ matrices $A$ by left multiplication: 
$A \mapsto UA$. 
The First Fundamental Theorem of Classical Invariant Theory states that 
there exist nonconstant invariant polynomials in the entries of $A$ if and only if $m \le n$, 
and every invariant is a polynomial in the determinants of the $m \times m$ submatrices obtained 
by choosing $m$ columns of $A$; see Procesi \cite[\S 11.1.2]{Procesi}. 
If we combine the left action of $U \in SL_m(\mathbb{C})$ with the right action of $V \in
SL_n(\mathbb{C})$, so that $A \mapsto UAV$, then invariants exist for $SL_m(\mathbb{C}) \times SL_n(\mathbb{C})$ if and
only if $m = n$, and every invariant is a polynomial in $\det(A)$. 

We now summarize the results of this paper. In Section
\ref{sectionrepresentationtheory} we recall some elementary results in the
representation theory of the 3-dimensional simple Lie algebra
$\mathfrak{sl}_2(\mathbb{C})$. We explain how the 9-dimensional semisimple Lie algebra
  \[
  \mathfrak{sl}_2(\mathbb{C})^3
  =
  \mathfrak{sl}_2(\mathbb{C}) \oplus
  \mathfrak{sl}_2(\mathbb{C}) \oplus
  \mathfrak{sl}_2(\mathbb{C}),
  \]
acts on the 8-dimensional vector space
  \[
  M_{2,2,2}(\mathbb{C}) = \mathbb{C}^2 \otimes \mathbb{C}^2 \otimes \mathbb{C}^2,
  \]
the tensor cube of the natural representation of $\mathfrak{sl}_2(\mathbb{C})$.
We describe, using what are essentially the power and
product rules from elementary calculus, the action of
$\mathfrak{sl}_2(\mathbb{C})^3$ on the algebra of polynomials on
$M_{2,2,2}(\mathbb{C})$. The invariant polynomials are those which are
annihilated by all Lie algebra elements (equivalently, fixed by all Lie group
elements). For each degree $d$, the homogeneous polynomials form a
finite-dimensional representation of $\mathfrak{sl}_2(\mathbb{C})^3$, and a well-known theorem implies that this
representation is the direct sum of irreducible representations.
We express the invariant polynomials as the elements in the kernel of a
linear differential operator which represents the action of the Lie algebra on
homogeneous polynomials, and from this we represent the invariant
polynomials as the nullspace of a matrix. The domain of this
linear map has a monomial basis in bijection with the set of all $2
\times 2 \times 2$ arrays with non-negative integer entries summing to
$d$ and equal sums over the parallel $2 \times 2$ slices in
the three directions. This reduces the computation of invariants
to elementary combinatorics and linear algebra.

In Section \ref{sectionhyperdeterminant} we present explicit calculations for
degrees 2 and 4. In degree 2, the matrix has size $6 \times 4$ and rank 4, 
so there are no invariants. In
degree 4, the matrix has size $24 \times 12$ and rank 11; Cayley's
hyperdeterminant $C$ is a basis for the nullspace. Considering the powers of
$C$, it follows that the dimension of the space of invariants is $\ge 1$
in each degree $d$ which is a multiple of 4.

In Section \ref{sectiondimensionformulas} we compute the dimensions of certain weight spaces in the representation of
$\mathfrak{sl}_2(\mathbb{C})^3$ on the homogeneous polynomials of
degree $d$. This is equivalent to the enumeration of
$2 \times 2 \times 2$ arrays with non-negative integer entries and
constraints on the entry sums over the parallel $2 \times 2$ slices in
the three directions.

In Section \ref{sectioninclusionexclusion} we apply a result on subspaces, 
reminiscent of the inclusion-exclusion principle, to a commutative diagram of injective linear maps between
weight spaces in representations of $\mathfrak{sl}_2(\mathbb{C})^3$. This
provides a different proof that the space of invariant polynomials has dimension $\ge 1$ 
in each degree $d$ which is a multiple of 4. 
We then use the representation theory of $\mathfrak{sl}_2(\mathbb{C})^3$ to prove that 
the algebra of invariants is a polynomial algebra and is
generated by Cayley's hyperdeterminant in degree 4.
Hence there are no new invariants in higher degrees.

In Section \ref{sectiongeneralization} we consider invariant polynomials in the entries of an array of size
$n_1 \times n_2 \times \cdots \times n_k$ under the action of 
$SL_{n_1}(\mathbb{C}) \times SL_{n_2}(\mathbb{C}) \times \cdots \times SL_{n_k}(\mathbb{C})$. 
The corresponding combinatorial objects are $k$-dimensional arrays with non-negative integer entries and
equal sums over the parallel slices in the $k$ directions.

In Section \ref{sectionconclusion} we briefly summarize recent applications of
Cayley's hyperdeterminant and provide some suggestions for further research.


\section{Representations of $\mathfrak{sl}_2(\mathbb{C})$} \label{sectionrepresentationtheory}

In this section we recall some elementary results in the representation theory of Lie algebras. 
Standard references are Jacobson \cite{Jacobson},
Humphreys \cite{Humphreys}, de Graaf \cite{deGraaf}, Erdmann and Wildon \cite{ErdmannWildon}.
For an introduction to Lie theory, by which is meant the relation between Lie groups and Lie algebras, 
see Stillwell \cite{Stillwell}.
For the connection with classical invariant theory, see Procesi \cite{Procesi}.

The 3-dimensional simple Lie algebra $\mathfrak{sl}_2(\mathbb{C})$ consists of
the $2 \times 2$ matrices of trace 0 over $\mathbb{C}$ with the Lie bracket operation
$[ A, B ] = AB - BA$.
This operation satisfies anticommutativity and the Jacobi identity:
  \[
  [A,A] \equiv 0,
  \qquad
  [[A,B],C] + [[B,C],A] + [[C,A],B] \equiv 0.
  \]
The standard basis of $\mathfrak{sl}_2(\mathbb{C})$ consists of these three matrices:
  \begin{equation} \label{sl2basis}
  H = \left[ \begin{array}{rr} 1 & 0 \\ 0 & -1 \end{array} \right],
  \qquad
  E = \left[ \begin{array}{rr} 0 & 1 \\ 0 & 0 \end{array} \right],
  \qquad
  F = \left[ \begin{array}{rr} 0 & 0 \\ 1 & 0 \end{array} \right].
  \end{equation}
In its natural representation, $\mathfrak{sl}_2(\mathbb{C})$ acts by matrix-vector multiplication
on the two-dimensional vector space $\mathbb{C}^2$ with this standard basis:
  \[
  x_0 = \left[ \begin{array}{r} 1 \\ 0 \end{array} \right],
  \qquad
  x_1 = \left[ \begin{array}{r} 0 \\ 1 \end{array} \right].
  \]

\begin{lemma}
The action of $\mathfrak{sl}_2(\mathbb{C})$ on $\mathbb{C}^2$ is given by the following equations:
  \[
  H \cdot x_0 = x_0,
  \;\;
  H \cdot x_1 = -x_1.
  \;\;
  E \cdot x_0 = 0,
  \;\;
  E \cdot x_1 = x_0,
  \;\;
  F \cdot x_0 = x_1,
  \;\;
  F \cdot x_1 = 0.
  \]
In particular, $x_0$ and $x_1$ are eigenvectors for $H$ and $H \cdot x_i = (-1)^i x_i$ $(i = 0, 1)$.
\end{lemma}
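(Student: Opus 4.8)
The plan is to verify the six stated equations directly by computing each matrix-vector product, since the action is literally given by matrix multiplication. This is a routine but complete check: apply each of the three basis matrices $H, E, F$ from \eqref{sl2basis} to each of the two basis vectors $x_0, x_1$, obtaining six products in total. For instance, $H \cdot x_0 = \bigl[\begin{smallmatrix} 1 & 0 \\ 0 & -1 \end{smallmatrix}\bigr]\bigl[\begin{smallmatrix} 1 \\ 0 \end{smallmatrix}\bigr] = \bigl[\begin{smallmatrix} 1 \\ 0 \end{smallmatrix}\bigr] = x_0$, and similarly $E \cdot x_1 = \bigl[\begin{smallmatrix} 0 & 1 \\ 0 & 0 \end{smallmatrix}\bigr]\bigl[\begin{smallmatrix} 0 \\ 1 \end{smallmatrix}\bigr] = \bigl[\begin{smallmatrix} 1 \\ 0 \end{smallmatrix}\bigr] = x_0$. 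The remaining four products are computed the same way; each is a single $2 \times 2$ times $2 \times 1$ multiplication yielding one of $x_0$, $x_1$, or the zero vector.

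For the final sentence, I would observe that the two equations $H \cdot x_0 = x_0$ and $H \cdot x_1 = -x_1$ say precisely that $x_0$ and $x_1$ are eigenvectors of $H$ with eigenvalues $+1$ and $-1$ respectively. These two eigenvalues are exactly $(-1)^0 = 1$ and $(-1)^1 = -1$, so the uniform statement $H \cdot x_i = (-1)^i x_i$ for $i = 0, 1$ follows immediately by inspection of the two cases.

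There is no real obstacle here: the lemma is a direct consequence of the definitions, and the entire proof amounts to recording six matrix products together with the trivial observation that $\{+1, -1\} = \{(-1)^i : i = 0,1\}$. The only thing to be careful about is consistency of sign and index conventions, namely that $x_0$ (the first standard basis vector) is the $+1$ eigenvector and $x_1$ the $-1$ eigenvector, matching the diagonal entries of $H$ in the order given in \eqref{sl2basis}; this convention will be used repeatedly in the sequel, so it is worth stating the eigenvalue relation explicitly as the lemma does.
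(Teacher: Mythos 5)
Your proof is correct and matches the paper's intent: the paper states this lemma without proof precisely because it is the routine verification by six matrix-vector products that you carry out, and your check of the representative cases (with the eigenvalue observation at the end) is exactly the implicit argument.
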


\begin{lemma} \label{differentialoperators}
If we regard $x_0$ and $x_1$ as indeterminates,
then we can express the action of $\mathfrak{sl}_2(\mathbb{C})$ on $\mathbb{C}^2$
by partial differential operators as follows:
  \[
  H = x_0 \frac{\partial}{\partial x_0} - x_1 \frac{\partial}{\partial x_1},
  \qquad
  E = x_0 \frac{\partial}{\partial x_1},
  \qquad
  F = x_1 \frac{\partial}{\partial x_0}.
  \]
\end{lemma}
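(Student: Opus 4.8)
The plan is to verify the three operator identities directly, reducing to a comparison on the basis $\{x_0,x_1\}$. Since both sides are linear maps and $x_0,x_1$ span $\mathbb{C}^2$, it suffices to check equality on these two vectors. I would identify $\mathbb{C}^2$ with the space of linear forms inside $\mathbb{C}[x_0,x_1]$, so that each basis vector $x_i$ is simultaneously the degree-one monomial $x_i$. Each operator on the right-hand side is first order and sends a linear form to a linear form, so it restricts to an endomorphism of $\mathbb{C}^2$, and the claim is exactly that this restriction equals the matrix action recorded in the preceding lemma.

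The cleanest route is to pass through matrix units. Writing $E_{ab}$ for the $2\times 2$ matrix with a $1$ in position $(a,b)$ and zeros elsewhere (rows and columns indexed by $0,1$), the action on column vectors is $E_{ab}\cdot x_c=\delta_{bc}\,x_a$, and the differential operator $x_a\,\partial/\partial x_b$ does exactly the same on linear forms, since $(x_a\,\partial/\partial x_b)(x_c)=\delta_{bc}\,x_a$. Thus $E_{ab}\mapsto x_a\,\partial/\partial x_b$ reproduces the matrix action. Substituting the decompositions $H=E_{00}-E_{11}$, $E=E_{01}$, and $F=E_{10}$ then yields precisely the three operators in the statement. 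Alternatively, one may skip the matrix units and simply apply each operator to each indeterminate, obtaining $H(x_0)=x_0$, $H(x_1)=-x_1$, $E(x_0)=0$, $E(x_1)=x_0$, $F(x_0)=x_1$, $F(x_1)=0$, which match the six equations of the preceding lemma.

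There is no genuine obstacle here; the content is a routine verification. The only point that deserves care is the indexing convention: because matrix units act on column vectors as $E_{ab}\cdot x_c=\delta_{bc}\,x_a$, one must avoid an inadvertent transpose. It is also worth emphasizing the conceptual subtlety that, in contrast to the contragredient (negative-transpose) action one usually obtains when a Lie algebra acts on functions, the ``raising/lowering'' operators $x_a\,\partial/\partial x_b$ reproduce the action itself rather than its dual, precisely because the indeterminates $x_i$ are identified with the basis vectors of $\mathbb{C}^2$.
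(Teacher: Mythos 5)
Your verification is correct: the paper gives no proof for this lemma at all, treating it as an immediate consequence of the preceding lemma's six formulas for $H, E, F$ acting on $x_0, x_1$. Your direct check on the basis $\{x_0, x_1\}$ (and the equivalent matrix-unit correspondence $E_{ab} \mapsto x_a\,\partial/\partial x_b$) is exactly the routine computation the paper leaves implicit, and your remark about the indexing convention and the identification of indeterminates with basis vectors rather than dual vectors is a point the paper itself flags in its definition of the $x_{ijk}$.
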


\begin{definition}
We identify a $2 \times 2 \times 2$ array $X = ( x_{ijk} )$ with an element of the tensor cube
$M_{2,2,2}(\mathbb{C}) = \mathbb{C}^2 \otimes \mathbb{C}^2 \otimes \mathbb{C}^2$.
We identify the entries with simple tensors,
$x_{ijk} = x_i \otimes x_j \otimes x_k$ $(i, j, k = 0, 1)$.
(Strictly speaking, since we regard $x_{ijk}$ as a coordinate function on $M_{2,2,2}(\mathbb{C})$,
we should use dual basis vectors and write
$x_{ijk} = x^\ast_i \otimes x^\ast_j \otimes x^\ast_k$,
but this distinction will not be important for us.)
\end{definition}

\begin{definition}
The Lie group $SL_2(\mathbb{C}) \times SL_2(\mathbb{C}) \times SL_2(\mathbb{C})$ acts on
the vector space $M_{2,2,2}(\mathbb{C})$; the action is defined on simple tensors and extended linearly:
  \[
  ( X, Y, Z ) \cdot ( u \otimes v \otimes w )
  =
  ( X \cdot u ) \otimes ( Y \cdot v ) \otimes ( Z \cdot w ).
  \]
As usual, we linearize the group action by considering the action of the Lie algebra
$\mathfrak{sl}_2(\mathbb{C})^3 =
\mathfrak{sl}_2(\mathbb{C}) \oplus \mathfrak{sl}_2(\mathbb{C}) \oplus \mathfrak{sl}_2(\mathbb{C})$
on $M_{2,2,2}(\mathbb{C})$ defined by this equation:
  \[
  ( A, B, C ) \cdot ( u \otimes v \otimes w )
  =
  ( A \cdot u ) \otimes v \otimes w
  +
  u \otimes ( B \cdot v ) \otimes w
  +
  u \otimes v \otimes ( C \cdot w ).
  \]
\end{definition}

\begin{lemma}
The 8-dimensional vector space $M_{2,2,2}(\mathbb{C})$ is an irreducible representation of
the 9-dimensional semisimple Lie algebra $\mathfrak{sl}_2(\mathbb{C})^3$.
\end{lemma}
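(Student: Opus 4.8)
The plan is to give a direct, elementary argument using the weight-space decomposition, in keeping with the spirit of the paper, rather than invoking the general theorem that an irreducible representation of a direct sum of semisimple Lie algebras is an outer tensor product of irreducibles of the summands. First I would identify the action of the Cartan subalgebra, which is spanned by $(H,0,0)$, $(0,H,0)$, and $(0,0,H)$. By the lemma giving the action on $\mathbb{C}^2$ we have $H \cdot x_i = (-1)^i x_i$, and by the definition of the action on the tensor cube each basis monomial $x_{ijk} = x_i \otimes x_j \otimes x_k$ is a simultaneous eigenvector: $(H,0,0) \cdot x_{ijk} = (-1)^i x_{ijk}$, with analogous formulas for the other two copies. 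Hence $x_{ijk}$ has weight $((-1)^i,(-1)^j,(-1)^k)$, and since the eight sign vectors in $\{1,-1\}^3$ are pairwise distinct, the eight weight spaces are one-dimensional.

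Next, let $W \subseteq M_{2,2,2}(\mathbb{C})$ be a nonzero subrepresentation. Because $W$ is stable under the three commuting, simultaneously diagonalizable Cartan operators, it is a direct sum of weight spaces; as each weight space is spanned by a single $x_{ijk}$, there is a nonempty set $S$ of index triples with $W = \mathrm{span}\{\, x_{ijk} : (i,j,k) \in S \,\}$. The key step is then to show that the raising and lowering operators force $S$ to be all of $\{0,1\}^3$. From $F \cdot x_0 = x_1$, $E \cdot x_1 = x_0$, and the tensor-action rule, the element $(F,0,0)$ sends $x_{0jk}$ to $x_{1jk}$ while $(E,0,0)$ sends $x_{1jk}$ to $x_{0jk}$, and likewise for the second and third copies of $\mathfrak{sl}_2(\mathbb{C})$. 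Thus each generator flips exactly one index, and any two basis monomials are connected by a sequence of single-index flips; starting from any $x_{ijk} \in W$ we reach every basis monomial, so $W = M_{2,2,2}(\mathbb{C})$ and the representation is irreducible.

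The argument presents no serious obstacle. The only point requiring care is the reduction of an arbitrary subrepresentation to a union of weight spaces: this is precisely where the distinctness of the eight weights is used, guaranteeing that the simultaneous eigenspaces of the Cartan are one-dimensional and hence that any Cartan-stable subspace is spanned by a subset of the weight vectors. Everything else is a short verification using the explicit action of $H$, $E$, and $F$ computed in the preceding lemmas, together with the bookkeeping observation that the single-index flips generate all transitions on $\{0,1\}^3$.
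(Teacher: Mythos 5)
Your argument is correct, but it takes a genuinely different route from the paper. The paper disposes of this lemma in one line by citing a general structural fact (Proposition 1.1 of Neher, Savage and Senesi): a representation of a semisimple Lie algebra is irreducible if and only if it is an (outer) tensor product of irreducible representations of the simple summands, so $\mathbb{C}^2 \otimes \mathbb{C}^2 \otimes \mathbb{C}^2$ is irreducible because each factor $\mathbb{C}^2$ is. Your proof instead works entirely from the explicit formulas already recorded in the paper: the eight monomials $x_{ijk}$ are simultaneous eigenvectors of $H_1, H_2, H_3$ with the pairwise distinct weights $((-1)^i,(-1)^j,(-1)^k)$, so every weight space is one-dimensional; a nonzero subrepresentation, being stable under the commuting diagonalizable Cartan operators, must therefore be spanned by a subset of the $x_{ijk}$; and the operators $E_\ell, F_\ell$ flip a single index with nonzero coefficient, so single-index flips connect all of $\{0,1\}^3$ and force the subset to be everything. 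Both arguments are sound. The citation is shorter and transfers verbatim to the general $n_1 \times \cdots \times n_k$ setting of Section \ref{sectiongeneralization}; your argument has the virtue of being self-contained and elementary, in keeping with the paper's combinatorial spirit, and in fact it too generalizes, since the weights of the natural representation of $\mathfrak{sl}_n(\mathbb{C})$ are distinct and the simple root vectors connect consecutive indices. The one step worth spelling out if you wrote this up is the reduction of a Cartan-stable subspace to a sum of weight spaces (the minimal polynomial of the restricted operator divides that of the full operator, hence has distinct roots), which you correctly identify as the place where one-dimensionality of the weight spaces is used.
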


\begin{proof}
A representation
of a semisimple Lie algebra is irreducible if and only if it is isomorphic to
the tensor product of irreducible representations of its simple summands.  
See Proposition 1.1 of Neher, Savage and Senesi
\cite{NSS}.
\end{proof}

\begin{definition}
We write $H_\ell, E_\ell, F_\ell$ $(\ell = 1, 2, 3)$ for the standard basis
of the $\ell$-th copy of $\mathfrak{sl}_2(\mathbb{C})$ in $\mathfrak{sl}_2(\mathbb{C})^3$;
see equation \eqref{sl2basis}.
\end{definition}

\begin{lemma}
The basis of $\mathfrak{sl}_2(\mathbb{C})^3$
acts on the basis of $M_{2,2,2}(\mathbb{C})$ as follows:
  \begin{alignat*}{3}
  H_1 \cdot x_{0jk} &= x_{0jk},
  &\qquad
  H_2 \cdot x_{i0k} &= x_{i0k},
  &\qquad
  H_3 \cdot x_{ij0} &= x_{ij0},
  \\
  H_1 \cdot x_{1jk} &= -x_{1jk},
  &\qquad
  H_2 \cdot x_{i1k} &= -x_{i1k},
  &\qquad
  H_3 \cdot x_{ij1} &= -x_{ij1},
  \\
  E_1 \cdot x_{0jk} &= 0,
  &\qquad
  E_2 \cdot x_{i0k} &= 0,
  &\qquad
  E_3 \cdot x_{ij0} &= 0,
  \\
  E_1 \cdot x_{1jk} &= x_{0jk},
  &\qquad
  E_2 \cdot x_{i1k} &= x_{i0k},
  &\qquad
  E_3 \cdot x_{ij1} &= x_{ij0},
  \\
  F_1 \cdot x_{0jk} &= x_{1jk},
  &\qquad
  F_2 \cdot x_{i0k} &= x_{i1k},
  &\qquad
  F_3 \cdot x_{ij0} &= x_{ij1},
  \\
  F_1 \cdot x_{1jk} &= 0,
  &\qquad
  F_2 \cdot x_{i1k} &= 0,
  &\qquad
  F_3 \cdot x_{ij1} &= 0.
  \end{alignat*}
\end{lemma}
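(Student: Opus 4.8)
The plan is to reduce each of the eighteen displayed equations to a single application of the preceding lemma on the action of $\mathfrak{sl}_2(\mathbb{C})$ on $\mathbb{C}^2$, combined with the derivation formula defining the action of $\mathfrak{sl}_2(\mathbb{C})^3$. The first step is to record that each standard basis element lies in exactly one summand: writing elements of $\mathfrak{sl}_2(\mathbb{C})^3$ as triples, we have $H_1 = (H,0,0)$, $E_1 = (E,0,0)$, $F_1 = (F,0,0)$, and likewise $H_2 = (0,H,0)$, $E_2 = (0,E,0)$, $F_2 = (0,F,0)$, $H_3 = (0,0,H)$, $E_3 = (0,0,E)$, $F_3 = (0,0,F)$.

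The second step is to substitute these triples into the derivation formula. For $H_1 = (H,0,0)$ acting on $x_{ijk} = x_i \otimes x_j \otimes x_k$, the three terms of the formula are
  \[
  H_1 \cdot x_{ijk} = (H \cdot x_i) \otimes x_j \otimes x_k + x_i \otimes (0 \cdot x_j) \otimes x_k + x_i \otimes x_j \otimes (0 \cdot x_k),
  \]
which reduces to $(H \cdot x_i) \otimes x_j \otimes x_k$ since $0 \cdot x_j = 0 \cdot x_k = 0$. Inserting $H \cdot x_0 = x_0$ and $H \cdot x_1 = -x_1$ yields the two $H_1$ equations, and the same collapse for $E_1$ and $F_1$, combined with $E \cdot x_0 = 0$, $E \cdot x_1 = x_0$, $F \cdot x_0 = x_1$, and $F \cdot x_1 = 0$, produces the remaining equations in the first direction.

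For the second and third directions the computation is identical after permuting the tensor slots: an element of the form $(0,B,0)$ leaves only the middle factor active and $(0,0,C)$ only the last factor, so the middle and right columns follow by the same substitutions. I expect no genuine obstacle here; the content is a finite, mechanical verification, and the only thing to track carefully is that in each case exactly one of the three derivation terms survives, namely the one corresponding to the summand containing the basis element.
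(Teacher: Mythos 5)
Your proposal is correct and follows essentially the same route as the paper: apply the derivation formula for the component-wise action of $\mathfrak{sl}_2(\mathbb{C})^3$ on simple tensors, observe that only the term for the active summand survives, and read off the result from the action of $H$, $E$, $F$ on $x_0$, $x_1$. You are slightly more explicit than the paper in justifying why the other two terms vanish, but the argument is the same.
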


\begin{proof}
This is a straightforward calculation.  For example, for $\ell = 1$ we have
  \begin{align*}
  H_1 \cdot x_{0jk}
  &=
  H_1 \cdot ( x_0 \otimes x_j \otimes x_k )
  =
  ( H_1 \cdot x_0 ) \otimes x_j \otimes x_k
  =
  x_{0jk},
  \\
  H_1 \cdot x_{1jk}
  &=
  H_1 \cdot ( x_1 \otimes x_j \otimes x_k )
  =
  ( H_1 \cdot x_1 ) \otimes x_j \otimes x_k
  =
  - x_{1jk},
  \\
  E_1 \cdot x_{0jk}
  &=
  E_1 \cdot ( x_0 \otimes x_j \otimes x_k )
  =
  ( E_1 \cdot x_0 ) \otimes x_j \otimes x_k
  =
  0,
  \\
  E_1 \cdot x_{1jk}
  &=
  E_1 \cdot ( x_1 \otimes x_j \otimes x_k )
  =
  ( E_1 \cdot x_1 ) \otimes x_j \otimes x_k
  =
  x_{0jk},
  \\
  F_1 \cdot x_{0jk}
  &=
  F_1 \cdot ( x_0 \otimes x_j \otimes x_k )
  =
  ( F_1 \cdot x_0 ) \otimes x_j \otimes x_k
  =
  x_{1jk},
  \\
  F_1 \cdot x_{1jk}
  &=
  F_1 \cdot ( x_1 \otimes x_j \otimes x_k )
  =
  ( F_1 \cdot x_1 ) \otimes x_j \otimes x_k
  =
  0.
  \end{align*}
The other cases are similar.
\end{proof}

\begin{definition}
We consider the polynomial algebra on $M_{2,2,2}(\mathbb{C})$:
  \[
  P
  =
  \mathbb{C}[ x_{000}, x_{010}, x_{100}, x_{110}, x_{001}, x_{011}, x_{101}, x_{111} ].
  \]
\end{definition}

\begin{lemma}
A basis of $P$ over $\mathbb{C}$ consists of the monomials,
  \[
  \prod_{i,j,k=0,1} x_{ijk}^{e_{ijk}}
  =
  x_{000}^{e_{000}}
  x_{001}^{e_{001}}
  x_{010}^{e_{010}}
  x_{011}^{e_{011}}
  x_{100}^{e_{100}}
  x_{101}^{e_{101}}
  x_{110}^{e_{110}}
  x_{111}^{e_{111}},
  \]
where the exponents $e_{ijk}$ are arbitrary non-negative integers.
\end{lemma}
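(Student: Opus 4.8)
The plan is to verify the two defining properties of a vector-space basis: that the displayed monomials span $P$ and that they are linearly independent over $\mathbb{C}$. Both are standard facts about polynomial algebras, so the proof really amounts to unwinding the definition of $P$ as the free commutative $\mathbb{C}$-algebra on the eight indeterminates $x_{ijk}$.

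First, spanning. By construction every element of $P$ is a finite $\mathbb{C}$-linear combination of finite products of the generators $x_{ijk}$. Using the distributive law to expand and commutativity to sort the factors, each such product collapses to a single monomial $\prod_{i,j,k} x_{ijk}^{e_{ijk}}$ in the displayed standard form. Hence the listed monomials span $P$.

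Second, and this is the only part with any content, linear independence. I would argue by the differentiation trick, which fits the differential-operator viewpoint already introduced in Lemma \ref{differentialoperators}. Suppose a finite relation $\sum_{E} c_E \prod_{i,j,k} x_{ijk}^{e_{ijk}} = 0$ holds, where $E = (e_{ijk})$ ranges over a finite set of exponent arrays. Fix one such array $E' = (e'_{ijk})$ and apply the constant-coefficient operator $\prod_{i,j,k} \partial^{e'_{ijk}} / \partial x_{ijk}^{e'_{ijk}}$ to both sides, then evaluate at the origin. Every monomial except the one indexed by $E'$ either acquires a positive power of some variable (and so vanishes at $0$) or is differentiated to $0$; the surviving term contributes $c_{E'} \prod_{i,j,k} e'_{ijk}!$. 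Since $\mathbb{C}$ has characteristic $0$ the factorials are nonzero, forcing $c_{E'} = 0$. As $E'$ was arbitrary, all coefficients vanish.

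The step that requires the most care is the bookkeeping in the differentiation argument, namely confirming that the operator indexed by $E'$ annihilates every monomial of a different exponent array after evaluating at $0$; but this is routine and uses nothing beyond the power rule. Alternatively, one may prove linear independence by induction on the number of indeterminates, invoking the one-variable base case that $\{1, x, x^2, \dots\}$ is a $\mathbb{C}$-basis of $\mathbb{C}[x]$, which holds by the very construction of the polynomial ring. I expect no genuine obstacle along either route.
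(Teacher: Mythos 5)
Your argument is correct. The paper itself offers no proof of this lemma: it is one of several background facts in Section \ref{sectionrepresentationtheory} stated without justification, since $P$ is by construction the free commutative $\mathbb{C}$-algebra on the eight indeterminates $x_{ijk}$, for which the monomial basis is essentially definitional. Your spanning argument is exactly the definitional unwinding one would expect, and your linear-independence argument via applying the operator $\prod_{i,j,k}\partial^{e'_{ijk}}/\partial x_{ijk}^{e'_{ijk}}$ and evaluating at the origin is a valid characteristic-zero verification (the bookkeeping you flag does check out: any exponent array $E\neq E'$ has some $e_{ijk}<e'_{ijk}$, killing the term outright, or some $e_{ijk}>e'_{ijk}$, leaving a factor that vanishes at $0$). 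The induction on the number of indeterminates you mention as an alternative is equally standard. In short, you have supplied a complete proof where the paper supplies none; nothing in your write-up conflicts with the paper's setup, and the differentiation route even harmonizes with the differential-operator viewpoint of Lemma \ref{differentialoperators}.
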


\begin{definition}
The \textbf{degree of a monomial} is the sum of its exponents:
  \[
  d = \sum_{i,j,k=0,1} e_{ijk}.
  \]
We write $P_d$ for the homogeneous subspace of $P$ spanned by the monomials of degree $d$.
We identify $P_1$ with $M_{2,2,2}(\mathbb{C})$, so that a basis of $P_1$ consists of the monomials of degree 1, 
namely
$x_{000}$, $x_{001}$, $x_{010}$, $x_{011}$, $x_{100}$, $x_{101}$, $x_{110}$, $x_{111}$.
\end{definition}

\begin{lemma}
There is a bijection between the monomials of degree $d$ and
the $2 \times 2 \times 2$ arrays $E = (e_{ijk})$ of non-negative integers summing to $d$.
\end{lemma}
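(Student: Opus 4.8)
The plan is to exhibit explicit maps in both directions and check that they are mutually inverse, so that the claimed correspondence is really just a matter of reading off exponents. In one direction, I would send a monomial to its array of exponents: given a monomial
  \[
  m = \prod_{i,j,k=0,1} x_{ijk}^{e_{ijk}}
  \]
of degree $d$, define $\Phi(m) = E$, where $E = (e_{ijk})$ is the $2 \times 2 \times 2$ array whose $(i,j,k)$ entry is the exponent of $x_{ijk}$ in $m$. In the other direction, given an array $E = (e_{ijk})$ of non-negative integers, define $\Psi(E)$ to be the monomial $\prod_{i,j,k} x_{ijk}^{e_{ijk}}$. Both maps are visibly well-defined once we know that a monomial is uniquely determined by its exponents.

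Next I would verify that the two conditions match up. By the definition of the degree of a monomial, the degree of $m$ equals $\sum_{i,j,k} e_{ijk}$, which is exactly the sum of the entries of the array $E = \Phi(m)$. Hence $\Phi$ carries a monomial of degree $d$ to an array whose entries sum to $d$, and conversely $\Psi$ carries an array with entry sum $d$ to a monomial of degree $d$. Therefore the two maps restrict to maps between the monomials of degree $d$ and the arrays of non-negative integers summing to $d$.

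Finally I would check that $\Phi$ and $\Psi$ are inverse to each other. The composite $\Phi \circ \Psi$ takes an array $E$, forms the monomial $\prod x_{ijk}^{e_{ijk}}$, and then reads off its exponents, returning $E$; the composite $\Psi \circ \Phi$ takes a monomial $m$, extracts its exponent array, and reconstructs the same monomial. Both composites are the identity, so $\Phi$ is a bijection.

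The only substantive point underlying this argument is that distinct exponent arrays give distinct monomials, i.e.\ that the monomials are linearly independent in $P$ and in particular pairwise distinct; this is precisely the content of the preceding lemma stating that the monomials form a basis of $P$. Apart from invoking that fact, there is no real obstacle here: the correspondence is essentially definitional, and its value lies not in the difficulty of the proof but in reframing the enumeration and the later Lie-algebra computations in the combinatorial language of integer arrays.
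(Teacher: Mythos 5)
Your proof is correct, and it matches the paper's intent exactly: the paper states this lemma without proof precisely because the correspondence is definitional, amounting to reading off the exponent array, which is what you make explicit. Your observation that injectivity rests on the monomials being a basis of $P$ (the preceding lemma) is the right justification for the one non-trivial point.
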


\begin{lemma}
The polynomial algebra $P$ is graded by the degree:
  \[
  S = \bigoplus_{d \ge 0} P_d,
  \qquad
  P_d P_e \subseteq P_{d+e}.
  \]
\end{lemma}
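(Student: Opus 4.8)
The plan is to verify the two defining properties of a graded algebra directly from the monomial basis established above. First I would establish the direct sum decomposition $P = \bigoplus_{d \ge 0} P_d$. Every monomial $\prod_{i,j,k} x_{ijk}^{e_{ijk}}$ carries a uniquely determined degree $d = \sum_{i,j,k} e_{ijk}$, so the monomial basis of $P$ partitions into the disjoint bases of the subspaces $P_d$. Since every polynomial is a finite linear combination of monomials, we have $P = \sum_{d \ge 0} P_d$. To see that the sum is direct, I would use that distinct monomials are linearly independent in $P$: no nonzero element of $P_d$ can be a linear combination of monomials of degrees different from $d$, so $P_d \cap \sum_{e \ne d} P_e = 0$.

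Second I would check the multiplicative compatibility $P_d P_e \subseteq P_{d+e}$. The key observation is that multiplying two monomials adds their exponent arrays entrywise, so the product of a monomial of degree $d$ and a monomial of degree $e$ is again a monomial, of degree $(\sum e_{ijk}) + (\sum e'_{ijk}) = d + e$. By bilinearity of polynomial multiplication, the product of an arbitrary element of $P_d$ with an arbitrary element of $P_e$ is a linear combination of monomials of degree $d+e$, hence lies in $P_{d+e}$.

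I expect no genuine obstacle here, since the statement is an immediate consequence of the monomial basis together with the additivity of degree under multiplication. The only point deserving a word of care is the linear independence of monomials across different degrees, which is what upgrades the decomposition from an ordinary sum to a direct sum; this is immediate from the fact, recorded in the preceding lemma, that the monomials form a basis of $P$.
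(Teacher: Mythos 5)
Your proof is correct; the paper states this lemma without proof, treating it as a standard fact about polynomial algebras, and your argument (direct sum from the partition of the monomial basis by degree, plus additivity of degree under multiplication of monomials extended by bilinearity) is exactly the routine verification being taken for granted. No gaps.
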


\begin{lemma}
We have $P_d = S^d P_1$, the $d$-th symmetric power of $P_1$.
The action of an element $D \in \mathfrak{sl}_2(\mathbb{C})^3$
extends to all basis monomials of $P$ by the derivation rule
$D \cdot ( f g ) = ( D \cdot f ) g + f ( D \cdot g )$.
It follows by induction that
  \[
  D \cdot x_{ijk}^{e_{ijk}}
  =
  e_{ijk} x_{ijk}^{e_{ijk}-1} ( D \cdot x_{ijk} ),
  \]
and hence that  
  \begin{align*}
  D \cdot \prod_{i,j,k} x_{ijk}^{e_{ijk}}
  &=
  \sum_{i',j',k'}
  x_{000}^{e_{000}}
  \cdots
  \big( D \cdot x_{i'j'k'}^{e_{i'j'k'}} \big)
  \cdots
  x_{111}^{e_{111}}
  \\
  &=
  \sum_{i',j',k'}
  x_{000}^{e_{000}}
  \cdots
  \Big( e_{i'j'k'} x_{i'j'k'}^{e_{i'j'k'}-1} ( D \cdot x_{i'j'k'} ) \Big)
  \cdots
  x_{111}^{e_{111}}
  \end{align*}
In particular, $D \cdot P_d \subseteq P_d$ for all $D \in \mathfrak{sl}_2(\mathbb{C})^3$.
\end{lemma}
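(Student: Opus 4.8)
The plan is to treat the several assertions in turn, all of which reduce to the standard fact that a linear action on the generators of a polynomial algebra extends uniquely to a degree-preserving derivation.

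First I would establish $P_d = S^d P_1$. Since $P_1 = M_{2,2,2}(\mathbb{C})$ is the span of the eight linear forms $x_{ijk}$, and $P = \mathbb{C}[x_{ijk}]$ is the free commutative algebra on these forms, $P$ is canonically the symmetric algebra $S(P_1) = \bigoplus_{d \ge 0} S^d P_1$. The degree-$d$ component of this grading is exactly the span of the degree-$d$ monomials, which is $P_d$; hence $P_d = S^d P_1$. This step is essentially a restatement of the identification of the polynomial algebra with the symmetric algebra, and presents no difficulty.

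Next I would verify that each $D \in \mathfrak{sl}_2(\mathbb{C})^3$ extends uniquely to a derivation of $P$. Here it is cleanest to use the differential-operator description, generalizing Lemma \ref{differentialoperators}: since $D \cdot x_{ijk}$ lies in $P_1$, I can write the single first-order operator
  \[
  D = \sum_{i,j,k} ( D \cdot x_{ijk} ) \frac{\partial}{\partial x_{ijk}},
  \]
where $D \cdot x_{ijk}$ is the linear form prescribed by the preceding lemma. Any such operator automatically satisfies the Leibniz rule $D \cdot (fg) = (D \cdot f)g + f(D \cdot g)$, and conversely a derivation of a polynomial algebra is determined by its values on the generators, so this is the unique extension compatible with the action on $P_1$.

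Then the power rule $D \cdot x_{ijk}^{e} = e\, x_{ijk}^{e-1}(D \cdot x_{ijk})$ follows by a routine induction on $e$ from the Leibniz rule (the base case $e = 1$ is trivial; the inductive step applies the product rule to $x_{ijk} \cdot x_{ijk}^{e-1}$), and iterating the Leibniz rule across a product of such powers yields the displayed formula for $D \cdot \prod x_{ijk}^{e_{ijk}}$. Finally, degree preservation $D \cdot P_d \subseteq P_d$ is immediate: each summand $( D \cdot x_{ijk}) \partial/\partial x_{ijk}$ of $D$ replaces one factor $x_{ijk}$ by the degree-one form $D \cdot x_{ijk}$, so the total degree is unchanged; equivalently, in the product-rule formula every term has degree $(e_{i'j'k'} - 1) + 1 + \sum_{(a,b,c) \ne (i',j',k')} e_{abc} = d$. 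There is no genuine obstacle in this lemma; the only point requiring any care is the well-definedness of the derivation extension, which the first-order differential-operator presentation settles at once.
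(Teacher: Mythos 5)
Your proposal is correct and follows essentially the same route as the paper, which states this lemma without a separate proof and simply embeds the inductive Leibniz-rule computation in the statement itself. Your additional remarks --- the identification $P = S(P_1)$, the presentation of the extended action as the first-order operator $\sum_{i,j,k} (D \cdot x_{ijk})\,\partial/\partial x_{ijk}$ in the spirit of Lemma \ref{differentialoperators}, and the explicit degree count --- just fill in the routine details the paper leaves to the reader.
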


\begin{lemma}
For every $d \ge 1$, the subspace $P_d$ is a finite-dimensional representation of $\mathfrak{sl}_2(\mathbb{C})^3$,
and is therefore isomorphic to a direct sum of irreducible representations.
\end{lemma}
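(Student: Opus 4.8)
The plan is to verify the two assertions in turn: first that $P_d$ is a finite-dimensional representation of $\mathfrak{sl}_2(\mathbb{C})^3$, and then that its decomposition into irreducibles follows from the semisimplicity of this Lie algebra.

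For finite-dimensionality, I would simply observe that by the bijection recorded above, the monomial basis of $P_d$ corresponds to the $2 \times 2 \times 2$ arrays of non-negative integers summing to $d$. There are only finitely many such arrays; indeed their number is $\binom{d+7}{7}$, the number of monomials of degree $d$ in the $8$ variables $x_{ijk}$. Hence $\dim P_d < \infty$.

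To see that $P_d$ is a representation, I would note that the preceding lemma already gives $D \cdot P_d \subseteq P_d$ for every $D \in \mathfrak{sl}_2(\mathbb{C})^3$, so each $D$ induces a linear endomorphism of $P_d$. What remains is to confirm that the assignment sending $D$ to the map $f \mapsto D \cdot f$ respects the Lie bracket, so that $[D_1, D_2]$ acts as the commutator of the two individual actions. This is the standard fact that the derivation action of a Lie algebra on a symmetric power $S^d V$ of a representation $V$ is again a representation. Since $P_d = S^d P_1$ and $P_1 = M_{2,2,2}(\mathbb{C})$ is a representation of $\mathfrak{sl}_2(\mathbb{C})^3$, the bracket relations are inherited; one can check the relation on the monomial basis by a routine application of the product rule, which I would not write out in full.

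Finally, for the direct-sum decomposition into irreducibles, I would invoke Weyl's theorem on complete reducibility: every finite-dimensional representation of a semisimple Lie algebra over a field of characteristic zero is a direct sum of irreducible subrepresentations; see Humphreys \cite{Humphreys}. The Lie algebra $\mathfrak{sl}_2(\mathbb{C})^3$ is a direct sum of copies of the simple Lie algebra $\mathfrak{sl}_2(\mathbb{C})$, hence semisimple, and $P_d$ is finite-dimensional, so the theorem applies at once. I expect no genuine obstacle here, since both parts reduce to standard structural facts; the only point demanding any care is verifying that the symmetric-power construction preserves the Lie bracket, and this is a textbook computation rather than a real difficulty.
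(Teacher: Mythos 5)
Your proposal is correct and follows exactly the route the paper intends: the paper states this lemma without proof, relying on the finiteness of the monomial basis (indeed $\dim P_d = \binom{d+7}{7}$), the invariance $D \cdot P_d \subseteq P_d$ from the preceding lemma, and Weyl's theorem on complete reducibility for the semisimple Lie algebra $\mathfrak{sl}_2(\mathbb{C})^3$ (the ``well-known theorem'' mentioned in the introduction). Your added care about the derivation action on $S^d P_1$ respecting the Lie bracket is a reasonable detail to flag, and it is indeed the standard textbook verification.
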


\begin{lemma}
For every non-negative integer $n$, there is (up to isomorphism) a unique
irreducible representation of $\mathfrak{sl}_2(\mathbb{C})$ with dimension
$n{+}1$, denoted $V(n)$. This representation is generated by a vector $v_n$
with $H \cdot v_n = n v_n$. With respect to the basis $v_{n-2i}$ ($i = 0, 1,
\dots, n$), the action of $\mathfrak{sl}_2(\mathbb{C})$ on $V(n)$ is given by
  \begin{alignat*}{2}
  H \cdot v_{n-2i} &= (n{-}2i) v_{n-2i},
  \\
  E \cdot v_{n-2i} &= (n{-}i{+}1) v_{n-2i+2}
  \quad
  (i = 1, 2, \dots, n),
  &\qquad
  E \cdot v_n &= 0,
  \\
  F \cdot v_{n-2i} &= (i{+}1) v_{n-2i-2}
  \quad
  (i = 0, 1, \dots, n{-}1),
  &\qquad
  F \cdot v_{-n} &= 0.
  \end{alignat*}
\end{lemma}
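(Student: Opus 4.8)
The plan is to use the standard highest-weight method, adapted to the normalization $v_{n-2i} = \tfrac{1}{i!} F^i v_n$ that makes the stated formulas come out exactly. Throughout I would use the commutation relations $[H,E] = 2E$, $[H,F] = -2F$, and $[E,F] = H$, which follow immediately from the matrix definitions in \eqref{sl2basis}.

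First I would establish the weight chain. Since $\mathbb{C}$ is algebraically closed and the representation $V$ is finite-dimensional, $H$ has an eigenvector; because $H(E\cdot v) = (\lambda{+}2)E\cdot v$ whenever $H \cdot v = \lambda v$, the operator $E$ raises the $H$-eigenvalue by $2$, so among the finitely many eigenvalues there is one, say $n$, with $n{+}2$ not an eigenvalue. Any eigenvector $v_n$ for $n$ then satisfies $E \cdot v_n = 0$ and $H \cdot v_n = n v_n$. Setting $v_{n-2i} = \tfrac{1}{i!} F^i v_n$, the relation $[H,F] = -2F$ gives by induction $H \cdot v_{n-2i} = (n{-}2i) v_{n-2i}$, so the nonzero $v_{n-2i}$ are eigenvectors for distinct eigenvalues and hence linearly independent; the formula $F \cdot v_{n-2i} = (i{+}1) v_{n-2i-2}$ is immediate from the definition.

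The technical heart is the action of $E$, for which I would prove the operator identity $[E, F^m] = m\, F^{m-1}(H - m + 1)$ by induction on $m$, using $[E,F]=H$ together with $H F^{k} = F^{k}(H - 2k)$. Applying this to the highest weight vector (where $E \cdot v_n = 0$) yields $E F^m v_n = m(n{-}m{+}1)\, F^{m-1} v_n$, and after dividing by $i!$ this becomes precisely $E \cdot v_{n-2i} = (n{-}i{+}1)\, v_{n-2i+2}$. The same identity forces integrality of the highest weight: if $m$ is least with $F^{m+1} v_n = 0$ but $F^m v_n \neq 0$, then applying $E$ to $F^{m+1}v_n = 0$ gives $0 = (m{+}1)(n{-}m)\, F^m v_n$, whence $n = m$. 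Thus $n$ is a non-negative integer, the chain has exactly $n{+}1$ terms $v_n, v_{n-2}, \dots, v_{-n}$, and $\dim V = n{+}1$.

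Finally I would settle irreducibility, uniqueness, and existence. The span of the $v_{n-2i}$ is closed under $H$, $E$, and $F$ by the three displayed formulas, so it is a nonzero subrepresentation; as $V$ is irreducible it equals $V$, and since the highest weight $n$ determines all the structure constants, the isomorphism type is determined by $n$, giving uniqueness. For existence I would either verify directly that the stated formulas define a representation by checking the three relations on basis vectors, or more cleanly exhibit $V(n)$ as the $n$-th symmetric power $S^n(\mathbb{C}^2)$ of the natural representation and read off the same action. The main obstacle is the interlocking of the commutator identity with the termination argument: it is exactly this step that converts the a priori arbitrary highest weight into the integer $n$ and simultaneously pins down both the dimension $n{+}1$ and the coefficients appearing in the action of $E$.
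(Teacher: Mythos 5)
Your proof is correct. Note that the paper itself offers no proof of this lemma: it is stated as one of the ``elementary results in the representation theory of Lie algebras'' recalled at the start of Section~\ref{sectionrepresentationtheory}, with the reader referred to the standard texts (Jacobson, Humphreys, de Graaf, Erdmann--Wildon). What you have written is precisely the classical highest-weight argument from those references, and every step checks out: the divided-power normalization $v_{n-2i}=\tfrac{1}{i!}F^i v_n$ does produce exactly the stated coefficients, the identity $[E,F^m]=mF^{m-1}(H-m+1)$ is the right engine for both the $E$-action and the termination argument, and setting $E F^{m+1}v_n=0$ with $F^m v_n\neq 0$ correctly forces $n=m\in\mathbb{Z}_{\ge 0}$ and $\dim V=n+1$. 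The only point you defer is existence, but either of the two routes you name (direct verification of the bracket relations on the basis, or realizing $V(n)$ as $S^n(\mathbb{C}^2)$) closes that gap in a few lines, and the symmetric-power model is in fact the one implicitly used elsewhere in the paper, where $P_d=S^dP_1$ is decomposed into irreducibles. No changes needed.
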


\begin{definition}
In the irreducible representation $V(n)$ of $\mathfrak{sl}_2(\mathbb{C})$,
the basis vector $v_{n-2i}$ is a \textbf{weight vector} (that is, $H$-eigenvector) of weight $n{-}2i$.
\end{definition}

\begin{lemma}
An irreducible representation of $\mathfrak{sl}_2(\mathbb{C})^3$
is isomorphic to the tensor product $V(a) \otimes V(b) \otimes V(c)$
for some non-negative integers $a, b, c$.
\end{lemma}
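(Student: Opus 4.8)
The plan is to assemble two results that are already available in this section. The first is the structure theorem for irreducible representations of a semisimple Lie algebra: such a representation factors as a tensor product of irreducible representations of its simple summands. This is exactly Proposition 1.1 of Neher, Savage and Senesi \cite{NSS}, which we have already invoked above in proving that $M_{2,2,2}(\mathbb{C})$ is irreducible. The second is the classification of the irreducible representations of $\mathfrak{sl}_2(\mathbb{C})$ as the modules $V(n)$ for $n \ge 0$, recorded in the preceding lemma.

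Concretely, I would argue as follows. Let $W$ be a finite-dimensional irreducible representation of $\mathfrak{sl}_2(\mathbb{C})^3 = \mathfrak{sl}_2(\mathbb{C}) \oplus \mathfrak{sl}_2(\mathbb{C}) \oplus \mathfrak{sl}_2(\mathbb{C})$. By the cited structure theorem, $W \cong W_1 \otimes W_2 \otimes W_3$, where each $W_\ell$ is a finite-dimensional irreducible representation of the $\ell$-th copy of $\mathfrak{sl}_2(\mathbb{C})$, and where the summand in position $\ell$ acts on the factor $W_\ell$ and trivially on the other two factors. Applying the classification of $\mathfrak{sl}_2(\mathbb{C})$-irreducibles, each $W_\ell$ is isomorphic to $V(n_\ell)$ for a unique non-negative integer $n_\ell$. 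Setting $(a, b, c) = (n_1, n_2, n_3)$ then gives $W \cong V(a) \otimes V(b) \otimes V(c)$, as required.

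I expect no genuine obstacle at the level of this section, since the real content is packaged inside the structure theorem being cited; the proof is essentially the conjunction of the two lemmas above. For completeness I might recall why that theorem holds: restricting $W$ to the first summand and using Schur's lemma together with the fact that the three summands pairwise commute, one shows that the isotypic decomposition of $W$ under the first summand has a single isotypic type, which identifies $W$ with a tensor product $W_1 \otimes \mathrm{Hom}_{\mathfrak{sl}_2(\mathbb{C})}(W_1, W)$; iterating over the remaining summands and checking that each resulting factor is itself irreducible yields the claimed decomposition. The one point requiring care is the uniqueness of the integers $a, b, c$, which follows from the uniqueness in the $\mathfrak{sl}_2(\mathbb{C})$ classification once the tensor factors have been pinned down by the separate actions of the three summands.
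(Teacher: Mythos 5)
Your proof is correct and matches the paper's intent: the paper states this lemma without proof, relying implicitly on exactly the two ingredients you assemble, namely the tensor-product structure theorem for irreducibles of a semisimple Lie algebra (cited earlier via Proposition 1.1 of Neher, Savage and Senesi \cite{NSS}) and the classification of irreducible $\mathfrak{sl}_2(\mathbb{C})$-modules as the $V(n)$. Your additional sketch of why the structure theorem holds goes beyond what the paper records but introduces no discrepancy.
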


\begin{lemma}
The polynomials invariant under the group
$SL_2(\mathbb{C})^3$
coincide with the polynomials annihilated by the Lie algebra
$\mathfrak{sl}_2(\mathbb{C})^3$.
\end{lemma}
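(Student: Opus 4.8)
The plan is to use the standard correspondence between a connected Lie group and its Lie algebra, mediated by the exponential map. Write $G = SL_2(\mathbb{C})^3$ and $\mathfrak{g} = \mathfrak{sl}_2(\mathbb{C})^3$. Since $D \cdot P_d \subseteq P_d$ for every $D \in \mathfrak{g}$, and $G$ likewise preserves each homogeneous component, it suffices to work degree by degree on the finite-dimensional space $P_d$. On $P_d$ each $D$ acts as a linear operator $\rho(D)$, and the action of the one-parameter subgroup $\exp(tD)$ is the matrix exponential $e^{t\rho(D)}$; this is the content of linearizing the group action, and it is the only analytic input the argument needs. For the forward direction, suppose $f \in P_d$ is $G$-invariant, so that $e^{t\rho(D)} f = f$ for every $D \in \mathfrak{g}$ and every $t$. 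Differentiating at $t = 0$ gives $\rho(D) f = 0$, that is, $D \cdot f = 0$; hence every $G$-invariant polynomial is annihilated by $\mathfrak{g}$.

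For the converse, suppose $D \cdot f = 0$ for every $D \in \mathfrak{g}$; equivalently, by linearity, $f$ is annihilated by the nine basis elements $H_\ell, E_\ell, F_\ell$. Fix $D \in \mathfrak{g}$. Because $\rho(D) f = 0$, every higher power also kills $f$, since $\rho(D)^n f = \rho(D)^{n-1}(\rho(D) f) = 0$ for $n \ge 1$. The exponential series then collapses to $e^{t\rho(D)} f = \sum_{n \ge 0} \frac{t^n}{n!} \rho(D)^n f = f$, so $f$ is fixed by the entire one-parameter subgroup $\exp(tD)$.

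To finish I would invoke connectedness. The group $SL_2(\mathbb{C})$ is connected, hence so is the direct product $G$, and a connected Lie group is generated by any neighborhood of the identity, in particular by the image of the exponential map. Thus an arbitrary $g \in G$ factors as a finite product $g = \exp(D_1) \cdots \exp(D_m)$ with each $D_i \in \mathfrak{g}$, and applying the previous paragraph repeatedly yields $g \cdot f = f$. Therefore $f$ is $G$-invariant, which completes the equivalence.

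I expect the main obstacle to be conceptual rather than computational: one must be careful that Lie algebra annihilation controls only the identity component of the symmetry group, so the argument genuinely relies on the connectedness of $SL_2(\mathbb{C})$. For a disconnected group the two notions of invariance would diverge and the statement could fail, so connectedness is precisely the hypothesis that makes the exponential passage from $\mathfrak{g}$ back to $G$ legitimate.
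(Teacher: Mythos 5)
Your proof is correct, and it supplies the standard Lie-theoretic argument that the paper itself omits: this lemma is stated there without proof, as background material (with references to Stillwell and Procesi). Both directions are handled properly --- differentiating the one-parameter subgroup at $t=0$ for the forward implication, and collapsing the exponential series $e^{t\rho(D)}f = \sum_{n\ge 0} \frac{t^n}{n!}\rho(D)^n f = f$ for the converse. You are also right to flag connectedness as the essential hypothesis, and your phrasing is careful on a point that is often fumbled: the exponential map of $SL_2(\mathbb{C})$ is \emph{not} surjective, so one cannot write an arbitrary group element as a single exponential, but a connected Lie group is generated by any identity neighborhood and hence every element is a finite product $\exp(D_1)\cdots\exp(D_m)$, which is exactly what your argument uses. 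The only input you take on faith is that the group action on $P_d$ obtained by substitution is the exponential of the derivation action $\rho(D)$; this is the standard compatibility $\Pi(\exp D) = e^{\rho(D)}$ for finite-dimensional representations, and it is consistent with how the paper introduces the linearized action, so no gap results.
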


\begin{lemma}
A polynomial $f \in P_d$ is invariant if and only if
$D \cdot f = 0$ for all $D \in \mathfrak{sl}_2(\mathbb{C})^3$.
Equivalently, the invariant polynomials correspond to the summands of $P_d$
isomorphic to $V(0) \otimes V(0) \otimes V(0)$.
Therefore, a polynomial $f \in P_d$ is invariant if and only if
$H_\ell \cdot f = 0$ and $E_\ell \cdot f = 0$ for $\ell = 1, 2, 3$.
\end{lemma}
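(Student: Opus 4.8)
The plan is to treat the three assertions in turn, leaning throughout on the decomposition of $P_d$ into irreducible summands $V(a) \otimes V(b) \otimes V(c)$ supplied by the preceding lemmas.

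First I would dispose of the opening equivalence. The previous lemma already identifies the polynomials fixed by the group $SL_2(\mathbb{C})^3$ with those annihilated by the Lie algebra $\mathfrak{sl}_2(\mathbb{C})^3$. Since the action of $\mathfrak{sl}_2(\mathbb{C})^3$ is linear and $\{ H_\ell, E_\ell, F_\ell : \ell = 1, 2, 3 \}$ is a basis, the condition $D \cdot f = 0$ for all $D$ is equivalent to $D \cdot f = 0$ for each of these nine basis elements. This settles the first "if and only if" and reduces everything that follows to the study of the joint kernel of the basis operators.

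Second, for the "equivalently" clause, I would note that $V(0)$ is the one-dimensional trivial representation on which $H$, $E$, and $F$ all act as zero, so $V(0) \otimes V(0) \otimes V(0)$ is precisely the trivial representation of $\mathfrak{sl}_2(\mathbb{C})^3$. In any irreducible summand $V(a) \otimes V(b) \otimes V(c)$, the subspace annihilated by the whole algebra is a subrepresentation, hence by irreducibility either zero or the entire summand; the latter occurs exactly when the summand is trivial, that is, when $a = b = c = 0$. Summing over the decomposition of $P_d$, the space of invariants is therefore the span of the copies of $V(0) \otimes V(0) \otimes V(0)$.

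The substantive step is the concluding "therefore," namely that annihilation by the six operators $H_\ell, E_\ell$ already forces annihilation by the $F_\ell$. The forward implication is immediate. For the converse I would argue summand by summand. Because $E_\ell$ acts only on the $\ell$-th tensor factor and $\ker\bigl( E|_{V(n)} \bigr) = \langle v_n \rangle$, the common kernel of $E_1, E_2, E_3$ on $V(a) \otimes V(b) \otimes V(c)$ is the one-dimensional highest-weight line $\langle v_a \otimes v_b \otimes v_c \rangle$, of weight $(a, b, c)$. Requiring in addition that this vector be killed by every $H_\ell$ forces its weight to be $(0, 0, 0)$, whence $a = b = c = 0$. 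Thus the joint kernel of the $H_\ell$ and $E_\ell$ meets an irreducible summand nontrivially only when that summand is trivial, and on a trivial summand the $F_\ell$ act as zero as well. Decomposing $f$ along the irreducible summands, the hypotheses confine $f$ to the trivial summands, where all of $\mathfrak{sl}_2(\mathbb{C})^3$ annihilates it, giving $D \cdot f = 0$ for every $D$. I expect the clash between highest weight and weight zero to be the crux: the whole argument turns on the observation that a simultaneous highest-weight vector of weight zero can only inhabit a trivial representation.
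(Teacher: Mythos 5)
Your argument is correct and is exactly the standard deduction the paper has in mind: the authors state this lemma without proof, relying on the preceding lemmas (complete reducibility of $P_d$, the classification of irreducibles as $V(a) \otimes V(b) \otimes V(c)$, and the explicit $H, E, F$ action on $V(n)$), and you have filled in precisely that chain of reasoning, including the key point that a joint highest-weight vector of weight $(0,0,0)$ can only lie in a trivial summand. No gaps; the identification of $\ker E_1 \cap \ker E_2 \cap \ker E_3$ with the highest-weight line $\langle v_a \otimes v_b \otimes v_c \rangle$ and the subsequent weight argument are both sound.
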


\begin{lemma}
The basis monomial
  \[
  \prod_{i,j,k} x_{ijk}^{e_{ijk}},
  \]
is a simultaneous eigenvector for $H_1, H_2, H_3$ with eigenvalues
  \[
  \sum_{j,k} e_{0jk} - \sum_{j,k} e_{1jk},
  \qquad
  \sum_{i,k} e_{i0k} - \sum_{i,k} e_{i1k},
  \qquad
  \sum_{i,j} e_{ij0} - \sum_{i,j} e_{ij1}.
  \]
\end{lemma}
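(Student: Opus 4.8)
The plan is to reduce the claim to the action of each $H_\ell$ on the individual indeterminates $x_{ijk}$, where the eigenvalue behavior has already been recorded, and then to propagate this through the derivation rule for monomials. The essential point is that $H_\ell$, unlike $E_\ell$ and $F_\ell$, does not move weight between distinct indeterminates, so it scales each basis monomial rather than producing a different one.

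First I would observe that the earlier lemma describing the action of the basis of $\mathfrak{sl}_2(\mathbb{C})^3$ on $M_{2,2,2}(\mathbb{C})$ asserts exactly that each $x_{ijk}$ is a simultaneous eigenvector for $H_1, H_2, H_3$. All twelve of those equations can be written compactly as $H_1 \cdot x_{ijk} = (-1)^i x_{ijk}$, $H_2 \cdot x_{ijk} = (-1)^j x_{ijk}$, and $H_3 \cdot x_{ijk} = (-1)^k x_{ijk}$.

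Next, fixing $\ell = 1$ and writing $m = \prod_{i,j,k} x_{ijk}^{e_{ijk}}$, I would apply the derivation formula for the action of $D \in \mathfrak{sl}_2(\mathbb{C})^3$ on a basis monomial together with the rule $D \cdot x_{i'j'k'}^{e} = e\, x_{i'j'k'}^{e-1}(D \cdot x_{i'j'k'})$. Substituting $H_1 \cdot x_{i'j'k'} = (-1)^{i'} x_{i'j'k'}$, the factor $x_{i'j'k'}^{e_{i'j'k'}-1} \cdot x_{i'j'k'}$ reconstitutes the original power while every other factor is untouched, so each summand equals $(-1)^{i'} e_{i'j'k'}\, m$. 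Summing over all $(i',j',k')$ then gives $H_1 \cdot m = \left(\sum_{i',j',k'} (-1)^{i'} e_{i'j'k'}\right) m$, and separating the $i'=0$ and $i'=1$ contributions yields the claimed eigenvalue $\sum_{j,k} e_{0jk} - \sum_{j,k} e_{1jk}$.

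Finally, the identical argument with $(-1)^{j'}$ and $(-1)^{k'}$ in place of $(-1)^{i'}$ produces the eigenvalues for $H_2$ and $H_3$, completing the proof. There is no genuine obstacle here; the only step requiring care is the bookkeeping observation that the derivation rule returns a scalar multiple of $m$ rather than a new monomial, which holds precisely because each $x_{ijk}$ is an $H_\ell$-eigenvector and $H_\ell$ therefore preserves every monomial.
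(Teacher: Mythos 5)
Your proof is correct and follows essentially the same route as the paper: apply the power rule $H_1 \cdot x_{ijk}^{e_{ijk}} = (-1)^i e_{ijk} x_{ijk}^{e_{ijk}}$, sum via the derivation rule, and split the sum by the value of the first index, with the other two cases handled symmetrically. No gaps.
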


\begin{proof}
For $\ell = 1$ we have
  \[
  H_1 \cdot x_{ijk}^{e_{ijk}}
  =
  e_{ijk} x_{ijk}^{e_{ijk}-1} \big( H_1 \cdot x_{ijk} \big)
  =
  e_{ijk} x_{ijk}^{e_{ijk}-1} (-1)^i x_{ijk}
  =
  (-1)^i e_{ijk} x_{ijk}^{e_{ijk}},
  \]
and therefore
  \[
  H_1 \cdot \prod_{i,j,k} x_{ijk}^{e_{ijk}}
  =
  \Big( \sum_{j,k} e_{0jk} - \sum_{j,k} e_{1jk} \Big)
  \prod_{i,j,k} x_{ijk}^{e_{ijk}}.
  \]
The other two cases are similar.
\end{proof}

\begin{definition}
The \textbf{weight space} $W(d;a,b,c)$ is the subspace of $P_d$
spanned by the monomials which have eigenvalues $a$, $b$, $c$ for $H_1$, $H_2$, $H_3$ respectively.
The \textbf{zero weight space} is $W(d;0,0,0)$.
\end{definition}

\begin{lemma}
The basis monomial
  \[
  \prod_{i,j,k} x_{ijk}^{e_{ijk}},
  \]
belongs to $W(d;0,0,0)$ if and only if
  \[
  \sum_{i,j,k} e_{ijk} = n,
  \quad
  \sum_{j,k} e_{0jk} = \sum_{j,k} e_{1jk},
  \quad
  \sum_{i,k} e_{i0k} = \sum_{i,k} e_{i1k},
  \quad
  \sum_{i,j} e_{ij0} = \sum_{i,j} e_{ij1}.
  \]
That is, the $2 \times 2 \times 2$ array $( e_{ijk} )$ of exponents
satisfies the condition that in each of the three directions,
the parallel $2 \times 2$ slices have equal sums.
\end{lemma}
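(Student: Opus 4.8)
The plan is to obtain this statement as an immediate consequence of the two preceding lemmas, together with the definition of the homogeneous subspace $P_d$. First I would invoke the lemma computing simultaneous eigenvalues, which shows that the basis monomial $\prod_{i,j,k} x_{ijk}^{e_{ijk}}$ is a simultaneous eigenvector for $H_1, H_2, H_3$ with eigenvalues
\[
a = \sum_{j,k} e_{0jk} - \sum_{j,k} e_{1jk},
\quad
b = \sum_{i,k} e_{i0k} - \sum_{i,k} e_{i1k},
\quad
c = \sum_{i,j} e_{ij0} - \sum_{i,j} e_{ij1}.
\]
Thus the monomial lies in the weight space $W(d;a,b,c)$ for exactly this triple $(a,b,c)$, where $d = \sum_{i,j,k} e_{ijk}$ is its degree.

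Next, by the definition of $W(d;0,0,0)$ as the span of those degree-$d$ monomials whose $H_1, H_2, H_3$-eigenvalues all vanish, the monomial belongs to $W(d;0,0,0)$ if and only if its degree equals $d$ (that is, $\sum_{i,j,k} e_{ijk} = d$) and $a = b = c = 0$. Setting each of the three eigenvalues above equal to zero and rearranging yields precisely the three equalities
\[
\sum_{j,k} e_{0jk} = \sum_{j,k} e_{1jk},
\quad
\sum_{i,k} e_{i0k} = \sum_{i,k} e_{i1k},
\quad
\sum_{i,j} e_{ij0} = \sum_{i,j} e_{ij1}.
\]

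Finally I would record the combinatorial reformulation. Each of the three vanishing conditions equates the sum of the entries of one $2 \times 2$ slice of the exponent array $(e_{ijk})$ with the sum over the parallel slice in the same direction; fixing the first, second, or third index at $0$ versus $1$ corresponds to the three coordinate directions of the cube. Hence the condition $a = b = c = 0$ is exactly the requirement that in each of the three directions the two parallel $2 \times 2$ slices have equal entry sums.

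I do not anticipate any genuine obstacle here: the statement is a direct translation of the eigenvalue formulas into the language of slice sums, and the only care required is the bookkeeping needed to confirm that the index ranges in the three sums correspond correctly to the three pairs of parallel slices.
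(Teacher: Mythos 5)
Your proposal is correct and is exactly the argument the paper intends: it states this lemma without proof as an immediate consequence of the preceding eigenvalue lemma and the definition of the weight space $W(d;a,b,c)$, which is precisely your two-step derivation. (You also rightly read the paper's ``$\sum_{i,j,k} e_{ijk} = n$'' as a typo for $d$.)
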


\begin{lemma} \label{lemmaevendegree}
If $d$ is odd then the zero weight space $W(d;0,0,0)$ is the zero subspace.
In particular, there are no invariant polynomials in odd degrees.
\end{lemma}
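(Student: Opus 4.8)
The plan is to reduce everything to a parity observation using the combinatorial characterization of the zero weight space from the preceding lemma. I would begin by recalling that a basis monomial $\prod_{i,j,k} x_{ijk}^{e_{ijk}}$ of degree $d$ lies in $W(d;0,0,0)$ precisely when its exponent array satisfies the three slice-sum equalities; in particular it must satisfy the first one,
\[
\sum_{j,k} e_{0jk} = \sum_{j,k} e_{1jk}.
\]

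The decisive step is to combine this equality with the fact that the total degree decomposes along the first direction as $d = \sum_{j,k} e_{0jk} + \sum_{j,k} e_{1jk}$. Substituting the slice equality gives $d = 2 \sum_{j,k} e_{0jk}$, and since the exponents are non-negative integers the right-hand side is an even number. Hence $d$ is even whenever $W(d;0,0,0)$ contains a basis monomial. Contrapositively, if $d$ is odd then no monomial of degree $d$ can satisfy even the first of the three constraints, so the set of basis monomials spanning $W(d;0,0,0)$ is empty, and the weight space is the zero subspace.

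To conclude that there are no invariants, I would invoke the earlier lemma identifying the invariant polynomials in $P_d$ with the irreducible summands isomorphic to $V(0) \otimes V(0) \otimes V(0)$. Every vector in such a summand has weight $(0,0,0)$ and therefore lies in $W(d;0,0,0)$. For odd $d$ this space is trivial, so there can be no nonzero invariant polynomial.

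There is no genuine obstacle in this argument: once the slice-sum description of the zero weight space is available, the proof is a single-line parity count, and any one of the three directions suffices, so the other two equalities need not even be invoked. The only point requiring mild care is that the relevant parity is that of the total degree $d$, expressed as twice a non-negative integer, rather than any count of the indeterminates themselves.
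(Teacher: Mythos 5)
Your proof is correct and follows exactly the argument the paper intends: the paper states this lemma without an explicit proof, but the same parity observation appears verbatim in the proof of Theorem~\ref{theoremdimensions} (``each of the sums in parentheses equals $d/2$''), and your derivation of $d = 2\sum_{j,k} e_{0jk}$ from the first slice-sum equality, together with the identification of invariants with weight-$(0,0,0)$ vectors, is precisely that reasoning.
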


\begin{lemma}
The actions of $E_1, E_2, E_3$ induce these linear maps on weight spaces:
  \begin{align*}
  &E_1 \colon W(d;0,0,0) \to W(d;2,0,0),
  \\
  &E_2 \colon W(d;0,0,0) \to W(d;0,2,0),
  \\
  &E_3 \colon W(d;0,0,0) \to W(d;0,0,2).
  \end{align*}
\end{lemma}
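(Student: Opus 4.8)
The plan is to show that $E_\ell$ raises the $\ell$-th weight by $2$ while leaving the other two weights fixed, and then to specialize to the zero weight space. The conceptual reason is the commutation relation inside the $\ell$-th copy of $\mathfrak{sl}_2(\mathbb{C})$, namely $[H_\ell, E_\ell] = 2 E_\ell$, together with the fact that $H_m$ and $E_\ell$ lie in different summands of $\mathfrak{sl}_2(\mathbb{C})^3$ and therefore commute when $m \neq \ell$. Since the action on $P_d$ is a representation of the Lie algebra, these bracket relations hold as identities of operators on $P_d$.

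First I would record the bracket relations from the standard basis \eqref{sl2basis}: a one-line matrix computation gives $[H,E] = HE - EH = 2E$ in a single copy, while distinct summands commute. Next, for a weight vector $v \in W(d;a,b,c)$, I would write $H_1 E_1 = E_1 H_1 + [H_1,E_1] = E_1 H_1 + 2 E_1$, so that $H_1 \cdot (E_1 \cdot v) = E_1 \cdot (a v) + 2\, E_1 \cdot v = (a{+}2)(E_1 \cdot v)$. The same manipulation with $H_2$ and $H_3$ gives $H_2 \cdot (E_1 \cdot v) = b\,(E_1 \cdot v)$ and $H_3 \cdot (E_1 \cdot v) = c\,(E_1 \cdot v)$, because $[H_2,E_1] = [H_3,E_1] = 0$. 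Hence $E_1 \cdot v \in W(d;a{+}2,b,c)$; degree is preserved since $E_1$ maps $P_d$ into $P_d$. Taking $a=b=c=0$ gives $E_1 \colon W(d;0,0,0) \to W(d;2,0,0)$.

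Alternatively, matching the explicit monomial style of the preceding lemmas, I would argue directly. As a derivation determined by $E_1 \cdot x_{0jk} = 0$ and $E_1 \cdot x_{1jk} = x_{0jk}$, the operator is $E_1 = \sum_{j,k} x_{0jk}\, \partial/\partial x_{1jk}$, so applying it to a basis monomial yields a sum of monomials each obtained by lowering some exponent $e_{1jk}$ by one and raising $e_{0jk}$ by one. Such a replacement leaves the total degree unchanged, raises the first eigenvalue $\sum_{j,k} e_{0jk} - \sum_{j,k} e_{1jk}$ by exactly $2$, and fixes the second and third eigenvalues since the indices $j$ and $k$ are untouched. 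Each resulting monomial therefore lies in $W(d;2,0,0)$ whenever the original lies in $W(d;0,0,0)$, and linearity completes the inclusion.

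The cases of $E_2$ and $E_3$ follow by the evident symmetry of the three tensor directions. I do not expect a genuine obstacle here: the only point needing care is bookkeeping the weight shift, that is, confirming that turning an index $1$ into $0$ in the first slot changes the $H_1$-eigenvalue by $+2$ and the others not at all. This is immediate from the eigenvalue formulas already established, but it is the step where a stray sign or factor would most easily slip in.
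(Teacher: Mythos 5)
Your proposal is correct. The paper states this lemma without proof, treating it as standard $\mathfrak{sl}_2(\mathbb{C})$ representation theory, and either of your two arguments fills that gap soundly: the bracket computation $[H_\ell,E_\ell]=2E_\ell$ together with the vanishing of brackets across distinct summands is the standard conceptual route, and your direct monomial computation (shifting an exponent $e_{1jk}\mapsto e_{1jk}-1$, $e_{0jk}\mapsto e_{0jk}+1$, which changes the first eigenvalue by $+2$ and fixes the degree and the other two eigenvalues) matches the explicit derivation-on-monomials style the paper uses for the analogous $H_\ell$-eigenvalue lemma. No gaps.
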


\begin{definition}
We define a linear map
  \[
  \mathcal{E}_d \colon
  W(d;0,0,0) \longrightarrow W(d;2,0,0) \oplus W(d;0,2,0) \oplus W(d;0,0,2),
  \]
by the equation $\mathcal{E}_d(f) = ( \, E_1 \cdot f, \, E_2 \cdot f, \, E_3 \cdot f \, )$
for all $f \in W(d;0,0,0)$.
\end{definition}

\begin{lemma}
The invariant polynomials in $P_d$ coincide with the kernel of $\mathcal{E}_d$.
\end{lemma}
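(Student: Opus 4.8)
The plan is to prove the two inclusions by unpacking the definitions of $\mathcal{E}_d$ and of the zero weight space, and then invoking the earlier characterization of invariant polynomials in terms of the operators $H_\ell$ and $E_\ell$ alone. The central observation is that the conditions defining membership in $W(d;0,0,0)$, taken together with the conditions defining membership in $\ker \mathcal{E}_d$, recover precisely the six equations $H_\ell \cdot f = 0$ and $E_\ell \cdot f = 0$ $(\ell = 1, 2, 3)$ that characterize invariance.

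First I would record that the domain of $\mathcal{E}_d$ is by definition the zero weight space $W(d;0,0,0)$, which (by the lemma computing the $H_\ell$-eigenvalues of monomials) consists exactly of those $f \in P_d$ satisfying $H_1 \cdot f = H_2 \cdot f = H_3 \cdot f = 0$. Thus any element of $\ker \mathcal{E}_d$ automatically lies in $W(d;0,0,0)$ and hence is annihilated by $H_1, H_2, H_3$.

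Next, for $f \in W(d;0,0,0)$, the equation $\mathcal{E}_d(f) = (E_1 \cdot f, E_2 \cdot f, E_3 \cdot f) = 0$ holds if and only if $E_\ell \cdot f = 0$ for each $\ell$, since a tuple in a direct sum vanishes exactly when each component does. Combining this with the previous paragraph, I would conclude that $f \in \ker \mathcal{E}_d$ if and only if $H_\ell \cdot f = 0$ and $E_\ell \cdot f = 0$ for all $\ell$. By the earlier lemma asserting that a polynomial is invariant precisely when it is annihilated by every $H_\ell$ and every $E_\ell$, this is equivalent to $f$ being invariant, and both inclusions (invariants $\subseteq \ker \mathcal{E}_d$ and $\ker \mathcal{E}_d \subseteq$ invariants) follow at once.

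The argument is essentially definitional, so there is no serious obstacle. The one point meriting care is why the conditions $F_\ell \cdot f = 0$ need not be imposed separately: this is exactly the content of the cited characterization lemma, whose justification rests on highest-weight theory, since a weight-zero vector annihilated by all the raising operators $E_\ell$ generates a copy of the trivial representation $V(0) \otimes V(0) \otimes V(0)$ and is therefore annihilated by the lowering operators $F_\ell$ as well. Once this is invoked, restricting the kernel computation to the domain $W(d;0,0,0)$ supplies the $H_\ell$-conditions for free, while $\ker \mathcal{E}_d$ supplies the $E_\ell$-conditions, completing the identification.
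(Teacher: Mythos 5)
Your argument is correct and matches the paper's (implicit) reasoning: the paper states this lemma without proof precisely because it follows definitionally from the preceding lemmas, namely that invariance is equivalent to $H_\ell \cdot f = 0$ and $E_\ell \cdot f = 0$ for $\ell = 1,2,3$, that the $H_\ell$-conditions characterize membership in the domain $W(d;0,0,0)$, and that the $E_\ell$-conditions characterize membership in $\ker \mathcal{E}_d$. Your additional remark on why the $F_\ell$-conditions come for free is exactly the highest-weight-theoretic content of the cited characterization lemma.
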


We can represent the linear map $\mathcal{E}_d$ by the matrix $[\mathcal{E}_d]$
with respect to the ordered monomial bases of the weight spaces.
The size $[\mathcal{E}_d]$ is
  \[
  \big( \dim W(d;2,0,0) + \dim W(d;0,2,0) + \dim W(d;0,0,2) \, \big)
  \times
  \dim W(d;0,0,0).
  \]
In fact the three dimensions in parentheses are equal; this follows by considering the automorphisms
of $\mathfrak{sl}_2(\mathbb{C})^3$ which permute the three summands.


\section{Cayley's hyperdeterminant via linear algebra} \label{sectionhyperdeterminant}

In this section we show by direct calculation that every (nonconstant) invariant polynomial of degree $\le 4$
is a scalar multiple of Cayley's hyperdeterminant.

We identify monomials with sequences of exponents lexicographically ordered by
their triples of subscripts:
  \[
  \prod_{i,j,k} x_{ijk}^{e_{ijk}}  
  \longleftrightarrow
  [
  e_{000},
  e_{001},
  e_{010},
  e_{011},
  e_{100},
  e_{101},
  e_{110},
  e_{111}
  ].
  \]
Within each weight space, we order the basis monomials lexicographically.

\begin{lemma}
There are no invariant polynomials in degree 2.
\end{lemma}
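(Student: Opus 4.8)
The plan is to use the framework established in the preceding lemmas, which tells us that the invariant polynomials in $P_d$ coincide with the kernel of the linear map $\mathcal{E}_d$, and moreover that the invariants sit inside the zero weight space $W(d;0,0,0)$. So the first step is to enumerate the monomial basis of $W(2;0,0,0)$. By the lemma characterizing the zero weight space, I need the $2 \times 2 \times 2$ arrays of exponents $(e_{ijk})$ with total sum $2$ such that the two parallel $2 \times 2$ slices have equal sums in each of the three directions. Each slice sum must equal $1$ (half the total degree $2$), and a short combinatorial enumeration should produce exactly the four monomials claimed implicitly by the introduction's statement that the matrix has size $6 \times 4$: the degree-$2$ monomials $x_{ijk} x_{i'j'k'}$ where the two index triples are antipodal on the cube, namely $x_{000}x_{111}$, $x_{001}x_{110}$, $x_{010}x_{101}$, and $x_{011}x_{100}$.

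Next I would determine the codomain. Since $\dim W(2;0,0,0) = 4$ and the three target weight spaces $W(2;2,0,0)$, $W(2;0,2,0)$, $W(2;0,0,2)$ each have the same dimension, I would enumerate the monomials in one of them (say $W(2;2,0,0)$), which requires the first-direction slice sums to be $2$ and $0$ while the other two directions remain balanced; counting should give dimension $2$, so the full codomain has dimension $6$, matching the stated matrix size $6 \times 4$. With both bases fixed in the lexicographic order prescribed just before the lemma, I would then compute the matrix $[\mathcal{E}_2]$ by applying the operators $E_1, E_2, E_3$ to each of the four basis monomials, using the derivation rule together with the explicit action $E_\ell \cdot x_{\cdots} = \cdots$ recorded in the earlier lemma. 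Each application of $E_\ell$ to a product of two distinct indeterminates is a routine two-term computation via the product rule.

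The crux of the argument is the rank computation: I would show that the resulting $6 \times 4$ matrix has rank $4$, equivalently that its columns are linearly independent, so that $\mathcal{E}_2$ is injective and its kernel is $\{0\}$. This is the step corresponding to the claim in the introduction that the degree-$2$ matrix has rank $4$. Since the matrix is small and has integer entries, exhibiting a $4 \times 4$ minor with nonzero determinant, or row-reducing to find a pivot in each column, settles the matter directly; I do not anticipate a genuine obstacle here, only the bookkeeping of writing the entries correctly. Concluding, the kernel of $\mathcal{E}_2$ is trivial, and since by the relevant lemma the invariant polynomials in $P_2$ are precisely this kernel, there are no nonzero invariant polynomials in degree $2$. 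I would also note as a sanity check that this is consistent with Lemma \ref{lemmaevendegree}, which already rules out odd degrees, and with the expectation that the lowest-degree invariant is Cayley's hyperdeterminant in degree $4$.
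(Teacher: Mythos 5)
Your proposal follows exactly the paper's argument: enumerate the four antipodal-pair monomials spanning $W(2;0,0,0)$, the two-dimensional target weight spaces giving a $6 \times 4$ matrix $[\mathcal{E}_2]$, and conclude from the full rank of that matrix that the kernel, hence the space of invariants, is zero. This matches the paper's proof in both strategy and detail, so there is nothing to add.
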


\begin{proof}
A basis of the zero weight space $W(2;0,0,0)$ consists of four monomials,
  \[
  00011000 \qquad 
  00100100 \qquad 
  01000010 \qquad 
  10000001.
  \] 
which label the columns of the matrix $[\mathcal{E}_2]$.
Each nonzero weight space $W(2;0,0,2)$, $W(2;0,2,0)$, $W(2;2,0,0)$ has a basis of two monomials
which label the rows of $[\mathcal{E}_2]$:
  \[
  \begin{array}{c}
  01100000 \\
  10010000 \\ \midrule
  01001000 \\
  10000100 \\ \midrule
  00101000 \\
  10000010 
  \end{array}
  \qquad\qquad
  [\mathcal{E}_2]
  =
  \left[
  \begin{array}{cccc}
  0 & 1 & 1 & 0 \\
  1 & 0 & 0 & 1 \\ \midrule
  1 & 0 & 1 & 0 \\
  0 & 1 & 0 & 1 \\ \midrule
  1 & 1 & 0 & 0 \\
  0 & 0 & 1 & 1
  \end{array}
  \right]
  \]
The matrix has full rank, and so its nullspace is $\{0\}$.
\end{proof}

  \begin{figure}
  $
  \begin{array}{cccccc}
  00022000 &\; 
  00111100 &\; 
  00200200 &\;
  01011010 &\; 
  01100110 &\; 
  01101001
  \\ 
  02000020 &\; 
  10010110 &\; 
  10011001 &\; 
  10100101 &\; 
  11000011 &\; 
  20000002
  \end{array}
  $
  \caption{Monomial basis for zero weight space $W(4;0,0,0)$}
  \label{basisdomain}
  \bigskip
  \[
  \begin{array}{c}
  01111000 \\ 
  01200100 \\ 
  02100010 \\ 
  10021000 \\ 
  10110100 \\ 
  11010010 \\ 
  11100001 \\ 
  20010001 \\ \midrule
  01012000 \\ 
  01101100 \\ 
  02001010 \\ 
  10011100 \\ 
  10100200 \\ 
  11000110 \\ 
  11001001 \\ 
  20000101 \\ \midrule
  00112000 \\ 
  00201100 \\ 
  01101010 \\ 
  10011010 \\ 
  10100110 \\ 
  10101001 \\ 
  11000020 \\ 
  20000011 
  \end{array}
  \qquad \qquad
  \left[
  \begin{array}{cccccccccccc}
  . & 1 & . & 1 & . & 1 & . & . & . & . & . & . \\ 
  . & . & 2 & . & 1 & . & . & . & . & . & . & . \\ 
  . & . & . & . & 1 & . & 2 & . & . & . & . & . \\ 
  2 & . & . & . & . & . & . & . & 1 & . & . & . \\ 
  . & 1 & . & . & . & . & . & 1 & . & 1 & . & . \\ 
  . & . & . & 1 & . & . & . & 1 & . & . & 1 & . \\ 
  . & . & . & . & . & 1 & . & . & . & 1 & 1 & . \\ 
  . & . & . & . & . & . & . & . & 1 & . & . & 2 \\ \midrule 
  2 & . & . & 1 & . & . & . & . & . & . & . & . \\ 
  . & 1 & . & . & 1 & 1 & . & . & . & . & . & . \\ 
  . & . & . & 1 & . & . & 2 & . & . & . & . & . \\ 
  . & 1 & . & . & . & . & . & 1 & 1 & . & . & . \\ 
  . & . & 2 & . & . & . & . & . & . & 1 & . & . \\ 
  . & . & . & . & 1 & . & . & 1 & . & . & 1 & . \\ 
  . & . & . & . & . & 1 & . & . & 1 & . & 1 & . \\ 
  . & . & . & . & . & . & . & . & . & 1 & . & 2 \\ \midrule
  2 & 1 & . & . & . & . & . & . & . & . & . & . \\ 
  . & 1 & 2 & . & . & . & . & . & . & . & . & . \\ 
  . & . & . & 1 & 1 & 1 & . & . & . & . & . & . \\ 
  . & . & . & 1 & . & . & . & 1 & 1 & . & . & . \\ 
  . & . & . & . & 1 & . & . & 1 & . & 1 & . & . \\ 
  . & . & . & . & . & 1 & . & . & 1 & 1 & . & . \\ 
  . & . & . & . & . & . & 2 & . & . & . & 1 & . \\ 
  . & . & . & . & . & . & . & . & . & . & 1 & 2
  \end{array}
  \right]
  \]
  \caption{The matrix $[\mathcal{E}_4]$}
  \label{degree4matrix}
  \bigskip
  \[
  \left[
  \begin{array}{rrrrrrrrrrrr}
  1 & . & . & . & . & . & . & . & . & . & . & -1 \\
  . & 1 & . & . & . & . & . & . & . & . & . &  2 \\
  . & . & 1 & . & . & . & . & . & . & . & . & -1 \\
  . & . & . & 1 & . & . & . & . & . & . & . &  2 \\
  . & . & . & . & 1 & . & . & . & . & . & . &  2 \\
  . & . & . & . & . & 1 & . & . & . & . & . & -4 \\
  . & . & . & . & . & . & 1 & . & . & . & . & -1 \\
  . & . & . & . & . & . & . & 1 & . & . & . & -4 \\
  . & . & . & . & . & . & . & . & 1 & . & . &  2 \\
  . & . & . & . & . & . & . & . & . & 1 & . &  2 \\
  . & . & . & . & . & . & . & . & . & . & 1 &  2
  \end{array}
  \right]
  \]
  \caption{The row canonical form of $[\mathcal{E}_4]$}
  \label{degree4matrixrcf}
  \end{figure}

\begin{theorem}
In degree 4, the space of invariant polynomials has dimension 1;
every invariant is a scalar multiple of Cayley's hyperdeterminant $C$.
\end{theorem}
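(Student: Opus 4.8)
The plan is to invoke the reduction established above: the invariant polynomials in $P_4$ coincide with the kernel of the linear map $\mathcal{E}_4$. Since the zero weight space $W(4;0,0,0)$ has the monomial basis of size $12$ listed in Figure \ref{basisdomain}, while each of the three target weight spaces $W(4;2,0,0)$, $W(4;0,2,0)$, $W(4;0,0,2)$ has a monomial basis of size $8$, the map $\mathcal{E}_4$ is represented by a matrix $[\mathcal{E}_4]$ of size $24 \times 12$. The entire statement thus reduces to computing this matrix and determining its rank.

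First I would assemble the three horizontal blocks of $[\mathcal{E}_4]$ by applying $E_1$, $E_2$, $E_3$ to each of the twelve basis monomials of $W(4;0,0,0)$. Using the derivation rule together with the action of each $E_\ell$ on the basis of $M_{2,2,2}(\mathbb{C})$, the operator $E_1$ replaces a factor $x_{1jk}$ by $x_{0jk}$ (weighted by its exponent) and annihilates factors $x_{0jk}$, thereby raising the $H_1$-weight from $0$ to $2$; the operators $E_2$ and $E_3$ behave analogously in the second and third directions. Recording the coefficients of each resulting polynomial in the ordered monomial bases of the three nonzero weight spaces yields the three $8 \times 12$ blocks displayed in Figure \ref{degree4matrix}.

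Next I would row-reduce $[\mathcal{E}_4]$ to the row canonical form of Figure \ref{degree4matrixrcf}. The decisive numerical fact is that the rank equals $11$ rather than $12$; by rank-nullity this gives $\dim \ker \mathcal{E}_4 = 12 - 11 = 1$, so the space of degree-$4$ invariants is exactly one-dimensional. I expect this rank computation to be the real content of the proof, since a rank of $12$ would leave no invariants at all, whereas a smaller rank would yield more than one.

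Finally I would read off the unique (up to scalar) nullspace vector from the row canonical form: setting the single free coordinate equal to $1$ and solving for the eleven pivot coordinates gives $(1,-2,1,-2,-2,4,1,4,-2,-2,-2,1)$ relative to the basis of Figure \ref{basisdomain}. Translating this vector back into a polynomial, I would verify that it agrees term-by-term with the hyperdeterminant $C$ of Definition \ref{CHdefinition}; in particular, the coordinates $1$, $-2$, $4$ reproduce the diagonal, square, and tetrahedron contributions described after that definition. The main obstacle is therefore not conceptual but computational: the accurate evaluation of the thirty-six images $E_\ell \cdot m$ (for $m$ a basis monomial) and the correct reduction of the resulting integer matrix. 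Once the rank is confirmed to be $11$, the identification of the kernel with $C$ is immediate.
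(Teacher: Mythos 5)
Your proposal is correct and follows essentially the same route as the paper: reduce to computing the $24\times 12$ matrix $[\mathcal{E}_4]$, row-reduce to find rank $11$, and identify the one-dimensional nullspace with $C$ (your nullspace vector $(1,-2,1,-2,-2,4,1,4,-2,-2,-2,1)$ does match the coefficients of $C$ against the ordered basis of Figure \ref{basisdomain}). The paper's proof is exactly this computation, presented via Figures \ref{degree4matrix} and \ref{degree4matrixrcf}.
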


\begin{proof}
A basis of the zero weight space $W(4;0,0,0)$ consists of the 12 monomials in Figure \ref{basisdomain}.
Each nonzero weight space $W(4;0,0,2)$, $W(4;0,2,0)$, $W(4;2,0,0)$ has a basis of 8 monomials; 
see Figure \ref{degree4matrix}, which also displays 
the $24 \times 12$ matrix $[\mathcal{E}_4]$ (we use dot for zero).
Figure \ref{degree4matrixrcf} gives the row canonical form of $[\mathcal{E}_4]$
(we omit zero rows).
The rank is 11, and Cayley's hyperdeterminant is a basis of the nullspace.
\end{proof}

\begin{corollary} \label{corollarylowerbound}
The dimension of the space of invariant polynomials is at least 1
in each degree $d$ congruent to 0 modulo 4.
\end{corollary}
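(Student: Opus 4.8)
The plan is to produce a nonzero invariant in each degree $d = 4m$ by taking powers of Cayley's hyperdeterminant, using the fact that the invariants form a subalgebra of $P$. First I would recall that $\mathfrak{sl}_2(\mathbb{C})^3$ acts on $P$ by derivations, so that $D \cdot (fg) = (D \cdot f) g + f (D \cdot g)$ for every $D$. Consequently, if $f$ and $g$ are both annihilated by every element of $\mathfrak{sl}_2(\mathbb{C})^3$, then so is $fg$. This shows that the invariant polynomials are closed under multiplication and form a graded subalgebra of $P$.

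Next I would invoke the preceding theorem, which established that Cayley's hyperdeterminant $C$ is a nonzero invariant polynomial of degree $4$. By the subalgebra property just noted, the power $C^m$ is an invariant polynomial of degree $4m$ for every positive integer $m$. Since $P$ is a polynomial ring over $\mathbb{C}$, it is an integral domain, and therefore $C \neq 0$ forces $C^m \neq 0$. Hence $C^m$ is a nonzero invariant in degree $4m$, so the space of invariants in that degree has dimension at least $1$. Together with the trivial case $d = 0$ (the constants are invariant), this covers every degree $d$ congruent to $0$ modulo $4$.

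There is essentially no obstacle in this argument: the only two facts required are the derivation rule for the Lie algebra action, which was already used to define and compute the action on monomials, and the elementary observation that a polynomial ring over a field has no zero divisors. The content of the corollary lies entirely in the existence of the degree-$4$ invariant $C$ furnished by the theorem; the passage to higher degrees is immediate once one knows the invariants form a subalgebra.
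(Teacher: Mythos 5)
Your proposal is correct and follows the same route as the paper: the paper's one-sentence proof simply notes that the powers $C^e$ furnish an invariant in each degree $4e$. You have merely filled in the (entirely sound) details that the derivation rule makes the invariants a subalgebra and that $C^e \neq 0$ because $P$ is an integral domain.
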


\begin{proof}
The existence of Cayley's hyperdeterminant $C$ in degree 4
implies that there is at least one invariant polynomial $C^e$ in each degree $d = 4e$.
\end{proof}


\section{Dimension formulas for weight spaces} \label{sectiondimensionformulas}

Our next goal is to prove that there are no new invariants in higher degrees;
in other words, that every invariant is a polynomial in $C$.
To do this, we need to prove that the lower bound of Corollary \ref{corollarylowerbound}
is also an upper bound.
The first step is to obtain dimension formulas for certain weight spaces
in the representation of $\mathfrak{sl}_2(\mathbb{C})^3$ on
the space $P_d$ of homogeneous polynomials of degree $d$.

\begin{theorem} \label{theoremdimensions}
The dimension of the zero weight subspace $W(d;0,0,0)$ equals
  \allowdisplaybreaks
  \begin{alignat}{2}
  &\frac{1}{384} ( d + 4 )^2 ( d^2 + 8 d + 24 )
  &\qquad
  &\text{if $d \equiv 0$ (mod 4)},
  \label{000-0} \tag{000-0}
  \\
  &\frac{1}{384} ( d + 2 ) ( d + 6 ) ( d^2 + 8 d + 28 )
  &\qquad
  &\text{if $d \equiv 2$ (mod 4)}.
  \label{000-2} \tag{000-2}
  \\
\intertext{The dimensions of $W(d;2,0,0)$, $W(d;0,2,0)$ and $W(d;0,0,2)$ equal}
  &\frac{1}{384} d ( d + 4 )^2 ( d + 8 )
  &\qquad
  &\text{if $d \equiv 0$ (mod 4)},
  \label{200-0} \tag{200-0}
  \\
  &\frac{1}{384} ( d + 2 ) ( d + 6 ) ( d^2 + 8 d + 4 )
  &\qquad
  &\text{if $d \equiv 2$ (mod 4)}.
  \label{200-2} \tag{200-2}
  \\
\intertext{The dimensions of $W(d;2,2,0)$, $W(d;2,0,2)$ and $W(d;0,2,2)$ equal}
  &\frac{1}{384} d ( d + 4 ) ( d^2 + 12 d + 8 )
  &\qquad
  &\text{if $d \equiv 0$ (mod 4)},
  \label{220-0} \tag{220-0}
  \\
  &\frac{1}{384} ( d + 2 ) ( d^3 + 14 d^2 + 28 d - 24 )
  &\qquad
  &\text{if $d \equiv 2$ (mod 4)}.
  \label{220-2} \tag{220-2}
  \\
\intertext{The dimension of $W(d;2,2,2)$ equals}
  &\frac{1}{384} d ( d^3 + 16 d^2 + 32 d + 32 )
  &\qquad
  &\text{if $d \equiv 0$ (mod 4)},
  \label{222-0} \tag{222-0}
  \\
  &\frac{1}{384} ( d + 2 ) ( d^3 + 14 d^2 + 4 d + 24 )
  &\qquad
  &\text{if $d \equiv 2$ (mod 4)}.
  \label{222-2} \tag{222-2}
  \end{alignat}
In all cases, the dimension is 0 if $d$ is odd.
\end{theorem}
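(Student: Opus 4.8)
The plan is to realize every weight-space dimension as a coefficient in one multivariate generating function, extract an explicit rational function of a single variable in each case, and read off the stated quasi-polynomials from its partial-fraction expansion. Since $P_d=S^d(M_{2,2,2}(\mathbb{C}))$ and the eight monomials $x_{ijk}$ are simultaneous eigenvectors for $H_1,H_2,H_3$ of weights $((-1)^i,(-1)^j,(-1)^k)$, the character generating function is
\[
F(q,s,t,u)=\prod_{i,j,k\in\{0,1\}}\frac{1}{1-q\,s^{(-1)^i}t^{(-1)^j}u^{(-1)^k}},
\]
with $q$ tracking degree and $s,t,u$ tracking the three weights. Since the monomials diagonalize $H_1,H_2,H_3$ with the eigenvalues computed above, $\dim W(d;a,b,c)$ is the coefficient of $q^d s^a t^b u^c$ in $F$, equivalently the constant term in $s,t,u$ of $s^{-a}t^{-b}u^{-c}F$ after extracting the $q^d$ coefficient.

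I would first reduce the eight cases to four univariate rational functions $G_{a,b,c}(q)=\sum_{d\ge0}\dim W(d;a,b,c)\,q^d$ for $(a,b,c)\in\{(0,0,0),(2,0,0),(2,2,0),(2,2,2)\}$; the remaining cases follow from the $S_3$-symmetry permuting $s,t,u$, which is the automorphism of $\mathfrak{sl}_2(\mathbb{C})^3$ permuting the summands mentioned after the definition of $[\mathcal{E}_d]$. To obtain each $G_{a,b,c}$ I would carry out the constant-term extraction in $s,t,u$ by iterated residues: grouping the eight factors by the exponent of $u$ gives four factors $1/(1-\alpha u)$ (those with $k=0$) and four factors $1/(1-\beta u^{-1})$ (those with $k=1$), and summing residues at the poles inside the contour collapses the $u$-dependence; repeating for $t$ and then $s$ leaves a rational function of $q$. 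The surviving denominators are products of $(1-q)$, $(1+q)$ and $(1+q^2)$, arising from coincidences among the monomials $q\,s^{\pm1}t^{\pm1}u^{\pm1}$, so that $G_{a,b,c}(q)$ has denominator dividing a power of $1-q^4$ and all its poles lie at fourth roots of unity.

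A partial-fraction decomposition of each $G_{a,b,c}(q)$ then converts this pole data into a quasi-polynomial in $d$: a pole of order $m$ at $q=1$ contributes a polynomial of degree $m-1$, while poles at $q=-1$ and $q=\pm i$ contribute the period-$2$ and period-$4$ corrections. These combine to give one polynomial for $d\equiv0$ and another for $d\equiv2\pmod 4$, and force the value $0$ for odd $d$, consistent with Lemma \ref{lemmaevendegree}. Collecting the coefficients over the common denominator $384=2^7\cdot3$ yields the displayed formulas.

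I expect the iterated residue computation to be the main obstacle: because $s,t,u$ are coupled in $F$, one must track carefully which poles fall inside each contour, and the intermediate rational functions are unwieldy. As a safeguard, each $G_{a,b,c}(q)$ is a priori a quasi-polynomial in $d$ of degree $4$ and period dividing $4$, so that its closed form is determined by finitely many values; the formulas can therefore be confirmed independently by enumerating the constrained $2\times2\times2$ arrays for small $d$ and checking agreement.
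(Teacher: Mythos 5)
Your approach is sound but genuinely different from the paper's. The paper proves these formulas by direct elementary enumeration: it slices the $2\times 2\times 2$ exponent array into its two $2\times 2$ layers, invokes a lemma stating that the number of $2\times 2$ non-negative integer matrices with prescribed row sums $r_0,r_1$ and column sums $c_0,c_1$ is $\min(r_0,r_1,c_0,c_1)+1$, observes that the two layers are counted independently so the array count is $(\min(w,y)+1)^2$ for each choice of marginals $(w,x,y,z)$, and then sums these squares of minima using a closed form for $\sum_{i,j\le k}\min(i,j)^2$, splitting into the cases $d\equiv 0$ and $d\equiv 2\pmod 4$ according to whether the half-degree $m=d/2$ is even or odd (the paper carries this out only for $W(d;0,0,0)$ and declares the other cases similar). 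You instead encode all eight dimensions as coefficients of the multivariate character series $F(q,s,t,u)$, extract constant terms by iterated residues, and read off the quasi-polynomials from the partial fractions; this is a correct and more systematic route (it handles all weights uniformly and explains structurally why the period divides $4$, since the only $s,t,u$-free products of the eight monomials contribute factors of $1-q^2$ and $1-q^4$), at the cost of a heavier computation that you rightly flag as the main obstacle. One point to tighten if you pursue the interpolation safeguard: for the nonzero weights $(a,b,c)\ne(0,0,0)$ the counting function is a coefficient of a vector partition function rather than an Ehrhart function of a dilated polytope, so quasi-polynomiality of degree $4$ and period dividing $4$ must be extracted from the pole structure of $G_{a,b,c}(q)$ itself (order of the poles at $\pm 1,\pm i$) rather than quoted from Ehrhart theory, and a priori it may hold only for $d$ large unless you check that the numerator degree is small enough; once that is pinned down, confirming the formulas on finitely many values of $d$ does complete the proof. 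The paper's argument buys a short, self-contained, purely combinatorial derivation; yours buys uniformity and a conceptual explanation of the modulus $4$.
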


Given non-negative integers $d$ (the degree) and $a, b, c$ (the weights),
we consider $2 \times 2 \times 2$ arrays $E = ( e_{ijk} )$ $(i,j,k \in \{0,1\})$
of non-negative integer exponents satisfying the following equations:
  \begin{align}
  e_{000} + e_{001} + e_{010} + e_{011} + e_{100} + e_{101} + e_{110} + e_{111}
  &=
  d,
  \label{D} \tag{D}
  \\
  ( e_{000} + e_{001} + e_{010} + e_{011} ) - ( e_{100} + e_{101} + e_{110} + e_{111} )
  &=
  a,
  \label{M1} \tag{M1}
  \\
  ( e_{000} + e_{001} + e_{100} + e_{101} ) - ( e_{010} + e_{011} + e_{110} + e_{111} )
  &=
  b,
  \label{M2} \tag{M2}
  \\
  ( e_{000} + e_{010} + e_{100} + e_{110} ) - ( e_{001} + e_{011} + e_{101} + e_{111} )
  &=
  c.
  \label{M3} \tag{M3}
  \end{align}
These equations hold if and only if the corresponding monomial belongs to the weight space $W(d;a,b,c)$;
that is, the number of arrays $E$ satisfying equations \eqref{D}--\eqref{M3} equals the dimension of $W(d;a,b,c)$.
Theorem \ref{theoremdimensions} gives formulas for these dimensions for certain values of $a, b, c$.
These formulas are polynomials of degree 4, as expected since we have eight exponents and four constraints.

\begin{lemma} \label{onedim}
Consider $2 \times 2$ matrices $( e_{ij} )$ with non-negative integer entries and
specified row sums $r_0, r_1$ and column sums $c_0, c_1$ satisfying $r_0 + r_1 = c_0 + c_1$:
  \begin{equation}
  \left[
  \begin{array}{cc}
  e_{00} & e_{01} \\
  e_{10} & e_{11}
  \end{array}
  \right],
  \qquad
  \begin{array}{l}
  e_{00} + e_{01} = r_0, \\
  e_{10} + e_{11} = r_1,
  \end{array}
  \qquad
  \begin{array}{l}
  e_{00} + e_{10} = c_0, \\
  e_{01} + e_{11} = c_1.
  \end{array}
  \label{one-slice}
  \end{equation}
The number of such matrices equals $\min( r_0, r_1, c_0, c_1 ) + 1$.
\end{lemma}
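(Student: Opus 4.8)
The plan is to exploit the fact that the four linear constraints in \eqref{one-slice} leave only one degree of freedom, and then to count the admissible integer values of that free parameter.

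First I would treat $e_{00}$ as the free parameter and solve the remaining equations for the other three entries: from the row and column sums I get $e_{01} = r_0 - e_{00}$, $e_{10} = c_0 - e_{00}$, and then $e_{11} = r_1 - e_{10} = r_1 - c_0 + e_{00}$. The point of the hypothesis $r_0 + r_1 = c_0 + c_1$ is that it makes the fourth equation $e_{01} + e_{11} = c_1$ redundant: substituting gives $(r_0 - e_{00}) + (r_1 - c_0 + e_{00}) = r_0 + r_1 - c_0 = c_1$ automatically. So every choice of $e_{00}$ yields a genuine solution of the full linear system, and since $e_{00}$ is itself an entry, distinct choices give distinct matrices.

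Next I would impose non-negativity of all four entries, which translates into four inequalities on the single integer $e_{00}$: namely $e_{00} \ge 0$, $e_{00} \le r_0$, $e_{00} \le c_0$, and $e_{00} \ge c_0 - r_1$. Hence $e_{00}$ ranges precisely over the integer interval $\max(0, c_0 - r_1) \le e_{00} \le \min(r_0, c_0)$, and the number of solutions is $\min(r_0, c_0) - \max(0, c_0 - r_1) + 1$. A quick check using $c_1 = r_0 + r_1 - c_0 \ge 0$ shows the lower bound never exceeds the upper bound, so this count is always at least $1$.

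The remaining task---and the only place requiring care---is the identity $\min(r_0, c_0) - \max(0, c_0 - r_1) = \min(r_0, r_1, c_0, c_1)$. I would verify this by splitting into the four cases determined by the signs of $r_0 - c_0$ and $c_0 - r_1$, in each case eliminating the maxima and minima and rewriting using $c_1 = r_0 + r_1 - c_0$. For instance, when $r_0 \le c_0$ and $c_0 \ge r_1$ the left side collapses to $r_0 + r_1 - c_0 = c_1$, and one checks that $c_1$ is then indeed the smallest of the four sums; the other three cases produce $r_0$, $r_1$, and $c_0$ respectively, each time the genuine minimum. This bookkeeping is routine, but it is precisely here that the symmetry of the final answer in $r_0, r_1, c_0, c_1$---which is not manifest in the asymmetric choice of $e_{00}$ as free parameter---finally emerges.
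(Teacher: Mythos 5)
Your proof is correct and is essentially the paper's argument: both identify the same one-parameter family of solutions (adding $u$ to the diagonal entries and subtracting it from the off-diagonal entries, i.e.\ varying $e_{00}$) and count the admissible integer interval. The only difference is at the end, where the paper first normalizes by symmetry (swapping rows, swapping columns, transposing) so that the count is immediately $r_0+1$ with $r_0$ the minimum, whereas you keep the general parameters and verify the identity $\min(r_0,c_0)-\max(0,c_0-r_1)=\min(r_0,r_1,c_0,c_1)$ by a four-case check; both routes are fine.
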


\begin{proof}
We have four variables and four constraints, but one dependence relation among the constraints,
so we expect a 1-dimensional solution set.
Without loss of generality, we can interchange the rows (resp.~columns)
and assume that $r_0 \le r_1$ (resp.~$c_0 \le c_1$);
we can also transpose the matrix and assume that $r_0 \le c_0$.
It is clear that since $c_1-r_0 \ge c_1-c_0 \ge 0$ we have the particular solution
  \[
  \left[
  \begin{array}{cc}
  e_{00} & e_{01} \\
  e_{10} & e_{11}
  \end{array}
  \right]
  =
  \left[
  \begin{array}{cc}
  0 & r_0 \\
  c_0 & c_1-r_0
  \end{array}
  \right].
  \]
If $u$ is any integer then we can preserve the constraints by adding $u$ to the diagonal entries
and subtracting $u$ from the off-diagonal entries:
  \[
  \left[
  \begin{array}{cc}
  e_{00} & e_{01} \\
  e_{10} & e_{11}
  \end{array}
  \right]
  =
  \left[
  \begin{array}{cc}
  u & r_0-u\\
  c_0-u & c_1-r_0+u
  \end{array}
  \right].
  \]
This is another solution if and only if $0 \le u \le r_0$.
Hence the number of solutions is $r_0 + 1 = \min( r_0, r_1, c_0, c_1 ) + 1$.
\end{proof}

\begin{lemma} \label{sum-min-square}
For any integer $k \ge 1$ we have
  \[
  \sum_{i=1}^k \sum_{j=1}^k \big( \min(i,j) \big)^2 
  = 
  \frac16 k ( k + 1 ) ( k^2 + k + 1 ).
  \]
\end{lemma}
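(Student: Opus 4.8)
The plan is to collapse the double sum to a single sum by grouping terms according to the common value $m = \min(i,j)$. For each $m \in \{1, \dots, k\}$, I would count the pairs $(i,j) \in \{1,\dots,k\}^2$ with $\min(i,j) = m$. The pairs with $\min(i,j) \ge m$ number $(k-m+1)^2$, and those with $\min(i,j) \ge m+1$ number $(k-m)^2$, so the number of pairs with $\min(i,j) = m$ exactly is the difference
\[
(k-m+1)^2 - (k-m)^2 = 2(k-m)+1.
\]
This reduces the claim to the single-variable identity
\[
\sum_{m=1}^k m^2 \big( 2(k-m) + 1 \big) = \frac16 k(k+1)(k^2+k+1).
\]

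Next I would expand the left-hand side as $(2k+1)\sum_{m=1}^k m^2 - 2\sum_{m=1}^k m^3$ and substitute the classical power-sum formulas $\sum_{m=1}^k m^2 = \tfrac16 k(k+1)(2k+1)$ and $\sum_{m=1}^k m^3 = \tfrac14 k^2(k+1)^2$. Factoring out $\tfrac16 k(k+1)$ leaves the bracket $(2k+1)^2 - 3k(k+1)$, which simplifies to $k^2 + k + 1$; this yields the desired right-hand side.

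The computation involves no genuine obstacle: the one point requiring care is the level-set count, where one must correctly handle the boundary case $m = k$ (there the count $2(k-m)+1 = 1$ correctly records the single pair $(k,k)$). As an alternative, the identity could be established by induction on $k$, since writing $S(k)$ for the left-hand side one has the increment
\[
S(k) - S(k-1) = k^2 + 2 \sum_{m=1}^{k-1} m^2,
\]
and one checks that the proposed closed form has the same first difference together with the correct value at $k=1$. The level-set reduction is more direct, however, since it avoids treating the base case and the inductive algebra separately.
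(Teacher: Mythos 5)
Your proof is correct, and your primary argument takes a genuinely different route from the paper's. You decompose the double sum over the level sets $\{(i,j) : \min(i,j) = m\}$, count each level set as $(k-m+1)^2 - (k-m)^2 = 2(k-m)+1$, and then finish with the classical power-sum formulas for $\sum m^2$ and $\sum m^3$; the algebra checks out, including the factorization $(2k+1)^2 - 3k(k+1) = k^2+k+1$. The paper instead proceeds by induction on $k$, writing the $(k{+}1)\times(k{+}1)$ sum as the $k \times k$ sum plus the contributions of the new row and new column, each of which is evaluated with only the sum-of-squares formula; this is essentially the "alternative" you sketch at the end via the first difference $S(k)-S(k-1) = k^2 + 2\sum_{m=1}^{k-1} m^2$. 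Your level-set approach buys a closed-form, induction-free derivation at the cost of invoking the sum-of-cubes identity; the paper's induction needs only the sum of squares but requires verifying the base case and carrying out the inductive algebra. Either is a complete proof.
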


\begin{proof}
By induction on $k$; the result is clear for $k = 1$.
We have
  \begin{align*}
  &
  \sum_{i=1}^{k+1} \sum_{j=1}^{k+1} \big( \min(i,j) \big)^2
  \\
  &=
  \sum_{i=1}^k \sum_{j=1}^k \big( \min(i,j) \big)^2
  +
  \sum_{i=1}^{k+1} \big( \min( i, k{+}1 ) \big)^2
  +
  \sum_{j=1}^k \big( \min( k{+}1, j ) \big)^2
  \\
  &=
  \frac16 k (k{+}1) (k^2{+}k{+}1)
  +
  \frac16 (k{+}1) (k{+}2) (2k{+}3)
  +
  \frac16 k (k{+}1) (2k{+}1)
  \\
  &=
  \frac16 ( k + 1 ) ( k + 2 ) ( k^2 + 3k + 3 ),
  \end{align*}
using the formula for the sum of the squares from 1 to $k{+}1$.
\end{proof}

We now come to the proof of Theorem \ref{theoremdimensions}.
We prove the first two equations \eqref{000-0} and \eqref{000-2}; 
the proofs of the others are similar but slightly more complicated,
and the details are not particularly enlightening.

\begin{proof}
Equations \eqref{M1}--\eqref{M3} imply that $d$ is even, since if $a = b = c = 0$ then
each of the sums in parentheses equals $d/2$.
Hence we assume that $d = 2m$.

In equations \eqref{D}--\eqref{M3} we write $w, x, y, z$ for the row and column sums
of the $2 \times 2$ slice $( e_{0jk} )$ with $i = 0$.
We then have $w + x = m$ and $y + z = m$, and
  \begin{alignat*}{2}
  e_{000} + e_{001} &= w,
  &\qquad \qquad
  e_{100} + e_{101} &= m-w,
  \\
  e_{010} + e_{011} &= x,
  &\qquad \qquad
  e_{110} + e_{111} &= m-x,
  \\
  e_{000} + e_{010} &= y,
  &\qquad \qquad
  e_{100} + e_{110} &= m-y,
  \\
  e_{001} + e_{011} &= z,
  &\qquad\qquad
  e_{101} + e_{111} &= m-z.
  \end{alignat*}
Suppose that $w \le x$ and $y \le z$.
Lemma \ref{onedim} shows that
  \begin{itemize}
  \item
  the number of $2 \times 2$ slices $( e_{0jk} )$ is $\min(w,y) + 1$, and
  \item
  the number of $2 \times 2$ slices $( e_{1jk} )$ is $\min( m{-}x, m{-}z ) + 1 = \min(w,y) + 1$.
  \end{itemize}
Hence the number of $2 \times 2 \times 2$ arrays is $( \min(w,y) + 1 )^2$.
Any solution with $w < x$ has a corresponding solution with $w > x$
obtained by interchanging the slices $( e_{i0k} )$ and $( e_{i1k} )$.
Any solution with $y < z$ has a corresponding solution with $y > z$
obtained by interchanging $( e_{ij0} )$ and $( e_{ij1} )$.

We first prove equation \eqref{000-2}: the case $d \equiv 2$ (mod 4).
We have $m = 2k{-}1$ where $k = (d{+}2)/4$.
Since $m$ is odd, we cannot have either $w = x$ or $y = z$; hence all solutions are doubly paired.
Thus the number of solutions is four times the number with $w < x$ and $y < z$,
and for this we apply Lemma \ref{sum-min-square}:
  \begin{align*}
  &
  4
  \sum_{w=0}^{k-1} \sum_{y=0}^{k-1} \big( \min(w,y) + 1 \big)^2
  =
  4
  \sum_{w=0}^{k-1} \sum_{y=0}^{k-1} \min( w{+}1, y{+}1 )^2
  \\
  = \;
  &
  \frac23 k (k+1) (k^2+k+1)
  =
  \frac{1}{384} (d+2)(d+6)(d^2+8d+28).
  \end{align*}
We next prove equation \eqref{000-0}: the case $d \equiv 0$ (mod 4).
We have $m = 2k$ where $k = d/4$.
In this case we must also consider $w = x$ and $y = z$, so we add
  \begin{align*}
  &
  2 \sum_{w=0}^{k-1} \min( w{+}1, k{+}1 )^2
  +
  2 \sum_{y=0}^{k-1} \min( k{+}1, y{+}1 )^2
  +
  \min( k{+}1, k{+}1 )^2
  \\
  = \;
  &
  \frac23 k (k+1) (2k+1)
  +
  (k+1)^2
  =
  \frac13 (k+1)(4k^2+5k+3),
  \end{align*}
to the previous result, obtaining
  \[
  \frac23 k (k+1) (k^2+k+1)
  +
  \frac13 (k+1)(4k^2+5k+3)
  =
  \frac{1}{384} (d+4)^2 (d^2+8d+24).
  \]
This completes the proof.
\end{proof}


\section{Inclusion-exclusion for subspaces}
\label{sectioninclusionexclusion}

We recall a familiar formula from elementary linear algebra. If $U_1$ and $U_2$
are finite-dimensional subspaces of a vector space then
  \begin{equation} \label{case n=2}
  \dim( \, U_1 + U_2 \, )
  =
  \dim( \, U_1 \, ) + \dim( \, U_2 \, ) - \dim( \, U_1 \cap U_2 \, ).
  \end{equation}
The next result generalizes equation \eqref{case n=2} to an arbitrary finite number of subspaces, 
and is similar to the combinatorial formula for inclusion-exclusion on finite sets.

\begin{lemma} \label{dimensioninequality}
If $U_1, \dots, U_n$ are finite-dimensional subspaces of a vector space then
  \[
  \dim\Big( \, \sum_{i=1}^n U_i \, \Big)
  \; \le \;
  \sum_{r=1}^n (-1)^{r+1}
  \!\!\!\!\!\!
  \sum_{1 \le i_1 < \cdots < i_r \le n}
  \!\!\!\!\!\!
  \dim( \, U_{i_1} \cap \cdots \cap U_{i_r} \, ),
  \]
where the inner sum on the right is over all $\binom{n}{r}$ subsets $\{ i_1,
\dots, i_r \} \subseteq \{ 1, \dots, n \}$.
\end{lemma}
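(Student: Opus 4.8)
The plan is to prove this inequality by induction on $n$, the number of subspaces. The base case $n = 1$ is trivial, and the case $n = 2$ is precisely the familiar formula \eqref{case n=2} (read as an equality, hence certainly an inequality). The inductive step is where the real work lies: assuming the bound holds for any collection of $n-1$ subspaces, I want to establish it for $n$ subspaces.

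First I would apply the $n=2$ formula to the pair of subspaces $U = \sum_{i=1}^{n-1} U_i$ and $U_n$, which gives
  \[
  \dim\Big( \sum_{i=1}^{n} U_i \Big)
  =
  \dim\Big( \sum_{i=1}^{n-1} U_i \Big)
  + \dim(U_n)
  - \dim\Big( \Big(\sum_{i=1}^{n-1} U_i\Big) \cap U_n \Big).
  \]
To the first term I would apply the inductive hypothesis directly. The crux is handling the intersection term: I would like to rewrite $\big(\sum_{i=1}^{n-1} U_i\big) \cap U_n$ and bound its dimension below. The natural move is to observe that $\sum_{i=1}^{n-1} (U_i \cap U_n) \subseteq \big(\sum_{i=1}^{n-1} U_i\big) \cap U_n$, so that
  \[
  \dim\Big( \Big(\sum_{i=1}^{n-1} U_i\Big) \cap U_n \Big)
  \;\ge\;
  \dim\Big( \sum_{i=1}^{n-1} (U_i \cap U_n) \Big).
  \]
Since this intersection term enters with a minus sign, the inequality direction is exactly what I need: replacing it by the smaller quantity on the right can only increase the upper bound, which is harmless. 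Then I would apply the inductive hypothesis a \emph{second} time, to the $n-1$ subspaces $U_i \cap U_n$ (for $i = 1, \dots, n-1$), obtaining an upper bound on $\dim\big(\sum_{i=1}^{n-1}(U_i \cap U_n)\big)$; but here I must be careful, because I want a \emph{lower} bound on this quantity to offset the minus sign. This sign-chasing is the main obstacle.

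The resolution is to note that the right-hand side of the claimed inequality, call it $B(U_1, \dots, U_n)$, satisfies its own inclusion-exclusion recursion that mirrors the $n=2$ splitting. Concretely, I would verify the purely algebraic identity
  \[
  B(U_1, \dots, U_n)
  =
  B(U_1, \dots, U_{n-1})
  + \dim(U_n)
  - B(U_1 \cap U_n, \dots, U_{n-1} \cap U_n),
  \]
by grouping the $2^n - 1$ terms on the alternating sum according to whether the index $n$ is present: terms omitting $n$ reproduce $B(U_1, \dots, U_{n-1})$, the singleton $\{n\}$ gives $\dim(U_n)$, and terms containing $n$ together with at least one other index reassemble, after factoring out the sign shift, into $-B(U_1 \cap U_n, \dots, U_{n-1} \cap U_n)$. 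With this identity in hand the induction closes cleanly: the two applications of the hypothesis bound the first and third terms of the dimension recursion in the correct directions, and combining them with the lower bound on the intersection dimension yields $\dim\big(\sum U_i\big) \le B(U_1,\dots,U_n)$.

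I expect the main obstacle to be precisely the bookkeeping of signs in the last step: because the subspace intersection term carries a minus sign, I must combine a \emph{subset} relation (giving $\ge$ for that dimension) with the inductive \emph{upper} bound (giving $\le$ for $B$) in a way that keeps all inequalities pointing consistently toward the desired $\le$. The combinatorial identity for $B$ is what forces everything to line up, so I would prove that identity carefully first and only then assemble the dimension estimates. No deep machinery is needed beyond the $n=2$ formula and the elementary fact that a sum of intersections sits inside the intersection of a sum.
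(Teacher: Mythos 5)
Your setup is, up to indexing, identical to the paper's: split off $U_n$ with the two-subspace formula, use $\sum_{i<n}(U_i\cap U_n)\subseteq\big(\sum_{i<n}U_i\big)\cap U_n$ to replace the intersection term, and then invoke induction. You also correctly isolate the one real obstacle: the term $\dim\big(\sum_{i<n}(U_i\cap U_n)\big)$ enters with a minus sign, so to close the induction you need a \emph{lower} bound for it by $B(U_1\cap U_n,\dots,U_{n-1}\cap U_n)$, whereas the inductive hypothesis supplies only an \emph{upper} bound. Your proposed resolution does not remove this gap. The recursion $B(U_1,\dots,U_n)=B(U_1,\dots,U_{n-1})+\dim(U_n)-B(U_1\cap U_n,\dots,U_{n-1}\cap U_n)$ is a correct combinatorial identity, but it only reorganizes the target; after substituting it you are still left needing $\dim\big(\sum_{i<n}(U_i\cap U_n)\big)\ge B(U_1\cap U_n,\dots,U_{n-1}\cap U_n)$, which is the reverse of what induction provides. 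The identity does not ``force everything to line up.''

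Moreover, no amount of bookkeeping can fix this, because the lemma as stated is false for $n\ge4$. Take $U_1,\dots,U_4$ to be four planes through the origin of $\mathbb{C}^3$ in general position, say the kernels of $x_1$, $x_2$, $x_3$, $x_1+x_2+x_3$: each has dimension $2$, each of the six pairwise intersections is a line, and every triple (hence the quadruple) intersection is $\{0\}$, so the right-hand side equals $8-6+0-0=2$, while $\dim(U_1+\cdots+U_4)=3$. The induction genuinely works only up to $n=3$: there the ``inner'' application of the hypothesis is to two subspaces, for which \eqref{case n=2} is an equality, so the sign reversal never occurs. For what it is worth, the paper's own proof contains exactly the same unaddressed step (``we apply the inductive hypothesis to the last two sums''), so you have reproduced the published argument faithfully, flaw included --- and you at least flagged the obstacle before waving it away. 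The saving grace for the paper is that it only ever applies the lemma with $n=3$, where both the statement and the argument are correct; a careful writeup should either restrict the lemma to $n\le 3$ or prove only the $n=3$ case directly.
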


\begin{proof}
The statement is false if the inequality is replaced by an equality:
consider three distinct lines through the origin in the plane.
The proof is by induction on $n$.  The statement is clear for $n \le 2$. We
assume the statement for $n$ and prove it for $n{+}1$. Using equation 
\eqref{case n=2} we obtain
  \begin{align*}
  \dim\Big( \, \sum_{i=1}^{n+1} U_i \, \Big)
  &=
  \dim\Big( \, \big( \sum_{i=1}^n U_i \big) + U_{n+1} \, \Big)
  \\
  &=
  \dim\Big( \sum_{i=1}^n U_i \Big)
  +
  \dim( U_{n+1} )
  -
  \dim\Big( \, \big( \sum_{i=1}^n U_i \big) \cap U_{n+1} \, \Big).
  \end{align*}
Observing that
  \[
  \dim\Big( \, \big( \sum_{i=1}^n U_i \big) \cap U_{n+1} \, \Big)
  \; \ge \;
  \dim\Big( \, \sum_{i=1}^n \big( \, U_i \cap U_{n+1} \, \big) \, \Big),
  \]
we obtain
  \[
  \dim\Big( \, \sum_{i=1}^{n+1} U_i \, \Big)
  \; \le \;
  \dim\Big( \sum_{i=1}^n U_i \Big)
  +
  \dim( U_{n+1} )
  -
  \dim\Big( \, \sum_{i=1}^n \big( \, U_i \cap U_{n+1} \, \big) \, \Big).
  \]
We apply the inductive hypothesis to the last two sums, obtaining
  \begin{align*}
  &
  \sum_{r=1}^n (-1)^{r+1}
  \!\!\!\!\!\!
  \sum_{1 \le i_1 < \cdots < i_r \le n}
  \!\!\!\!\!\!
  \dim( \, U_{i_1} \cap \cdots \cap U_{i_r} \, )
  +
  \dim( U_{n+1} )
  \\
  &
  -
  \sum_{r=1}^n (-1)^{r+1}
  \!\!\!\!\!\!
  \sum_{1 \le i_1 < \cdots < i_r \le n}
  \!\!\!\!\!\!
  \dim\big( \, ( U_{i_1} \cap U_{n+1} ) \cap \cdots \cap ( U_{i_r} \cap U_{n+1} ) \, \big).
  \end{align*}
We separate the $r = 1$ terms of the first double sum, 
and simplify the second double sum using familiar properties of intersections:
  \begin{align*}
  &
  \sum_{1 \le i \le n}
  \dim( \, U_i \, )
  +
  \sum_{r=2}^n (-1)^{r+1}
  \!\!\!\!\!\!
  \sum_{1 \le i_1 < \cdots < i_r \le n}
  \!\!\!\!\!\!
  \dim( \, U_{i_1} \cap \cdots \cap U_{i_r} \, )
  +
  \dim( U_{n+1} )
  \\
  &
  -
  \sum_{r=1}^n (-1)^{r+1}
  \!\!\!\!\!\!
  \sum_{1 \le i_1 < \cdots < i_r \le n}
  \!\!\!\!\!\!
  \dim\big( U_{i_1} \cap \cdots \cap U_{i_r} \cap U_{n+1} \, ).
  \end{align*}
A slight rearrangement gives
  \begin{align*}
  &
  \sum_{1 \le i \le n+1}
  \dim( \, U_i \, )
  +
  \sum_{r=2}^n (-1)^{r+1}
  \!\!\!\!\!\!
  \sum_{1 \le i_1 < \cdots < i_r \le n}
  \!\!\!\!\!\!
  \dim( \, U_{i_1} \cap \cdots \cap U_{i_r} \, )
  \\
  &
  +
  \sum_{r=1}^n (-1)^{(r+1)+1}
  \!\!\!\!\!\!
  \sum_{1 \le i_1 < \cdots < i_r \le n}
  \!\!\!\!\!\!
  \dim\big( \, ( U_{i_1} \cap \cdots \cap U_{i_r} ) \cap U_{n+1} \, \big).
  \end{align*}
The first (resp. second) double sum corresponds to the subsets of size $r$
(resp. size $r{+}1$) of the set $\{1,\dots,n{+}1\}$ which exclude (resp.
include) $n{+}1$, so we obtain
  \[
  \sum_{r=1}^{n+1} (-1)^{r+1}
  \!\!\!\!\!\!
  \sum_{1 \le i_1 < \cdots < i_r \le n+1}
  \!\!\!\!\!\!
  \dim( \, U_{i_1} \cap \cdots \cap U_{i_r} \, ),
  \]
and this completes the proof.
\end{proof}

We now consider a reformulation of this problem, in which we have a positive integer $n$
and a collection of $2^n$ finite-dimensional vector spaces,
  \[
  \{ \, V_{i_1,i_2,\dots,i_n} \mid 0 \le i_1, i_2, \dots ,i_n \le 1 \, \},
  \]
corresponding to the vertices of an $n$-dimensional cube. 
We also have $n 2^{n-1}$ injective linear maps corresponding to the
edges of the cube,
  \[
  f^{(k)}_{i_1,\dots,\widehat{i_k},\dots,i_n} \colon
  V_{i_1,\dots,1,\dots,i_n} \longrightarrow V_{i_1,\dots,0,\dots,i_n},
  \]
where the hat indicates omission and the values of the indices are
  \[
  1 \le k \le n,
  \qquad
  (i_1,\dots,\widehat{i_k},\dots,i_n) \in \{0,1\}^{n-1}.
  \]
Given any two of these vector spaces, we assume that all compositions of linear
maps between the spaces give the same result; that is, the diagram is commutative.
We can therefore identify each space $V_{i_1,i_2,\dots,i_n}$ with
its image in $V_{0,0,\dots,0}$, and so all of the spaces
$V_{i_1,i_2,\dots,i_n}$ can be identified with subspaces of $V_{0,0,\dots,0}$.

We define $n$ vector spaces $U_1, \dots, U_n$ by starting at the vertex $(0,\dots,0)$ 
of the $n$-dimensional cube and following the $n$ edges to the vertices
  \[
  U_i = V_{0,\dots,1,\dots,0}
  \qquad
  (1 \le i \le n),
  \]
in which the subscripts on the right are 0 except for 1 in position $i$.
Given any $r$-element subset $\{ i_1, \dots, i_r \} \subseteq \{ 1, \dots, n \}$,
we write $\chi( i_1, \dots, i_r )$ for the element of $\{0,1\}^n$ which has 1 in 
positions $i_1, \dots, i_r$ and 0 elsewhere.  Our assumptions 
allow us to make the following identifications: 
  \[
  U_{i_1} \cap \cdots \cap U_{i_r}
  =
  V_{\chi( i_1, \dots, i_r )}.
  \]
Lemma \ref{dimensioninequality} then implies that
  \begin{equation} \label{newinequality}
  \begin{array}{l}
  \dim\Big( \, 
  \mathrm{im}\big( f_{0,\dots,0}^{(1)} \big)
  +
  \cdots
  +
  \mathrm{im}\big( f_{0,\dots,0}^{(n)} \big)
  \, \Big)
  \; \le \;
  \\
  \displaystyle{
  \sum_{r=1}^n (-1)^{r+1}
  \!\!\!\!\!\!
  \sum_{1 \le i_1 < \cdots < i_r \le n}
  \!\!\!\!\!\!
  \dim\big( \, V_{\chi( i_1, \dots, i_r )} \, \big).
  }
  \end{array}
  \end{equation}

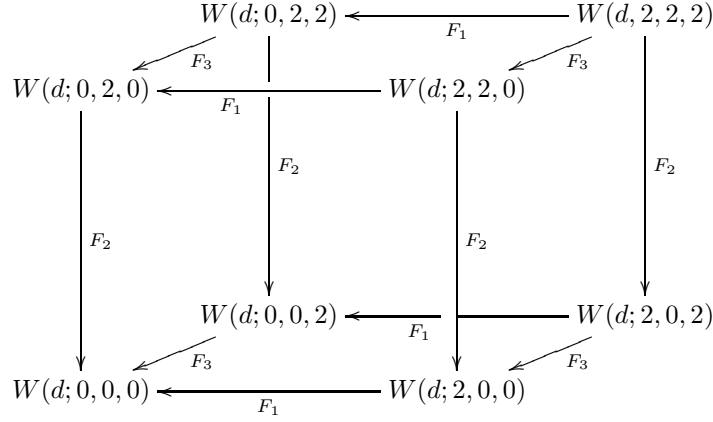
\begin{figure}
$
\begin{xy}
( 0, 0)*+{W(d;0,0,0)}="000";
(25,10)*+{W(d;0,0,2)}="002";
( 0,40)*+{W(d;0,2,0)}="020";
(50, 0)*+{W(d;2,0,0)}="200";
(25,50)*+{W(d;0,2,2)}="022";
(75,10)*+{W(d;2,0,2)}="202";
(50,40)*+{W(d;2,2,0)}="220";
(75,50)*+{W(d,2,2,2)}="222";
{\ar^(0.6){F_1} "220";"020"};
{\ar^{F_2} "020";"000"};
{\ar^{F_2} "220";"200"};
{\ar^{F_1} "200";"000"};
{\ar^(0.4){F_3} "022";"020"};
{\ar^(0.4){F_3} "222";"220"};
{\ar^(0.4){F_3} "002";"000"};
{\ar^(0.4){F_3} "202";"200"};
{\ar^{F_1} "222";"022"};
{\ar^{F_2} "222";"202"};
{\ar@{->}|>>>>>>>>>>>>>>{\hole}^(0.6){F_1} "202";"002"};
{\ar@{->}|>>>>>>>>>>>>>>>>>>>>>>>>>>>{\hole}^{F_2} "022";"002"}
\end{xy}
$
\caption{Linear maps among weight spaces in degree $d$}
\label{inclusionexclusion}
\end{figure}

\begin{example} \label{mainexample}
We consider $n = 3$ and identify the 8 vertices of the cube with the following weight spaces
in degree $d$ defined in Section \ref{sectiondimensionformulas}:
  \begin{align*}
  &
  W(d;0,0,0), \quad
  W(d;2,0,0), \quad
  W(d;0,2,0), \quad
  W(d;0,0,2),
  \\
  &
  W(d;2,2,0), \quad
  W(d;2,0,2), \quad
  W(d;0,2,2), \quad
  W(d;2,2,2).
  \end{align*}
The representation theory of $\mathfrak{sl}_2(\mathbb{C})$ shows that
the action of the basis elements $F_1, F_2, F_3$ on the homogeneous polynomials
of degree $d$ gives injective linear
maps between these weight spaces as illustrated in Figure \ref{inclusionexclusion}.
The invariant polynomials are the nonzero elements in the irreducible
summands $V(0) \otimes V(0) \otimes V(0)$, and the number of these summands
equals the codimension, in the zero weight space $W(d;0,0,0)$,
of the sum of the images of the weight spaces 
$W(d;2,0,0)$, $W(d;0,2,0)$, $W(d;0,0,2)$
under the actions of $F_1, F_2, F_3$ respectively.
That is,
  \begin{itemize}
  \item We start with the entire zero weight zero space $W(d;0,0,0)$.
  \item We factor out the images of vectors of weight (2,0,0) or (0,2,0) or
      (0,0,2) by the action of $F_1$ or $F_2$ or $F_3$.
  \item The vectors that come from weight
      (2,2,0) or (2,0,2) or (0,2,2) by the action of $F_1, F_2$ or $F_1,
      F_3$ or $F_2, F_3$ have then been factored out twice, so we must add those dimensions back in.
   \item But then the vectors that come from weight (2,2,2) by
       the action of $F_1, F_2, F_3$ must be factored out again.
   \end{itemize}
The dimension formulas from Section \ref{sectiondimensionformulas}
with equation \eqref{newinequality} give
  \begin{align*}
  &
  \dim W(d;0,0,0) - \dim W(d;2,0,0) - \dim W(d;0,2,0) - \dim W(d;0,0,2)
  \\
  &
  + \dim W(d;2,2,0) + \dim W(d;2,0,2) + \dim W(d;0,2,2) - \dim W(d;2,2,2)
  \\
  &=
  \begin{cases}
  1 &\text{if $n \equiv 0$ (mod 4)} \\
  0 &\text{otherwise}.
  \end{cases}
  \end{align*}
Combining this with Lemma \ref{dimensioninequality}, 
this gives another proof of Corollary \ref{corollarylowerbound}:
the dimension of the space of invariants is $\ge 1$ in degrees $d \equiv 0$ (mod 4).
\end{example}

\begin{theorem}
Every polynomial in the entries $x_{ijk}$ of the $2 \times 2 \times 2$ array $X = (x_{ijk})$ $(i, j, k = 0, 1)$,
which is invariant under changes of basis with determinant 1 along all the three directions,
is a polynomial in Cayley's hyperdeterminant.
\end{theorem}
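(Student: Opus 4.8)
The plan is to compute the Hilbert series of the graded algebra of invariants $R = \bigoplus_{d \ge 0} R_d$, where $R_d$ denotes the space of invariants in $P_d$, and to show it coincides with that of $\mathbb{C}[C]$. Since $C \in R_4$ and $P$ is an integral domain, the powers $1, C, C^2, \dots$ are linearly independent, so $\mathbb{C}[C] \subseteq R$ is a polynomial subalgebra with $\dim \mathbb{C}[C]_d = 1$ for $d \equiv 0 \pmod 4$ and $0$ otherwise. It therefore suffices to establish the matching upper bound $\dim R_d \le 1$ for $4 \mid d$ and $\dim R_d = 0$ for $4 \nmid d$: this forces $R_{4e} = \mathbb{C} C^e$ for every $e$, all other graded pieces vanish (odd degrees already by Lemma \ref{lemmaevendegree}), and hence $R = \mathbb{C}[C]$. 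The lower bound is supplied by Corollary \ref{corollarylowerbound} and Example \ref{mainexample}, so the entire content lies in the upper bound.

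The upper bound amounts to promoting the inequality \eqref{newinequality} of Lemma \ref{dimensioninequality}, applied to the cube of weight spaces of Figure \ref{inclusionexclusion}, to an equality. I would argue this representation-theoretically. Decompose $P_d = \bigoplus_\alpha N_\alpha$ into irreducible $\mathfrak{sl}_2(\mathbb{C})^3$-submodules $N_\alpha \cong V(a_\alpha) \otimes V(b_\alpha) \otimes V(c_\alpha)$. Since each $N_\alpha$ is a submodule, every $F_1, F_2, F_3$ preserves this decomposition, and the zero weight space splits as $W(d;0,0,0) = \bigoplus_\alpha L_\alpha$ with $L_\alpha = N_\alpha \cap W(d;0,0,0)$, a line when $a_\alpha, b_\alpha, c_\alpha$ are all even and $0$ otherwise. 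The invariants are exactly the lines $L_\alpha$ with $(a_\alpha, b_\alpha, c_\alpha) = (0,0,0)$, so $\dim R_d$ is the number of such summands.

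Next I would recognize the images of the lowering operators as coordinate subspaces for this decomposition. Within one summand the restriction $F_1 \colon N_\alpha \cap W(d;2,0,0) \to L_\alpha$ is the action of $F$ in the first tensor factor from weight $2$ to weight $0$; by the explicit formulas for $V(n)$ it is an isomorphism when $a_\alpha \ge 2$ and has zero domain when $a_\alpha = 0$. Hence $\mathrm{im}(F_1) = \bigoplus_{\alpha : a_\alpha \ge 2} L_\alpha$, and similarly for $F_2, F_3$. Setting $T_\alpha = \{\, \ell : \text{the $\ell$-th index of } N_\alpha \text{ is } \ge 2 \,\}$, the diagonal space $V_{\chi(S)}$ of Figure \ref{inclusionexclusion} is the coordinate subspace $\bigoplus_{\alpha : S \subseteq T_\alpha} L_\alpha$, so $\dim V_{\chi(S)} = \#\{\alpha : L_\alpha \ne 0,\ S \subseteq T_\alpha\}$. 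Substituting into the right side of \eqref{newinequality} and interchanging the two summations, each line $L_\alpha$ contributes $\sum_{\emptyset \ne S \subseteq T_\alpha} (-1)^{|S|+1} = 1 - 0^{|T_\alpha|}$, which is $1$ for $T_\alpha \ne \emptyset$ and $0$ for $T_\alpha = \emptyset$. Therefore the alternating sum equals $\#\{\alpha : L_\alpha \ne 0,\ T_\alpha \ne \emptyset\}$, which is exactly $\dim(\mathrm{im}(F_1) + \mathrm{im}(F_2) + \mathrm{im}(F_3))$; that is, \eqref{newinequality} is an equality here. I expect this to be the crux: Lemma \ref{dimensioninequality} is strict in general, and what rescues us is precisely that these subspaces share a common direct sum decomposition into the lines $L_\alpha$, so the subspace inclusion-exclusion collapses to ordinary finite inclusion-exclusion on the index sets $T_\alpha$ and is therefore exact.

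Finally, combining this equality with the dimension formulas of Theorem \ref{theoremdimensions} as assembled in Example \ref{mainexample} gives $\dim R_d = \dim W(d;0,0,0) - (\text{alternating sum}) = 1$ for $d \equiv 0 \pmod 4$ and $0$ otherwise. Together with the lower bound this determines the Hilbert series of $R$, so $R_{4e} = \mathbb{C} C^e$ for all $e$ and $R_d = 0$ for $4 \nmid d$. Since the $C^e$ are linearly independent, $R = \mathbb{C}[C]$ is a polynomial algebra on the single generator $C$, and every invariant polynomial is a polynomial in Cayley's hyperdeterminant.
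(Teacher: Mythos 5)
Your argument is correct and is essentially the paper's own proof: both decompose $P_d$ into irreducible $\mathfrak{sl}_2(\mathbb{C})^3$-summands, use the fact that each summand meets each relevant weight space in at most a line, and conclude that the subspace inclusion-exclusion \eqref{newinequality} collapses to exact set-theoretic inclusion-exclusion on the index sets $T_\alpha$, so the alternating sum of Example \ref{mainexample} computes the dimension of the space of degree-$d$ invariants exactly. You simply make explicit, via the coordinate-subspace description of the images and intersections of the $F_\ell$, the step that the paper's proof asserts is ``obviously an equality when all the dimensions are 1.''
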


\begin{proof}
It remains to use the representation theory of Lie algebras to show that 
inequality \eqref{newinequality} becomes in fact an equality in the situation 
of Example \ref{mainexample}.
We know that the space $P_d$ of homogeneous polynomials of degree $d$ is 
completely reducible as a representation of the semisimple Lie algebra
$\mathfrak{sl}_2(\mathbb{C})^3$, and that the irreducible summands 
are tensor products $V(a) \otimes V(b) \otimes V(c)$ of
irreducible representations of $\mathfrak{sl}_2(\mathbb{C})$.  Since the
weight spaces in the tensor factors have dimension 1 as representations
of $\mathfrak{sl}_2(\mathbb{C})$, it follows that the weight spaces in the 
tensor product have dimension 1 as representations of $\mathfrak{sl}_2(\mathbb{C})^3$.
Inequality \eqref{newinequality} is obviously an equality when all the dimensions
are 1, and this completes the proof.
\end{proof}


\section{General multidimensional arrays} \label{sectiongeneralization}

We consider a $k$-dimensional array of size $n_1 \times n_2 \times \cdots \times n_k$: 
  \[
  X = ( x_{i_1 i_2 \cdots i_k} )
  \qquad
  (
  1 \le i_1 \le n_1, \;
  1 \le i_2 \le n_2, \;
  \dots, \;
  1 \le i_k \le n_k).
  \]
(The smallest index is now 1, not 0.)
We consider an extension of determinants to these arrays,
using a combinatorial approach based on the representation theory of the special linear Lie algebra
$\mathfrak{sl}_n(\mathbb{C})$.
As usual we write $\mathbb{C}^{n_1}, \mathbb{C}^{n_2}, \dots, \mathbb{C}^{n_k}$
for the complex vector spaces with dimensions $n_1, n_2, \dots, n_k$ and standard bases
  \[
  e^{(1)}_{i_1} \, (i_1 = 1, \dots, n_1), \quad
  e^{(2)}_{i_2} \, (i_2 = 1, \dots, n_2), \quad
  \dots, \quad
  e^{(k)}_{i_k} \, (i_k = 1, \dots, n_k).
  \]
A tensor of order $k$ is an element of the tensor product
  \[
  \mathbb{C}^{n_1,n_2,\dots,n_k}
  =
  \mathbb{C}^{n_1} \otimes \mathbb{C}^{n_2} \otimes \cdots \otimes \mathbb{C}^{n_k}.
  \]

\begin{lemma}
Every element of $\mathbb{C}^{n_1,n_2,\dots,n_k}$ is a finite sum of elements of the form
  \[
  v_1 \otimes v_2 \otimes \cdots \otimes v_k
  \quad
  (v_1 \in \mathbb{C}^{n_1}, v_2 \in \mathbb{C}^{n_2}, \dots, v_k \in \mathbb{C}^{n_k}).
  \]
A basis for $\mathbb{C}^{n_1,n_2,\dots,n_k}$ over $\mathbb{C}$
consists of the $n_1 n_2 \cdots n_k$ simple tensors
  \[
  e_{i_1,i_2,\dots,i_k}
  =
  e^{(1)}_{i_1} \otimes
  e^{(2)}_{i_2} \otimes
  \cdots \otimes
  e^{(k)}_{i_k}.
  \]
Every tensor of order $k$ can be expressed uniquely in the form
  \[
  \sum_{i_1}^{n_1}
  \sum_{i_2}^{n_2}
  \cdots
  \sum_{i_k}^{n_k}
  x_{i_1,i_2,\dots,i_k}
  e_{i_1,i_2,\dots,i_k}
  \quad
  (x_{i_1,i_2,\dots,i_k} \in \mathbb{C}).
  \]
\end{lemma}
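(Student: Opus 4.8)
The plan is to establish the three assertions in turn, all of which are standard facts of multilinear algebra. The first assertion, that every element of $\mathbb{C}^{n_1,n_2,\dots,n_k}$ is a finite sum of simple tensors $v_1 \otimes \cdots \otimes v_k$, is immediate from the construction of the tensor product: by definition the tensor product is the $\mathbb{C}$-span of the simple tensors. I would record this directly from the defining universal property (or from whichever explicit construction of the iterated tensor product one prefers).

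For the second assertion I would show that the $n_1 n_2 \cdots n_k$ simple tensors $e_{i_1,i_2,\dots,i_k} = e^{(1)}_{i_1} \otimes \cdots \otimes e^{(k)}_{i_k}$ both span and are linearly independent. Spanning combines the first assertion with multilinearity: writing each factor as $v_\ell = \sum_{i_\ell=1}^{n_\ell} x^{(\ell)}_{i_\ell} e^{(\ell)}_{i_\ell}$ and expanding $v_1 \otimes \cdots \otimes v_k$ by distributing $\otimes$ across each sum, one obtains a linear combination of the $e_{i_1,i_2,\dots,i_k}$; since every tensor is a finite sum of such simple tensors, these basis tensors span.

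Linear independence is the only step requiring genuine argument, and it is the main obstacle. I would prove it using the universal property of the tensor product. For each fixed multi-index $(j_1,\dots,j_k)$, the map $\mathbb{C}^{n_1} \times \cdots \times \mathbb{C}^{n_k} \to \mathbb{C}$ sending $(v_1,\dots,v_k)$ to the product of the respective $j_\ell$-th coordinates is multilinear, hence factors through a linear functional $\bar\phi_{j_1,\dots,j_k}$ on $\mathbb{C}^{n_1,n_2,\dots,n_k}$ satisfying $\bar\phi_{j_1,\dots,j_k}(e_{i_1,i_2,\dots,i_k}) = \prod_{\ell=1}^k \delta_{i_\ell j_\ell}$. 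Applying these functionals to any relation $\sum x_{i_1,i_2,\dots,i_k} e_{i_1,i_2,\dots,i_k} = 0$ forces every coefficient to vanish. (Equivalently, one may argue by induction on $k$ using associativity of the tensor product together with the binary fact that tensoring a basis of $U$ with a basis of $W$ yields a basis of $U \otimes W$.) Once the $e_{i_1,i_2,\dots,i_k}$ are a linearly independent spanning set, the third assertion---unique expressibility of every tensor as $\sum x_{i_1,i_2,\dots,i_k} e_{i_1,i_2,\dots,i_k}$---is just the defining property of a basis, so nothing further is needed.
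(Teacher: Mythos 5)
Your proposal is correct and complete. Note, however, that the paper states this lemma without any proof at all: it is recorded as a standard fact of multilinear algebra (the section's purpose is to set up notation for the $k$-dimensional generalization), so there is no proof in the paper to compare yours against. Your argument is the standard one and fills the gap properly: spanning via multilinear expansion of simple tensors in the factor bases, and linear independence via the coordinate functionals $\bar\phi_{j_1,\dots,j_k}$ obtained from the universal property, with $\bar\phi_{j_1,\dots,j_k}(e_{i_1,\dots,i_k}) = \prod_{\ell=1}^k \delta_{i_\ell j_\ell}$. You are also right to flag the one point needing care --- that the universal property is being invoked for $k$-fold multilinear maps rather than bilinear ones --- and your parenthetical alternative (induction on $k$ via associativity and the binary basis fact) is exactly how one justifies it for the iterated tensor product.
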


A $k$-dimensional array consists of the coefficients
of a tensor of order $k$ with respect to the basis of simple tensors:
  \[
  X = ( x_{i_1,i_2,\dots,i_k} )
  \qquad
  ( i_1 = 1, \dots, n_1; \, i_2 = 1, \dots, n_2; \, \dots; \, i_k = 1, \dots, n_k ).
  \]
If $M_1, M_2, \dots, M_k$ are linear operators on $\mathbb{C}^{n_1}, \mathbb{C}^{n_2}, \dots \mathbb{C}^{n_k}$
then, with respect to the standard bases, we identify $M_\ell$ with
an $n_\ell \times n_\ell$ matrix for $\ell = 1, 2, \dots, k$:
  \[
  M_\ell = \big( m^{(\ell)}_{ij} \big)
  \qquad
  ( m^{(\ell)}_{ij} \in \mathbb{C}; \,
  i, j = 1, \dots, n_\ell ).
  \]
The action of a $k$-tuple of operators $M = (M_1, M_2, \dots, M_k)$ on a simple tensor in 
$\mathbb{C}^{n_1,n_2,\dots,n_k}$ is given by the equation
  \begin{equation}
  \label{actionsimpletensor}
  ( M_1, M_2, \dots, M_k ) \cdot ( v_1 \otimes v_2 \otimes \cdots \otimes v_k )
  =
  M_1 v_1 \otimes M_2 v_2 \otimes \cdots \otimes M_k v_k.
  \end{equation}
We introduce $n_1 n_2 \cdots n_k$ indeterminates corresponding to the entries of $X$:
  \[
  x_{i_1,i_2,\dots,i_k}
  \qquad
  (i_1 = 1, \dots, n_1; \; i_2 = 1, \dots, n_2; \; \dots; \; i_k = 1, \dots, n_k).
  \]
We consider the polynomial algebra in these indeterminates over $\mathbb{C}$:
  \[
  \mathbb{C}[ \, x_{i_1,i_2,\dots,i_k}
  \mid
  i_1 = 1, \dots, n_1; \,
  i_2 = 1, \dots, n_2; \,
  \dots; \,
  i_k = 1, \dots, n_k \, ].
  \]
For $\ell = 1, 2, \dots, k$ the action of $M_\ell$ on an indeterminate
corresponds to its action on the standard basis vectors in $\mathbb{C}^{n_\ell}$:
  \begin{equation}
  \label{actionindeterminate}
  M_\ell \, e^{(\ell)}_j = \sum_{i=1}^{n_\ell} m^{(\ell)}_{ij} e^{(\ell)}_i
  \implies
  M_\ell \cdot x_{j_1,\dots,j_\ell,\dots,j_k}
  =
  \sum_{i=1}^{n_\ell} m^{(\ell)}_{ij_\ell} x_{i_1,\dots,i,\dots,i_k}.
  \end{equation}
From this we obtain the action of $M = (M_1, M_2, \dots, M_k)$ on an indeterminate:
  \[
  ( M_1, M_2, \dots, M_k ) \cdot x_{j_1,j_2,\dots,j_k}
  =
  \sum_{i_1=1}^{n_1}
  \sum_{i_2=1}^{n_2}
  \cdots
  \sum_{i_k=1}^{n_k}
  m^{(1)}_{i_1 j_1}
  m^{(2)}_{i_2 j_2}
  \cdots
  m^{(k)}_{i_k j_k}
  x_{i_1,i_2,\dots,i_k}.
  \]
This action of $M = ( M_1, M_2, \dots, M_k )$ extends to an action on polynomials:
  \begin{align*}
  &
  M \cdot
  f\big( x_{1 1 \dots 1}, \dots, x_{j_1 j_2 \dots j_k}, \dots, x_{n_1 n_2 \dots n_k} \big)
  =
  \\
  &
  f\big( M \cdot x_{1 1 \dots 1}, \dots, M \cdot x_{j_1 j_2 \dots j_k}, \dots, M \cdot x_{n_1 n_2 \dots n_k} \big).
  \end{align*}

\begin{definition}
The polynomial $f \in \mathbb{C}[ \, x_{i_1,i_2,\dots,i_k}]$ is \textbf{invariant} if
  \[
  \det(M_\ell) = 1 \; (\ell = 1, \dots, k) 
  \implies 
  M \cdot f = f, \; M = ( M_1, M_2, \dots, M_k ).
  \]
\end{definition}

The $n \times n$ complex matrices of determinant 1, with the usual operation of matrix multiplication,
form the special linear group $SL_n(\mathbb{C})$.
Finite-dimensional representations of $SL_n(\mathbb{C})$ can be studied in terms of 
the Lie algebra $\mathfrak{sl}_n(\mathbb{C})$,
which consists of all $n \times n$ complex matrices of trace 0;
the bilinear product is the Lie bracket $[A,B] = AB - BA$.
The standard basis of $\mathfrak{sl}_n(\mathbb{C})$ consists of
  \begin{itemize}
  \item
the matrix units $U_{i,j}$ for $i \ne j$ with $(i,j)$ entry 1 and other entries 0,
  \item
the diagonal matrices $H_i = U_{i,i} - U_{i+1,i+1}$ for $i = 1, 2, \dots, n{-}1$.
  \end{itemize}
The simple root vectors are the matrix units $E_i = U_{i,i+1}$ for $i = 1, 2, \dots, n{-}1$.
The natural representation of $\mathfrak{sl}_n(\mathbb{C})$ is its action on $\mathbb{C}^n$ 
by matrix-vector multiplication.

\begin{lemma} \label{natural}
In the natural representation of $\mathfrak{sl}_n(\mathbb{C})$ we have
  \begin{align*}
  H_i \cdot e_j
  =
  \begin{cases}
  e_j &\text{if $j = i$} \\
  -e_j &\text{if $j = i{+}1$} \\
  0 &\text{otherwise},
  \end{cases}
  \qquad \qquad
  E_i \cdot e_j
  =
  \begin{cases}
  e_{j-1} &\text{if $j = i{+}1$} \\
  0 &\text{otherwise}.
  \end{cases}
  \end{align*}
\end{lemma}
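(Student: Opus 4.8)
The plan is to reduce the entire statement to one elementary fact about how matrix units act on standard basis vectors, and then verify the two formulas by inspection. Recall that $U_{a,b}$ denotes the matrix with a single $1$ in position $(a,b)$ and zeros elsewhere, while $e_j$ is the $j$-th standard basis vector of $\mathbb{C}^n$. Under matrix-vector multiplication one has
\[
U_{a,b} \cdot e_j = \delta_{b,j}\, e_a,
\]
because applying $U_{a,b}$ to $e_j$ selects the $j$-th column of $U_{a,b}$, which is $e_a$ when $j = b$ and the zero vector otherwise. Together with linearity of the representation, this single identity suffices to establish both formulas.

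First I would treat $H_i$. Writing $H_i = U_{i,i} - U_{i+1,i+1}$ and using linearity gives
\[
H_i \cdot e_j = U_{i,i} \cdot e_j - U_{i+1,i+1} \cdot e_j = \delta_{i,j}\, e_i - \delta_{i+1,j}\, e_{i+1}.
\]
Separating the three cases $j = i$, $j = i+1$, and $j \notin \{i, i+1\}$ produces $e_j$, $-e_j$, and $0$ respectively, which is exactly the claimed piecewise description. Next I would treat $E_i = U_{i,i+1}$; here the matrix-unit identity gives directly
\[
E_i \cdot e_j = \delta_{i+1,j}\, e_i,
\]
and since $e_i = e_{j-1}$ precisely when $j = i+1$, this vanishes unless $j = i+1$, in which case it equals $e_{j-1}$, as required.

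I do not expect any genuine obstacle: the argument is a direct verification once the matrix-unit action is recorded, and the only care needed is the bookkeeping of the Kronecker-delta index conditions. The computation is the $n$-dimensional counterpart of the explicit $\mathfrak{sl}_2(\mathbb{C})$ action described earlier in the paper, now phrased for the basis elements $H_i$ and $E_i$ of $\mathfrak{sl}_n(\mathbb{C})$.
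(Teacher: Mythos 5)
Your proof is correct: the identity $U_{a,b}\cdot e_j=\delta_{b,j}e_a$ together with $H_i=U_{i,i}-U_{i+1,i+1}$ and $E_i=U_{i,i+1}$ immediately yields both case analyses. The paper states this lemma without proof as a routine verification, and your direct matrix-unit computation is exactly the argument it leaves implicit.
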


We consider the action of the semisimple Lie algebra
  \begin{equation} \label{semisimple}
  \bigoplus_{\ell=1}^k \mathfrak{sl}_{n_\ell}(\mathbb{C})
  =
  \mathfrak{sl}_{n_1}(\mathbb{C}) \oplus
  \mathfrak{sl}_{n_2}(\mathbb{C}) \oplus
  \cdots \oplus
  \mathfrak{sl}_{n_k}(\mathbb{C}),
  \end{equation}
on its irreducible representation $\mathbb{C}^{n_1,n_2,\dots,n_k}$,
the tensor product of the natural representations of its simple summands.
For $\ell = 1, 2, \dots, k$ we write $H^{(\ell)}_i$, $E^{(\ell)}_i$ for
the elements $H_i$, $E_i \in \mathfrak{sl}_{n_\ell}(\mathbb{C})$.
Combining equations \eqref{actionsimpletensor} and \eqref{actionindeterminate} with
Lemma \ref{natural} we obtain the action of $H^{(\ell)}_i$ and $E^{(\ell)}_i$ on the indeterminates
$x_{j_1 j_2 \dots j_k}$.

\begin{lemma}
For $\ell = 1, 2, \dots, k$ and $i = 1, 2, \dots, n_\ell{-}1$ we have
  \begin{align*}
  H^{(\ell)}_i \cdot x_{j_1, j_2, \dots, j_k}
  &=
  \begin{cases}
  x_{j_1, j_2, \dots, j_k} &\text{if $j_\ell = i$} \\
  -x_{j_1, j_2, \dots j_k} &\text{if $j_\ell = i{+}1$} \\
  0 &\text{otherwise},
  \end{cases}
  \\
  E^{(\ell)}_i \cdot x_{j_1, j_2, \dots, j_k}
  &=
  \begin{cases}
  x_{j_1, j_2, \dots, j_\ell-1, \dots, j_k} &\text{if $j_\ell = i{+}1$} \\
  0 &\text{otherwise}.
  \end{cases}
  \end{align*}
\end{lemma}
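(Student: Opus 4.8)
The plan is to obtain both formulas by substituting the matrix entries of the standard generators $H_i$ and $E_i$ of $\mathfrak{sl}_{n_\ell}(\mathbb{C})$ into the general action formula \eqref{actionindeterminate}. Writing $p$ for the summation index (to avoid a clash with the fixed generator index $i$), that formula reads
  \[
  M_\ell \cdot x_{j_1,\dots,j_\ell,\dots,j_k}
  =
  \sum_{p=1}^{n_\ell} m^{(\ell)}_{p,j_\ell}\, x_{j_1,\dots,p,\dots,j_k},
  \]
so the action on a coordinate function replaces the $\ell$-th index by a summation index weighted by the $j_\ell$-th column of $M_\ell$. The entries $m^{(\ell)}_{p,q}$ are exactly the coefficients in the natural action $M_\ell \cdot e^{(\ell)}_q = \sum_p m^{(\ell)}_{p,q} e^{(\ell)}_p$, which for $H_i$ and $E_i$ are supplied by Lemma \ref{natural}. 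Thus each case of the present lemma is obtained by reading off a single column of the relevant generator matrix and letting the sum collapse.

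First I would treat $H^{(\ell)}_i$. By Lemma \ref{natural}, the $j_\ell$-th column of $H_i$ equals $+e_{j_\ell}$ when $j_\ell = i$, equals $-e_{j_\ell}$ when $j_\ell = i{+}1$, and is zero otherwise. Substituting these coefficients into the displayed formula, the sum reduces to a single term: we get $+x_{j_1,\dots,j_k}$ if $j_\ell = i$, we get $-x_{j_1,\dots,j_k}$ if $j_\ell = i{+}1$, and we get $0$ in all remaining cases. This is precisely the first case distinction in the statement.

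Next I would treat $E^{(\ell)}_i$. By Lemma \ref{natural}, the $j_\ell$-th column of $E_i$ is nonzero only when $j_\ell = i{+}1$, and in that case it equals $e_i = e_{j_\ell-1}$ with coefficient $+1$. Substituting, the sum collapses to $x_{j_1,\dots,j_\ell-1,\dots,j_k}$ when $j_\ell = i{+}1$ and to $0$ otherwise, which is the second case distinction. The computation is routine and has no genuine obstacle; the only point demanding care is the index bookkeeping in \eqref{actionindeterminate}, where the old value $j_\ell$ appears as the \emph{column} index of the generator while the row (summation) index becomes the \emph{new} value of the $\ell$-th index. Respecting this convention is exactly what turns the raising operator $E_i$ into the index-lowering substitution $j_\ell \mapsto j_\ell - 1$ on coordinate functions, completing the verification.
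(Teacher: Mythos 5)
Your proposal is correct and is exactly the argument the paper intends: the lemma is stated there as an immediate consequence of substituting the matrix entries of $H_i$ and $E_i$ from Lemma \ref{natural} into equation \eqref{actionindeterminate} (``Combining equations \eqref{actionsimpletensor} and \eqref{actionindeterminate} with Lemma \ref{natural} we obtain the action\dots''), with no further proof given. Your write-up simply makes that substitution explicit, and it correctly handles the one delicate point --- that $j_\ell$ enters as the \emph{column} index while the summation (row) index becomes the new $\ell$-th subscript, which is also why your rewritten form $\sum_p m^{(\ell)}_{p,j_\ell}\,x_{j_1,\dots,p,\dots,j_k}$ quietly repairs the index typo in the paper's display of \eqref{actionindeterminate}.
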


The action of a Lie algebra $L$ on a tensor product $V \otimes W$
of representations is given by the derivation rule:
  \[
  x \cdot ( v \otimes w ) = ( x \cdot v ) \otimes w + v \otimes ( x \cdot w )
  \qquad
  ( x \in L, \, v \in V, \, w \in W ).
  \]
We identify the $d$-th symmetric power $S^d V$ of the representation $V$
with the space of homogeneous polynomials of degree $d$ on a basis of $V$.
It follows by induction on $d$ that the action of $L$ on $S^d V$ is given by
the following equation:
  \begin{align*}
  &
  x \cdot ( v_1^{e_1} v_2^{e_2} \cdots v_p^{e_p} )
  =
  \sum_{i=1}^p v_1^{e_1} \cdots ( x \cdot v_i^{e_i} ) \cdots v_p^{e_p}
  \\
  = \;
  &
  \sum_{i=1}^p v_1^{e_1} \cdots \big( e_i v_i^{e_i-1} ( x \cdot v_i ) \big) \cdots v_p^{e_p}
  =
  \sum_{i=1}^p e_i \, v_1^{e_1} \cdots v_i^{e_i-1} \cdots v_p^{e_p} ( x \cdot v_i ).
  \end{align*}
We apply this to
  \[
  L
  =
  \bigoplus_{\ell=1}^k \mathfrak{sl}_{n_\ell}(\mathbb{C}),
  \qquad
  V
  =
  \bigoplus_{j_1=1}^{n_1} \bigoplus_{j_2=1}^{n_2} \cdots \bigoplus_{j_k=1}^{n_k}
  \mathbb{C} x_{j_1 j_2 \dots j_k}.
  \]
Some equations will be clearer if we write a monomial as follows:
  \[
  \prod_{j_1}^{n_1} \prod_{j_2}^{n_2} \cdots \prod_{j_k}^{n_k}
  x_{j_1 j_2 \dots j_k}^{e_{j_1 j_2 \dots j_k}}
  =
  x_{1 \dots 1}^{e_{1 \dots 1}}
  \cdots
  x_{j_1 \dots j_k}^{e_{j_1 \dots j_k}}
  \cdots
  x_{n_1 \dots n_k}^{e_{n_1 \dots n_k}}
  \]

\begin{lemma}
For $\ell = 1, 2, \dots, k$ and $i = 1, 2, \dots, n_\ell{-}1$ we have
  \begin{align*}
  &
  H^{(\ell)}_i \cdot
  \big(
  x_{1 \dots 1}^{e_{1 \dots 1}}
  \cdots
  x_{j_1 \dots j_k}^{e_{j_1 \dots j_k}}
  \cdots
  x_{n_1 \dots n_k}^{e_{n_1 \dots n_k}}
  \big)
  =
  \\
  &
  \sum_{j_1=1}^{n_1} \cdots \sum_{j_k=1}^{n_k}
  \big( \delta_{j_\ell,i} - \delta_{j_\ell,i+1} \big)
  e_{j_1 \dots j_\ell \dots j_k}
  \,
  x_{1 \dots 1}^{e_{1 \dots 1}}
  \cdots
  x_{j_1 \dots j_\ell \dots j_k}^{e_{j_1 \dots j_\ell \dots j_k}}
  \cdots
  x_{n_1 \dots n_k}^{e_{n_1 \dots n_k}},
  \\
  &
  E^{(\ell)}_i \cdot
  \big(
  x_{1 \dots 1}^{e_{1 \dots 1}}
  \cdots
  x_{j_1 \dots j_k}^{e_{j_1 \dots j_k}}
  \cdots
  x_{n_1 \dots n_k}^{e_{n_1 \dots n_k}}
  \big)
  =
  \\
  &
  \sum_{j_1=1}^{n_1} \cdots \sum_{j_k=1}^{n_k}
  \delta_{j_\ell,i+1}
  \,
  e_{j_1 \dots j_\ell \dots j_k}
  \,
  x_{1 \dots 1}^{e_{1 \dots 1}}
  \cdots
  x_{j_1 \dots j_\ell-1 \dots j_k}^{e_{j_1 \dots j_\ell-1 \dots j_k}+1}
  \cdots
  x_{j_1 \dots j_\ell \dots j_k}^{e_{j_1 \dots j_\ell \dots j_k}-1}
  \cdots
  x_{n_1 \dots n_k}^{e_{n_1 \dots n_k}},
  \end{align*}
where $\delta_{ij}$ is the Kronecker delta ($\delta_{ii} = 1$, $\delta_{ij} = 0$ for $i \ne j$).
\end{lemma}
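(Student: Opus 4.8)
The plan is to obtain both formulas by combining two ingredients already established in this section: the formula for the action of a Lie algebra element on the symmetric power, and the action of $H^{(\ell)}_i$ and $E^{(\ell)}_i$ on a single indeterminate from the immediately preceding lemma. Recall that the symmetric-power formula, derived above from the derivation rule, states that
\[
x \cdot \big( v_1^{e_1} \cdots v_p^{e_p} \big)
= \sum_{i=1}^p e_i \, v_1^{e_1} \cdots v_i^{e_i-1} \cdots v_p^{e_p} \, ( x \cdot v_i ),
\]
where the indeterminates $x_{j_1 \dots j_k}$, indexed by all $k$-tuples with $1 \le j_\ell \le n_\ell$, play the role of the $v_i$. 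The whole proof is then a substitution of the single-indeterminate actions into this formula, followed by index bookkeeping.

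For $H^{(\ell)}_i$ I would first rewrite the three-case single-indeterminate action from the previous lemma in the compact form
\[
H^{(\ell)}_i \cdot x_{j_1 \dots j_k} = \big( \delta_{j_\ell,i} - \delta_{j_\ell,i+1} \big) x_{j_1 \dots j_k}.
\]
Since $H^{(\ell)}_i$ scales each indeterminate, the factor $x_{j_1 \dots j_k}^{e-1}\,(H^{(\ell)}_i \cdot x_{j_1 \dots j_k})$ appearing in each summand of the symmetric-power formula collapses back to $x_{j_1 \dots j_k}^{e}$, so the monomial is unchanged and only the scalar $e_{j_1 \dots j_k}(\delta_{j_\ell,i}-\delta_{j_\ell,i+1})$ survives. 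Summing over all $(j_1,\dots,j_k)$ gives precisely the stated expression. This case is diagonal and presents no difficulty.

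For $E^{(\ell)}_i$ I would argue in the same way, starting from
\[
E^{(\ell)}_i \cdot x_{j_1 \dots j_k} = \delta_{j_\ell,i+1}\, x_{j_1 \dots j_\ell-1 \dots j_k},
\]
again a restatement of the preceding lemma. Substituting into the symmetric-power formula, only the summands with $j_\ell = i+1$ survive, and each such summand carries the coefficient $e_{j_1 \dots j_k}$, lowers the exponent of $x_{j_1 \dots j_\ell \dots j_k}$ by one, and raises the exponent of the shifted indeterminate $x_{j_1 \dots j_\ell-1 \dots j_k}$ by one. I expect the only real obstacle to be the bookkeeping: when both $x_{\dots j_\ell \dots}$ and its shift $x_{\dots j_\ell-1 \dots}$ already occur in the monomial, one must combine the exponent changes correctly so that the result matches the displayed product, in which the $(j_\ell{-}1)$-factor carries exponent $e_{j_1 \dots j_\ell-1 \dots j_k}+1$ and the $j_\ell$-factor carries exponent $e_{j_1 \dots j_\ell \dots j_k}-1$. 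This is routine index-chasing rather than a conceptual difficulty, and matching terms completes the proof.
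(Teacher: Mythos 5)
Your proposal is correct and follows exactly the route the paper intends: the lemma is presented as an immediate consequence of substituting the single-indeterminate actions of $H^{(\ell)}_i$ and $E^{(\ell)}_i$ from the preceding lemma into the symmetric-power derivation formula $x \cdot ( v_1^{e_1} \cdots v_p^{e_p} ) = \sum_i e_i \, v_1^{e_1} \cdots v_i^{e_i-1} \cdots v_p^{e_p} ( x \cdot v_i )$, with the diagonal case collapsing to an eigenvalue and the $E^{(\ell)}_i$ case shifting one exponent down and the neighbouring one up. The paper gives no further detail, so your write-up, including the Kronecker-delta repackaging of the case distinctions, is if anything more explicit than the original.
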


\begin{lemma} \label{weightlemma}
For every $\ell = 1, 2, \dots, k$ and $i = 1, 2, \dots, n_\ell{-}1$,
the monomial
  \[
  x_{1 \dots 1}^{e_{1 \dots 1}} \cdots x_{n_1 \dots n_k}^{e_{n_1 \dots n_k}},
  \]
is an eigenvector for $H^{(\ell)}_i$ with eigenvalue
  \[
  \sum_{j_1=1}^{n_1} \cdots \widehat{\sum_{j_\ell}} \cdots \sum_{j_k=1}^{n_k}
  e_{j_1 \dots i \dots j_k}
  -
  \sum_{j_1=1}^{n_1} \cdots \widehat{\sum_{j_\ell}} \cdots \sum_{j_k=1}^{n_k}
  e_{j_1 \dots i+1 \dots j_k},
  \]
where the hat denotes omission.
\end{lemma}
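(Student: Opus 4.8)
The plan is to specialize the formula for the action of $H^{(\ell)}_i$ on a monomial that was established in the preceding lemma. The decisive observation is that, unlike $E^{(\ell)}_i$, the operator $H^{(\ell)}_i$ alters no exponent: in that lemma's expression for $H^{(\ell)}_i \cdot (\text{monomial})$, every summand is a scalar multiple of the \emph{same} monomial, the summation index $(j_1,\dots,j_k)$ serving only to select which exponent $e_{j_1 \dots j_k}$ is extracted as a coefficient. Consequently the right-hand side factors as a single scalar times the monomial, so the monomial is at once seen to be an eigenvector, with eigenvalue
\[
\lambda = \sum_{j_1=1}^{n_1} \cdots \sum_{j_k=1}^{n_k} \big( \delta_{j_\ell,i} - \delta_{j_\ell,i+1} \big) e_{j_1 \dots j_k}.
\]

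Next I would simplify $\lambda$ by separating it into two sums according to the two Kronecker deltas and resolving each delta in turn. In the first sum the factor $\delta_{j_\ell,i}$ vanishes unless $j_\ell = i$, so the $j_\ell$-summation collapses and leaves a sum over the remaining indices with $j_\ell$ held fixed at $i$; this is precisely the expression carrying $\widehat{\sum_{j_\ell}}$ with subscript $i$ in position $\ell$. The second sum is treated identically with $j_\ell$ fixed at $i+1$, and enters with a minus sign. Combining the two gives exactly the claimed eigenvalue.

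I expect no genuine obstacle here: the argument is a direct specialization of the earlier formula together with the elementary identity $\sum_{j_\ell} \delta_{j_\ell,m} f(j_\ell) = f(m)$. The only point requiring care is the multi-index bookkeeping, namely checking that the hat notation $\widehat{\sum_{j_\ell}}$ faithfully records the omission of the $j_\ell$-summation once the Kronecker delta has been resolved, and that the two collapsed sums are assembled with the correct signs.
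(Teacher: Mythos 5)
Your proposal is correct and matches the paper's (implicit) argument: the paper states this lemma without a separate proof precisely because it is the immediate specialization of the preceding lemma's formula for $H^{(\ell)}_i$ acting on a monomial, obtained by noting that every summand is a multiple of the same monomial and then resolving the two Kronecker deltas, exactly as you describe. This also mirrors the proof the paper does write out for the analogous eigenvalue lemma in the $2 \times 2 \times 2$ case.
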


The space of homogeneous polynomials of degree $d$ has the basis
  \[
  x_{1 \dots 1}^{e_{1 \dots 1}} \cdots x_{n_1 \dots n_k}^{e_{n_1 \dots n_k}},
  \qquad
  \sum_{j_1=1}^{n_1} \cdots \sum_{j_k=1}^{n_k}
  e_{j_1 \dots j_k}
  =
  d.
  \]

\begin{definition} \label{zerodefinition}
A monomial
$x_{1 \dots 1}^{e_{1 \dots 1}} \cdots x_{n_1 \dots n_k}^{e_{n_1 \dots n_k}}$
has \textbf{weight zero} if it has eigenvalue 0 for every $H^{(\ell)}_i$
with $\ell = 1, 2, \dots, k$ and $i = 1, 2, \dots, n_\ell{-}1$;
that is,
  \[
  \sum_{j_1=1}^{n_1} \cdots \widehat{\sum_{j_\ell}} \cdots \sum_{j_k=1}^{n_k}
  e_{j_1 \dots i \dots j_k}
  =
  \sum_{j_1=1}^{n_1} \cdots \widehat{\sum_{j_\ell}} \cdots \sum_{j_k=1}^{n_k}
  e_{j_1 \dots i+1 \dots j_k}.
  \]
The \textbf{zero weight space} of degree $d$ consists of the monomials of weight zero.
\end{definition}

\begin{definition}
Let $E = ( e_{i_1 i_2 \dots i_k} )$ be an array of size $n_1 \times n_2 \times \cdots \times n_k$
with non-negative integer entries.
A \textbf{slice} of $E$ is a $(k{-}1)$-dimensional subarray obtained by fixing one subscript;
for every $\ell = 1, 2, \dots, k$ we can set $i_\ell = 1, 2, \dots, n_\ell$
and obtain $n_\ell$ slices of size $n_1 \times \cdots \widehat{n_\ell} \cdots \times n_k$.
We call $E$ an \textbf{equal parallel slice (EPS) array} if
for every $\ell = 1, 2, \dots, k$ the $n_\ell$ slices in direction $\ell$ have the same entry sum.
That is, for each $\ell$ the following sum does not depend on $j$:
  \[
  \sum_{i_1=1}^{n_1} \cdots \widehat{\sum_{i_\ell}} \cdots \sum_{i_k=1}^{n_k}
  e_{i_1 \dots j \dots i_k}.
  \]
\end{definition}

\begin{lemma}
A basis for the zero weight space in degree $d$ consists of the monomials
whose arrays of exponents are EPS arrays.
\end{lemma}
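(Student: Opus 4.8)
The plan is to establish a bijection between a basis of the zero weight space in degree $d$ and the set of EPS arrays of size $n_1 \times \cdots \times n_k$ with entries summing to $d$. The natural map is already visible: each basis monomial $x_{1\dots1}^{e_{1\dots1}} \cdots x_{n_1\dots n_k}^{e_{n_1\dots n_k}}$ corresponds to its array of exponents $E = (e_{i_1\dots i_k})$, and by the preceding lemma (stating that a basis of $P_d$ consists of exactly these monomials) this correspondence between monomials of degree $d$ and nonnegative integer arrays summing to $d$ is a bijection. So the content of the lemma reduces to showing that a monomial lies in the zero weight space \emph{if and only if} its exponent array is an EPS array.

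First I would invoke Definition \ref{zerodefinition}, according to which a monomial has weight zero precisely when it has eigenvalue $0$ for every $H^{(\ell)}_i$ with $1 \le \ell \le k$ and $1 \le i \le n_\ell - 1$. By Lemma \ref{weightlemma}, the eigenvalue of $H^{(\ell)}_i$ on this monomial equals
\[
\sum_{j_1=1}^{n_1} \cdots \widehat{\sum_{j_\ell}} \cdots \sum_{j_k=1}^{n_k} e_{j_1\dots i\dots j_k}
\;-\;
\sum_{j_1=1}^{n_1} \cdots \widehat{\sum_{j_\ell}} \cdots \sum_{j_k=1}^{n_k} e_{j_1\dots i+1\dots j_k}.
\]
This is exactly the difference between the entry sum of the slice $i_\ell = i$ and that of the slice $i_\ell = i+1$ in direction $\ell$. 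Hence the eigenvalue vanishes if and only if consecutive slices in direction $\ell$ have equal entry sums.

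The one point requiring a short argument is the passage from ``consecutive slices have equal sums'' to ``\emph{all} slices have equal sums,'' which is what the EPS condition demands. This is where I would be most careful, though it is routine: for fixed $\ell$, the vanishing of every eigenvalue for $i = 1, 2, \dots, n_\ell - 1$ says that the slice sum in direction $\ell$ is the same for consecutive values of the index $i_\ell$, and by transitivity this common value is independent of $i_\ell$ ranging over all of $1, \dots, n_\ell$. Thus the zero weight condition over the full range of $i$ is equivalent to the defining EPS condition that the direction-$\ell$ slice sum does not depend on the chosen slice. Since this equivalence holds for each $\ell$ independently, a monomial lies in the zero weight space if and only if its exponent array satisfies the EPS condition in every direction simultaneously.

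Combining these observations, the restriction of the monomial basis of $P_d$ to the zero weight space is indexed precisely by the EPS arrays of degree $d$, which is the assertion of the lemma. I expect no genuine obstacle here: the work was already done in Lemma \ref{weightlemma}, and the remaining step is the elementary observation that a finite sequence with equal consecutive terms is constant.
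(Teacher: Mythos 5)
Your proof is correct and follows the only natural route: the paper states this lemma without proof, treating it as immediate from Definition of weight zero together with Lemma \ref{weightlemma} and the definition of an EPS array, which is exactly the comparison you carry out. Your explicit note that ``equal consecutive slice sums'' upgrades to ``all slice sums equal'' by transitivity is the one small point the paper leaves tacit, and you handle it correctly.
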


We write $W(d;a_1,\dots,a_{n-1})$ for the vector space with basis consisting of the monomials
with degree $d$ and eigenvalues $(a_1,\dots,a_{n-1})$ for $H_1, \dots, H_{n-1}$ as in Lemma \ref{weightlemma}.
In $\mathfrak{sl}_n(\mathbb{C})$ the brackets of $H_i$ and $E_j$ are given by the formulas
  \[
  [ H_i, E_j ]
  =
  \begin{cases}
  2 E_j &\text{if $i = j$} \\
  - E_j &\text{if $j = i-1$ or $j = i+1$} \\
  0 &\text{otherwise}.
  \end{cases}
  \]
It follows that the actions of $E_1, \dots, E_{n-1}$ induce the following linear maps:
  \begin{align*}
  E_1 \colon W(d;0,\dots,0) &\longrightarrow W(d;2,-1,0,\dots,0,0), 
  \\
  E_2 \colon W(d;0,\dots,0) &\longrightarrow W(d;-1,2,-1,\dots,0,0), 
  \\
  E_3 \colon W(d;0,\dots,0) &\longrightarrow W(d;0,-1,2,\dots,0,0), 
  \\  
  &\;\;\; \vdots
  \\
  E_{n-1} \colon W(d;0,\dots,0) &\longrightarrow W(d;0,0,0,\dots,-1,2).
  \end{align*}
The weights appearing on the right are the rows of the Killing-Cartan matrix,
  \[
  K^{(n-1)} = (\kappa_{ij} ),
  \qquad
  \kappa_{ij}
  =
  \begin{cases}
   2 &\text{if $i = j$} \\
  -1 &\text{if $j = i-1$ or $j = i+1$} \\
   0 &\text{otherwise}.
  \end{cases}
  \]
We write $w_1^{(n-1)}, \dots, w_{n-1}^{(n-1)}$ for the rows of $K^{(n-1)}$ and form the linear map
  \[
  E = (E_1,\dots,E_{n-1})\colon W(d;0,\dots,0) \longrightarrow \bigoplus_{i=1}^{n-1} W(d;w_i^{(n-1)}).
  \]  
We apply this to the semisimple Lie algebra \eqref{semisimple}.
We first combine the spaces $W(d;0,\dots,0)$ for each summand into the zero weight space
of Definition \eqref{zerodefinition}:
  \[
  Z
  =
  W(d;\overbrace{0,\dots,0}^{n_1-1})
  \, \cap \,
  \cdots
  \, \cap \,
  W(d;\overbrace{0,\dots,0}^{n_k-1}).  
  \]
We then combine the linear maps $E$ for each summand into the single linear map
  \begin{equation}
  \label{singlemap}
  \mathcal{E} 
  = 
  \big( E^{(n_1)}, \dots, E^{(n_k)} \big)
  \colon 
  Z 
  \longrightarrow
  \bigoplus_{\ell=1}^k
  \bigoplus_{i=1}^{n_\ell-1} 
  W\big( d; w_i^{(n_\ell-1)} \big).
  \end{equation}

\begin{theorem}
The invariant polynomials in degree $d$ for the $n_1 \times \cdots \times n_k$ array
$X = ( x_{i_1 \cdots i_k})$ 
are the (nonzero) elements of the kernel of the linear map \eqref{singlemap}.
\end{theorem}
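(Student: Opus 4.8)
The plan is to generalize the argument already used in the $2 \times 2 \times 2$ case, where invariance was characterized via the kernel of the operator $\mathcal{E}_d$. The key conceptual point is that the invariant polynomials are exactly the highest weight vectors of weight zero in the representation $P_d = S^d V$ of the semisimple Lie algebra $\bigoplus_{\ell=1}^k \mathfrak{sl}_{n_\ell}(\mathbb{C})$. For a semisimple Lie algebra, a vector is a trivial subrepresentation (that is, annihilated by the whole algebra) if and only if it has weight zero and is annihilated by all the positive simple root vectors. This is the representation-theoretic fact I would isolate first.

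First I would recall that $P_d$ is completely reducible, so it decomposes as a direct sum of irreducibles; for $\bigoplus_\ell \mathfrak{sl}_{n_\ell}(\mathbb{C})$ these irreducibles are outer tensor products of irreducibles of the simple summands. A polynomial $f \in P_d$ is invariant under $SL_{n_1}(\mathbb{C}) \times \cdots \times SL_{n_k}(\mathbb{C})$ precisely when it generates a copy of the trivial representation, which by the results already recalled is equivalent to $f$ being annihilated by the entire Lie algebra. Next I would reduce ``annihilated by everything'' to the finite set of conditions actually encoded in $\mathcal{E}$. Since the Cartan subalgebra is spanned by the $H^{(\ell)}_i$ and the whole algebra is generated by the $H^{(\ell)}_i$, $E^{(\ell)}_i$ together with the lowering operators $F^{(\ell)}_i$, it suffices to show: if $f$ lies in the zero weight space $Z$ (so all $H^{(\ell)}_i \cdot f = 0$) and all $E^{(\ell)}_i \cdot f = 0$, then all $F^{(\ell)}_i \cdot f = 0$ as well.

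That last implication is the crux and the step I expect to be the main obstacle to state cleanly. It follows from the $\mathfrak{sl}_2$-theory applied to each simple root copy $\langle E^{(\ell)}_i, H^{(\ell)}_i, F^{(\ell)}_i \rangle \cong \mathfrak{sl}_2(\mathbb{C})$: a weight-zero vector killed by the raising operator $E^{(\ell)}_i$ generates a lowest-dimensional (trivial) $\mathfrak{sl}_2$-string in that direction, hence is also killed by $F^{(\ell)}_i$. Running this over all $\ell$ and $i$ shows $f$ is annihilated by every $F^{(\ell)}_i$, and combined with the hypotheses by every generator, hence by all of $\bigoplus_\ell \mathfrak{sl}_{n_\ell}(\mathbb{C})$. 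Therefore the invariants are exactly the elements of $Z$ killed by all $E^{(\ell)}_i$, which is precisely the kernel of $\mathcal{E}$ as defined in equation \eqref{singlemap}.

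Conversely, any element of $\ker \mathcal{E}$ lies in $Z$ and is killed by each $E^{(\ell)}_i$, so by the same string argument it generates the trivial representation and is invariant. This establishes the equality between $\ker \mathcal{E}$ and the space of degree-$d$ invariants. I would take care to note that the direct-sum target of $\mathcal{E}$ collects exactly the weight spaces $W(d; w_i^{(n_\ell-1)})$ indexed by the simple roots, so that $\mathcal{E}(f) = 0$ is literally the conjunction of all the conditions $E^{(\ell)}_i \cdot f = 0$; no information is lost or double-counted. The only genuinely nontrivial ingredient is the passage from ``highest weight vector of weight zero'' to ``annihilated by the whole algebra,'' and this is a standard consequence of the representation theory of $\mathfrak{sl}_2(\mathbb{C})$ already summarized earlier in the paper.
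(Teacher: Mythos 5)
Your proposal is correct and follows exactly the route the paper intends: the paper states this theorem without a separate proof, relying on the chain of lemmas from Section 2 (invariant iff annihilated by the Lie algebra iff a weight-zero vector killed by all the raising operators), and your argument is a careful write-up of precisely that reasoning. The one step the paper leaves entirely implicit --- that a weight-zero vector annihilated by every simple raising operator $E^{(\ell)}_i$ is also annihilated by every $F^{(\ell)}_i$, via the $\mathfrak{sl}_2$-string argument applied to each simple root triple, and hence by the whole algebra since these triples generate it --- is exactly the right detail to supply, and you supply it correctly.
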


In degree $d$ there are no monomials of weight zero unless $d$ is a multiple
of $N = \mathrm{LCM}(n_1,\dots,n_k)$; hence invariants can only exist in degrees 
$d \equiv 0$ (mod $N$).

We expect that the dimension of the zero weight space in degree $d$ 
(equivalently, the number of EPS arrays with entry sum $d$) is a polynomial in $d$.
Since we have $n_1 \cdots n_k$ exponents, 
with  one constraint on the degree and $(n_1{-}1) + \cdots + (n_k{-}1)$ constraints on the parallel slices,
we make the following conjecture.

\begin{conjecture}
Let $k$ and $n_1, n_2, \dots, n_k$ be positive integers.
The dimension of the zero weight space in degree $d$ is given by a family of polynomials of degree
  \[
  \prod_{\ell=1}^k n_\ell
  -
  \sum_{\ell=1}^k n_\ell
  +
  k
  -
  1.
  \] 
\end{conjecture}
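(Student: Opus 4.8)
The plan is to recognize the counting problem as an instance of Ehrhart's theory of lattice points in dilated rational polytopes. Let $N = \prod_{\ell=1}^k n_\ell$, and regard an array $E = (e_{i_1 \cdots i_k})$ as a point of $\mathbb{R}^N$. Define the rational polytope
\[
P = \{\, E \in \mathbb{R}^N : E \ge 0,\; E \text{ is EPS},\; \textstyle\sum_{i_1,\dots,i_k} e_{i_1\cdots i_k} = 1 \,\},
\]
where the EPS condition is the system of homogeneous linear equations of Definition \ref{zerodefinition}. Since the EPS equations are homogeneous and scale with the total sum, the EPS arrays of degree $d$ are exactly the lattice points of the dilate $dP$, so the dimension of the zero weight space in degree $d$ equals the Ehrhart quasi-polynomial $L_P(d) = |dP \cap \mathbb{Z}^N|$. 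By Ehrhart's theorem for rational polytopes, $L_P$ is a quasi-polynomial in $d$: there is a period $p$ so that on each residue class modulo $p$ the function $L_P$ agrees with a polynomial, every such polynomial has degree at most $\dim P$, and on each class for which $dP$ eventually contains lattice points the leading coefficient equals the relative volume of $P$ and the degree is exactly $\dim P$. This already accounts for the ``family of polynomials'' in the statement, and reduces the problem to the single computation $\dim P = N - \sum_\ell n_\ell + k - 1$.

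To compute $\dim P$, I would first note that $P$ is nonempty and full-dimensional inside its affine hull: the uniform array $e_{i_1\cdots i_k} = 1/N$ is strictly positive and EPS, hence an interior point, so the relative volume is positive and $\dim P = N - \mathrm{rank}\,A$, where $A$ is the matrix of the defining equalities. The equalities are the single total-sum functional together with, in each direction $\ell$, the $n_\ell - 1$ differences of consecutive parallel-slice sums. Writing $(\mathbb{R}^N)^\ast \cong \bigotimes_{\ell=1}^k (\mathbb{R}^{n_\ell})^\ast$ and splitting each factor as $\mathbb{R}\mathbf{1}^\ast \oplus Z_\ell$, where $Z_\ell$ is the codimension-one space of coordinate-sum-zero functionals, the total functional is $\mathbf{1}^\ast \otimes \cdots \otimes \mathbf{1}^\ast$ while each slice-difference in direction $\ell$ is $\mathbf{1}^\ast \otimes \cdots \otimes (\delta^\ast_j - \delta^\ast_{j+1}) \otimes \cdots \otimes \mathbf{1}^\ast$ with the nontrivial factor in slot $\ell$. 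These functionals therefore lie in distinct summands of the decomposition $\bigoplus_{T \subseteq \{1,\dots,k\}} \big(\bigotimes_{\ell \in T} Z_\ell\big) \otimes \big(\bigotimes_{\ell \notin T} \mathbb{R}\mathbf{1}^\ast\big)$ — the total functional in the $T = \emptyset$ summand and the direction-$\ell$ differences in the $T = \{\ell\}$ summand — and within the $\ell$-th summand the $n_\ell - 1$ differences $\delta^\ast_j - \delta^\ast_{j+1}$ form a basis of $Z_\ell$. Hence all $1 + \sum_\ell (n_\ell - 1) = \sum_\ell n_\ell - k + 1$ functionals are linearly independent, so $\mathrm{rank}\,A = \sum_\ell n_\ell - k + 1$ and
\[
\dim P = N - \Big(\sum_{\ell=1}^k n_\ell - k + 1\Big) = \prod_{\ell=1}^k n_\ell - \sum_{\ell=1}^k n_\ell + k - 1,
\]
which is the asserted degree.

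The routine parts here are the Ehrhart black box and the tensor bookkeeping; the point that requires care is the interface between them. The counting function genuinely vanishes on some residue classes — for instance whenever $\mathrm{LCM}(n_1,\dots,n_k)$ does not divide $d$, since then the slice sums $d/n_\ell$ cannot be integers — so on such a class the constituent is the zero polynomial and does \emph{not} have degree $\dim P$. I expect the main obstacle to be stating the conclusion correctly: one must restrict the claim to the residue classes on which $L_P$ is not eventually zero, and invoke Ehrhart's theorem in the sharp form that pins the leading coefficient to the positive relative volume on precisely those classes, thereby guaranteeing that each surviving constituent has degree exactly $\prod_\ell n_\ell - \sum_\ell n_\ell + k - 1$. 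Determining which classes survive — equivalently, identifying the period $p$, which for $2 \times 2 \times 2$ is $4$ rather than the naive $\mathrm{LCM} = 2$ — is not needed for the degree statement but is the natural next refinement.
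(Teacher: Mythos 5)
The paper offers no proof of this statement at all: it is stated as a conjecture, supported only by the informal remark that there are $\prod_\ell n_\ell$ exponents subject to one degree constraint and $\sum_\ell (n_\ell - 1)$ slice constraints. Your Ehrhart-theoretic argument upgrades that heuristic to an actual proof, and as far as I can see it is correct. The two substantive things you add beyond the paper's parameter count are exactly the ones that make the heuristic rigorous: the verification that the $1 + \sum_\ell (n_\ell - 1)$ equality constraints are genuinely independent (your tensor decomposition into the summands indexed by $T = \emptyset$ and $T = \{\ell\}$ is clean and correct, since the $\delta^\ast_j - \delta^\ast_{j+1}$ do form a basis of $Z_\ell$), and the exhibition of the uniform array $e_{i_1\cdots i_k} = 1/N$ as a relative interior point, which guarantees both that $\dim P$ equals $N$ minus the rank and that the relative volume is positive so the surviving constituents have degree exactly $\dim P$ rather than merely at most $\dim P$. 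Your interpretation of ``family of polynomials'' as the constituents of the Ehrhart quasi-polynomial matches the paper's own usage: in the $2\times2\times2$ case Theorem \ref{theoremdimensions} exhibits distinct degree-$4$ constituents for $d \equiv 0$ and $d \equiv 2 \pmod 4$ and the zero constituent for odd $d$, consistent with your degree $8 - 6 + 3 - 1 = 4$, and the $k=2$, $n_1 = n_2 = n$ case recovers the classical degree $(n-1)^2$ for the Birkhoff polytope. You are also right to flag the one point requiring care: the conjecture as literally stated must be read as applying only to the residue classes on which the count is not eventually zero, and the form of Ehrhart's theorem you invoke must be the one that ties the leading coefficient of each surviving constituent to the positive relative volume (which follows from your interior point, since $dP$ then contains lattice points of the translated lattice in its affine hull for all large $d$ in such a class). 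With that caveat made explicit, this settles the conjecture.
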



\section{Conclusion} \label{sectionconclusion}

Modern interest in Cayley's hyperdeterminant and its generalizations 
was revived by the famous paper of Gelfand, Kapranov and Zelevinsky \cite{GKZpaper};
see especially Proposition 1.9 on page 234.
The same authors developed this subject in great depth, using the techniques of algebraic geometry,
in their monograph \cite{GKZbook}.

A closely related topic, of great importance in applied numerical linear algebra,
is the problem of computing the rank of a $k$-dimensional array.
When $k = 2$, this problem has an efficient solution using Gaussian elimination, 
but for $k \ge 3$ it has been shown by Hastad \cite{Hastad} to be NP-complete.
A comprehensize survey on tensor rank and algorithms for tensor decomposition
has been given recently by Kolda and Bader \cite{KoldaBader}.
Cayley's hyperdeterminant was rediscovered in the 1970's by Kruskal \cite{Kruskal},
and is sometimes called Kruskal's polynomial by applied mathematicians;
see ten Berge \cite{tenBerge} and Martin \cite{Martin} for an explanation
of how it can be used to compute the rank of a $2 \times 2 \times 2$ array.
Two recent related papers are de Silva and Lim
\cite{deSilvaLim} and Stegeman and Comon \cite{StegemanComon}.

Invariant polynomials on arrays of size $2 \times 2 \times \cdots \times 2$
($k$ factors) have been studied by theoretical physicists working on quantum 
computing; see Luque and Thibon \cite{LuqueThibon1,LuqueThibon2}, Djokovic and Osterloh \cite{Djokovic}.
For the combinatorial-geometric aspects of this problem, see Huggins et al.~\cite{Huggins}.
These invariants can be regarded as noncommutative analogues of classical invariant theory
(for a survey see Dixmier \cite{Dixmier}):
the 19th century invariant theorists studied
the irreducible representations $V(d) \cong S^k V(1)$ of $\mathfrak{sl}_2(\mathbb{C})$,
and replacing the symmetric power by the full tensor power gives the vector space of arrays of size $2^k$.
It is an open problem to extend the methods of the present paper to these arrays.
It would be very useful to have a complete description of the structure of the space
of homogeneous polynomials as a sum of irreducible representations of the semisimple Lie
algebra; one possible approach to this problem has been developed by Adsul and Subrahmanyam
\cite{AdsulSubrahmanyam}.

The objects that we call equal parallel slice (EPS) arrays are examples of contingency tables, 
which are important in combinatorics and statistics.
For asymptotic formulas for the enumeration of these objects,
see Barvinok \cite{Barvinok}.
In closing, we mention the intriguing applications of Gr\"obner bases and hyperdeterminants
to mathematical genetics; see Allman and Rhodes \cite{AllmanRhodes}, especially page 146.


\section*{Acknowledgements}

Murray Bremner was partially supported by a Discovery Grant from NSERC.
The authors thank
Tobias Pecher for reference \cite{AdsulSubrahmanyam},
Richard Brualdi for reference \cite{Barvinok}, and
Andrew Douglas for reference \cite{NSS}.



\begin{thebibliography}{99}

\bibitem{AdsulSubrahmanyam} \textsc{B. Adsul, K. V. Subrahmanyam}: A geometric
    approach to the Kronecker problem. I. The two row case. \emph{Proceedings
    of the Indian Academy of Sciences: Mathematical Sciences} 118 (2008),
    no.~2, 213--226.

\bibitem{AllmanRhodes} \textsc{E. S. Allman, J. A. Rhodes}: Phylogenetic
    invariants for stationary base composition. \emph{Journal of Symbolic
    Computation} 41 (2006), no.~2, 138--150.

\bibitem{Barvinok} \textsc{A. Barvinok}: Brunn-Minkowski inequalities for
    contingency tables and integer flows. \emph{Advances in Mathematics} 211
    (2007), no.~1, 105--122.

\bibitem{Cayley} \textsc{A. Cayley}: On the theory of linear transformations.
    \emph{Cambridge Mathematical Journal} 4 (1845) 193--209.
    \texttt{<www.archive.org/details/collectedmathema01cayluoft>}

\bibitem{deGraaf}
\textsc{W. A. de Graaf}:
\emph{Lie Algebras: Theory and Algorithms}.
North-Holland, Amsterdam, 2000.

\bibitem{deSilvaLim} \textsc{V. de Silva, L.-H. Lim}: Tensor rank and the
    ill-posedness of the best low-rank approximation problem. \emph{SIAM
    Journal on Matrix Analysis and its Applications} 30 (2008), no.~3,
    1084--1127.

\bibitem{Dixmier}
  \textsc{J. Dixmier}:
  Quelques aspects de la th\'eorie des invariants. 
  \emph{Gazette des Math\'ematiciens}  
  43 (1990) 39--64. 

\bibitem{Djokovic} \textsc{D. Z. Djokovic, A. Osterloh}: On polynomial
    invariants of several qubits. \emph{Journal of Mathematical Physics} 50
    (2009), no.~3, 033509, 23 pages.

\bibitem{ErdmannWildon} \textsc{K. Erdmann, M. J. Wildon}:
\emph{Introduction to Lie Algebras}.
Springer-Verlag, London, 2006.

\bibitem{GKZpaper} \textsc{I. M. Gelfand, M. M. Kapranov, A. V. Zelevinsky}:
    Hyperdeterminants. \emph{Advances in Mathematics} 96 (1992), no.~2,
    226--263.

\bibitem{GKZbook} \textsc{I. M. Gelfand, M. M. Kapranov, A. V. Zelevinsky}:
    \emph{Discriminants, Resultants, and Multidimensional Determinants}.
    Birkh\"auser Boston, 1994.
    
\bibitem{Hastad}
\textsc{J. Hastad}:
Tensor rank is NP-complete.
\emph{Journal of Algorithms} 11 (1990), no.~4, 644--654.

\bibitem{Huggins} \textsc{P. Huggins, B. Sturmfels, J. Yu, D. S. Yuster}: The
    hyperdeterminant and triangulations of the 4-cube. \emph{Mathematics of
    Computation} 77 (2008), no.~263, 1653--1679.
    
    
\bibitem{Humphreys}
\textsc{J. E. Humphreys}:
\emph{Introduction to Lie Algebras and Representation Theory.}
Springer, New York, 1972.

\bibitem{Jacobson}
\textsc{N. Jacobson}:
\emph{Lie Algebras}.
Interscience Publishers, New York, 1962. 



\bibitem{KoldaBader} \textsc{T. G. Kolda, B. W. Bader}: Tensor decompositions
    and applications. \emph{SIAM Review} 51 (2009), no.~3, 455--500.

\bibitem{Kruskal} \textsc{J. B. Kruskal}: Three-way arrays: rank and uniqueness
    of trilinear decompositions, with application to arithmetic complexity and
    statistics. \emph{Linear Algebra and its Applications} 18 (1977), no.~2,
    95--138.

\bibitem{LuqueThibon1} \textsc{J.-G. Luque, J.-Y. Thibon}: Polynomial
    invariants of four qubits. \emph{Physical Review A} (3) 67 (2003), no.~4,
    042303, 5 pages.

\bibitem{LuqueThibon2} \textsc{J.-G. Luque, J.-Y. Thibon}: Algebraic invariants
    of five qubits. \emph{Journal of Physics A: Mathematical and Theoretical}
    39 (2006), no.~2, 371--377.
    
\bibitem{Martin}
\textsc{C. D. Martin}: 
The rank of a $2 \times 2 \times 2$ tensor.
\emph{Linear and Multilinear Algebra} (to appear).
Published online 1 January 2011.
\texttt{<DOI:10.1080/03081087.2010.538923>}

\bibitem{NSS} \textsc{E. Neher, A. Savage, P. Senesi}: Irreducible
    finite-dimensional representations of equivariant map algebras.
    \texttt{<arXiv:0906.5189v3 [math.RT]>}

\bibitem{Procesi} \textsc{C. Procesi}: \emph{Lie Groups: An Approach through
    Invariants and Representations}. Springer, New York, 2007.

\bibitem{StegemanComon}
\textsc{A. Stegeman, P. Comon}:
Subtracting a best rank-1 approximation may increase tensor rank.
\emph{Linear Algebra and its Applications} 
433 (2010) 1276--1300.

\bibitem{Stillwell} \textsc{J. Stillwell}: \emph{Naive Lie Theory}. Undergraduate
    Texts in Mathematics. Springer New York, 2008.

\bibitem{tenBerge} \textsc{J. M F. ten Berge}: Kruskal's polynomial for $2
    \times 2 \times 2$ arrays and a generalization to $2 \times n \times n$
    arrays. \emph{Psychometrika} 56 (1991), no.~4, 631--636.

\end{thebibliography}
\end{document}